\newcommand{\clg}[1]{{\mathcal{#1}}}
\newcommand{\ol}[1]{{\overline{#1}}}
\newcommand{\R}{\mathbb R}
\newcommand{\N}{\mathbb N}
\newcommand{\vp}{\varphi}
\newcommand{\ess}{\,{\rm ess}}
\newcommand{\dive}{{\rm div}}
\numberwithin{equation}{section}
\newtheorem{theorem}{Theorem}[section]
\newtheorem{proposition}{Proposition}[section]
\newtheorem{remark}{Remark}[section]
\newtheorem{lemma}{Lemma}[section]
\newtheorem{corollary}{Corollary}[section]
\newtheorem{definition}{Definition}[section]
\begin{document}
\title{INITIAL BOUNDARY VALUE PROBLEM FOR ANISOTROPIC
FRACTIONAL TYPE DEGENERATE PARABOLIC EQUATION}
\author{Gerardo Huaroto}
%\address{\emph{Instituto de Matem\'{a}tica, Universidade Federal de Alagoas\\ Macei\'o AL-Brazil}}
%\email{Gerardo@.....}
\date{}

\maketitle

\footnotetext[1]{ Universidade Federal
de Alagoas. E-mail: {\sl gerardo.cardenas@im.ufal.br}.
\textit{Key words and phrases. Fractional Elliptic Operator, Initial-boundary value problem, Dirichlet homogeneous boundary condition, Anisotropic problem.}}

%%%%%%%%%%%%%%%%%%%%%%%%%%%%%%%%%%%%%%%%%%%%%%%%%%%%%%%%%%%%%%%%%%%%%%%%%%%%
\begin{abstract}
The aim of the paper is to generalize the author's previous work \cite{Hua}. We extend the argument \cite{Hua}  for any uniformly elliptic operator in divergence form $\mathcal{L}u=-\dive(A(x)\nabla u)$, more precisely, we study  a fractional type degenerate elliptic equation posed in bounded domains with homogeneous boundary conditions
$$
\partial_t u=\dive(u\,A(x)\nabla \mathcal{L}^{-s}u)
$$ 
where $\mathcal{L}^{-s}$ is the inverse $s$-fractional elliptic operator for any $s \in (0,1)$. This work consists of two part. The first part is devoted to state how the boundary condition will be consider (in the spirit of F. Otto \cite{Otto}), and to give a formulation for the IBVP. In the second part, It is shown the existence of mass-preserving, non-negative weak solutions satisfying energy estimates for measurable and bounded non-negative initial data. 
\end{abstract}
%%%%%%%%%%%%%%%%%%%%%%%%%%%%%%%%%%%%%%%%%%%%%%%%%%%%%%%%%%%%%%%%%%%%%%%%%%%%

\maketitle

%%%%%%%%%%%%%%%%%%%%%%%%%
\section{Introduction}
\label{INTRO}
%%%%%%%%%%%%%%%%%%%%%%%%%
The aim of this paper is study the existence of solution of \eqref{FTPME}. More precisely, 
to state how the boundary conditions will be consider, and to express in a convenient way the concept of solution for the following problem
% in what sense the solution of \eqref{FTPME} is understood,
%  and to show the existence of solutions of the following problem
\begin{equation}
\label{FTPME} 
     \left \{
       \begin{aligned}
       \partial_t u&= \dive (u\, A(x) \, \nabla \clg{K} u) \quad \text{in $\Omega_T$},
       \\
       u|_{t= 0} &= u_0 \hspace{2.6cm}\quad \text{in $\Omega$},
       \\
       u &= 0 \hspace{2,8cm}\quad \text{on $(0,T) \times \partial \Omega$},
\end{aligned}
\right.
\end{equation}
where $\Omega_T:= (0,T) \times \Omega$, for any real number $T> 0$, and
$\Omega \subset \R^n$ $(n\geq1)$ is a bounded open set having smooth ($C^2$)
boundary $\partial \Omega$.
%Here, $u(t,x)$ is an unknown real function, which can be physical, an absolute temperature, or a density, also a concentration, thus non-negative. 
%
Moreover, the initial data $u_0$ is a measurable, bounded 
non-negative function in $\Omega$, and considered homogeneous 
Dirichlet boundary condition, while $\clg{K}:=\mathcal{L}^{-s}$, is the inverse of the $s$-fractional elliptic operator  (see Definition \ref{DSFE}), and the matrix $A(x)=(a_{ij}(x))_{n\times n}$ satisfy the uniform ellipticity condition. 

\smallskip
The nonlocal, possibly degenerate, parabolic type equation is inspired in a non-local Fourier's law, that is
$$
   \mathbf{q}:= -\kappa(x,u) \ \nabla \clg{K} u,
$$
where $u$ is the temperature, $\mathbf{q}$ is the diffusive flux, and $\kappa(x,u)$ denotes here
the (non-negative definite) thermal conductivity tensor. 
%For instance, if we suppose  $\kappa(x,u)=A(x)$, we obtain
%$$
%   \partial_t u=\dive(A(x)\nabla \mathcal{K}u),
%$$
%which is the fractional version (written in the divergence form) 
%of the standard parabolic equation. 

\smallskip
Equation \eqref{FTPME} is motivated  in the so-called Caffarelli-Vazquez model of a porous media (degenerate)  diffusion model given by a fractional potential pressure law
\cite{Caffa}. Under some conditions, they  found
mass-preserving, nonnegative weak solutions of the equation satisfying energy estimates for the Cauchy problem. Moreover,  Caffarelli, Soria and Vazquez establish the H\"older regularity of such weak solutions for the case $s\neq1/2$ in \cite{CSV} and the case  $s=1/2$ has been proved in \cite{CV2} by Caffarelli and Vazquez.

\smallskip
A similar model was introduced at the same time  by Biler, Imbert, Karch  and Monneau (see \cite{BIK} \cite{BKM} and \cite{I}). A different approach to prove existence based on gradient flows has been developed by Lisini, Mainini and Segatti (see \cite{LMS}). Then the model has been generalized in \cite{STV1} \cite{STV2} \cite{STV3} \cite{STV4} \cite{STV5}. Uniqueness is still open in general, but under some truly restrictive regularity assumption is proven in \cite{ZXC}. 

\smallskip
On bounded domain, the Caffarelli-Vazquez model was studied by myself and Neves in \cite{Hua}. The main novelty of this work was to state how the boundary condition is considered. For $\frac{1}{2}<s<1$, the boundary condition is assumed in the sense of trace, and for $0<s\leq \frac{1}{2}$, we inspired in the definition of weak solutions for scalar conservation laws posed in bounded domains as proposed by Otto \cite{Otto} (see also \cite{Malek}, \cite{WN1}).

\smallskip
In another context, Nguyen and Vazquez \cite{NV} studied a similar model with a different approach in the definition of weak solution. Moreover they proved existence and smoothing effects.

\smallskip
In this paper, we focus in the (simplest) anisotropic degenerate case, that is, $\kappa(x,u)= u \ A(x)$, where the coefficients $(a_{ij})$, $i,j=1,\cdots,n$ describing the anisotropic, heterogeneous nature of the medium. 

%\smallskip
%In Section \ref{BN} we summarize without proofs the relevant material on Fractional Elliptic Operator in bounded domain .
% More precisely, we use the Dirichlet Spectral Fractional Elliptic (DSFE for short),
%which is denoted by $\mathcal{L}^s$ and defined as follows
%$$
%\mathcal{L}^{s}u(x):= \sum_{k=1}^\infty \lambda^s_k \ u_k \ \varphi_k(x), \quad u_k= \int_\Omega u(x) \varphi_k(x) dx, 
%$$ 
%where $\lambda_k> 0$, $k=1,2,\cdots,$ are the eigenvalues of the Elliptic operator $\mathcal{L}$ in $H^1_0(\Omega)$,
%and $\varphi_k$ are the corresponding normalized eigenfunctions.  

\smallskip
The main goal of this work is to state how boundary condition will be considered. In order to treat this part of the boundary, we follow an approach inspired by F. Otto \cite{Otto}. In method we propose, the boundary conditions, written as limits of integrals on $(0,T)\times\partial\Omega$ of a certain function. To this purpose, it is introduced a function $\Psi:[0,1]\times\partial\Omega\to\overline{\Omega}$ called $C^1$- admissible deformation (see Section \ref{AdDeform}). 

\smallskip
A simple explanation to use  the $C^1$-map $\Psi$ is the following. Consider the equation $\dive(uA(x)\nabla\mathcal{K}u)=0$ in $\Omega$, and $u=0$ on $\partial\Omega$. Multiply it by $\phi\in C^\infty_c(\R^n)$, integrate by part,  and from the boundary condition, we expect that
\begin{equation}
\int_{\partial\Omega}u(r)A(r)\nabla\mathcal{K}u(r)\cdot\nu(r)\phi(r)dr=0\label{motiv}
\end{equation}
where $\nu$ is the unit outward normal field on $\partial\Omega$. 
Notice that the existence of trace for $u$  does not necessary exist in the sense of traces in $H^s(\Omega)$. Moreover the trace for $u$ and $\nabla \mathcal{K}u\cdot\nu$ are mutually exclusive (see Remark \ref{REMMEX}). Then \eqref{motiv} is not well defined, to avoid this difficulty, it will be considered a simple modification, as follows
\begin{equation}
\ess\lim_{\tau \to 0^+}\int_{\partial\Omega_\tau} u(r) \ A(r)
\nabla \mathcal{K}u(r) \cdot \nu_\tau(r) \ \phi(\Psi_\tau^{-1}(r)) \,dr= 0,\nonumber
\end{equation}
where $\Psi_\tau(r)$ is a $C^1$-deformation, and $\nu_\tau$ is the unit outward normal field on $\partial \Omega_\tau=\Psi_\tau(\partial\Omega)$ (see Section \ref{AdDeform}).

\smallskip
On the other hand, we also show an equivalent definition of (weak) solutions as given by Definition \ref{DEFSOL}, more precisely an integral equivalent definition (see the Equivalence Theorem \ref{THEQUIVA}).

\medskip
After introducing the definition of weak solution to above problem, we study of existence of  solution in the proposed setting. We prove that the weak solution previously defined can be obtained as the limit of solution of regularized equation \eqref{FTPME}, to prove that we use energy estimates and apply the Aubin-Lions Compacteness Theorem.

\smallskip
On the other hand, an important talk is about the 
non-homogeneous Dirichlet boundary conditions.  
First, if a given boundary data $u_b\neq0$ is smooth enough to be considered as the restriction (in the sense trace in $H^s(\Omega)$) of a function  $u_b$ defined in $\Omega_T$ , then the strategy developed here follows right way with standard modifications. After that, some forcing terms appears, one of them is 
\begin{equation}
\dive(u_bA(x)\nabla\mathcal{K}u_b),\label{eq:ub}
\end{equation} 
thus to make sense \eqref{eq:ub}, it is necessary that $u_b\in D\big(\mathcal{L}^{(1-s)/2}\big)$, (see Definition \ref{DEFSOL}), 
but this is not necessary true, since $u_b\neq0$ on the boundary (see the counterexample in \cite{HAJPSR}). To avoid this difficulty, it have to use  
the fractional operators with inhomogeneous boundary conditions as defined in \cite{HAJPSR}. 

\smallskip
Finally, we stress that the uniqueness property is not established in this paper. In fact, it seems to be open even if for the Cauchy problem. Somehow, the ideas from scalar conservation laws could be useful, more precise, the doubling of variables of Kru\v zkov \cite{K}.

%%%%%%%%%%%%%%%%%%%%%%%%%%%
\section{Preliminaries} 
\label{BN}
%%%%%%%%%%%%%%%%%%%%%%%%%%%
In this section, we review some  results of Dirichlet spectral fractional elliptic (DSFE for short) and admissible deformation.  We mainly provide the proofs of the new results, in particular we stress Proposition \ref{proKuHu}. One can refer to \cite{MSireVazquez}, \cite{LCPRS}, and \cite{Hua}  for an introduction.

%%%%%%%%%%%%%%%%%%%%%%
\medskip
Let $\Omega$ be a bounded open set in $\R^n$. We denote by $\clg{H}^\theta$ the $\theta$-dimensional Hausdorff measure, and $\big(L^2(\Omega)\big)^n$  is the Cartesian  product of $L^2(\Omega)$ $n$-times.

\subsection{Dirichlet Spectral Fractional Elliptic}
%%%%%%%%%%%%%%%%%%%%%%%%%%%%%%%
Here and subsequently, $\Omega \subset \R^n$ is a bounded open set with $C^2$-boundary $\partial \Omega$. 
We are mostly interested in fractional powers of a strictly positive self-adjoint 
operator defined in a domain, which is dense in a (separable) Hilbert space. Therefore,
we are going to consider hereupon the operator $\mathcal{L}u:=-\dive(A(x)\nabla u)$ with homogeneous Dirichlet data, where $A(x)=(a_{ij}(x))_{n\times n}$ is a matrix, such that $a_{ij}\in C^\infty(\bar{\Omega})$ ($i,j=1,\cdots,n$) and satisfy the uniform elliptic condition
\begin{equation}
\Lambda_1|\xi|^2\leq \sum_{i,j=1}^na_{ij}(x)\xi_i\xi_j\leq \Lambda_2|\xi|^2, \label{unifellip}
\end{equation}
for all $\xi\in\R^n$ and a.e. $x\in\Omega$, for some ellipticity constant $0<\Lambda_1\leq\Lambda_2$. Moreover, the coefficients  are symmetric $a_{ij}(x)=a_{ji}(x),\,i,j=1,\cdots,n$,  bounded and measurable in $\Omega$.

\medskip
Due to well-known the elliptic operator $\mathcal{L}$  is nonnegative and selfadjoint in $H^1_0(\Omega)$, therefore from spectral theory, there exists a complete orthonormal 
basis $\{\vp_k\}^{\infty}_{k=1}$ of $L^2(\Omega)$,  where $\vp_k \in H_0^1(\Omega)$
are eigenfunction corresponding to eigenvalue $\lambda_k$ for each $k\geq 1$, moreover
$$
    0< \lambda_1<\lambda_2\leq\lambda_3\leq \cdots \leq \lambda_k  \leq \cdots, \quad \text{$\lambda_k \rightarrow \infty$
     as $k \longrightarrow \infty$}.
$$
Therefore the operator $\mathcal{L}$ and its the domain $D(\mathcal{L})$  could be rewrite as follow
$$
\begin{aligned}
   D(\mathcal{L})&= \left\lbrace u \in L^2(\Omega); \ \sum_{k= 1}^\infty \lambda_k^2 \, |\langle u, \vp_k \rangle|^2 < \infty \right\rbrace, 
   \\
   \mathcal{L} u&= \sum_{k= 1}^\infty \lambda_k \, \langle u, \vp_k \rangle \ \vp_k, 
   \quad \text{for each  $u \in D(\mathcal{L})$}.
\end{aligned}
$$
\begin{remark}
Since $\partial\Omega$ is $C^2$, it follows that $\varphi_k\in C^{\infty}(\Omega)\cap C^2(\ol{\Omega})$,
(see \cite{DGNST}, p. 214)
 and $D(\mathcal{L})= H^2(\Omega) \cap H^1_0(\Omega)$
(see \cite{DGNST}, p. 186)
. The former property, that is the regularity of the eigenfunctions $\varphi_k$, help us to study the regularized problem \eqref{FTPME} and the second property is important in Proposition \ref{represendoDominio}.
%in particular item $(ii)$.
\end{remark}

\medskip
Now, from functional calculus, we have the following definition
\begin{definition}[DSFE]\label{DSFE}
Let $\Omega \subset \R^n$ is a bounded open set with $C^2$-boundary $\partial \Omega$. Consider the operator $\mathcal{L}u:=-\dive(A(x)\nabla u)$ with homogeneous Dirichlet data, where $A(x)=(a_{ij}(x))_{n\times n}$ is a symmetric matrix, such that $a_{ij}\in C^\infty(\bar{\Omega})$ 
$(i,j=1,\cdots,n)$ and satisfy the condition \eqref{unifellip}. For each $s \in (0,1)$, the DSFE $\mathcal{L}^{s}: D \big(\mathcal{L}^{s}\big) \subset L^2(\Omega) \to L^2(\Omega)$,  is defined as follow 
\begin{equation}
\label{DEFFRALAP}
\begin{aligned}
    \mathcal{L}^s u:&= \sum_{k= 1}^\infty \lambda^s_k \, \langle u, \vp_k \rangle \ \vp_k, 
\\
    D\big(\mathcal{L}^s\big)&= \left\lbrace u \in L^2(\Omega) : \; \sum^{\infty}_{k=1}
    \lambda^{2 s}_k \, \vert \langle u, \vp_k \rangle \vert^2<+\infty  \right\rbrace.
\end{aligned}
\end{equation}
Analogously, we can also define $\mathcal{L}^{-s}: D\big(\mathcal{L}^{-s}\big)\subset L^2(\Omega) \to L^2(\Omega)$ for $s \in (0,1)$.

\end{definition}
The next proposition generalize some properties of the $s$-fractional Laplacian in bounded domain. 
In particular, we observe that $D\big(\mathcal{L}^{-s}\big)= L^2(\Omega)$.
\begin{proposition}
\label{THMCARAC}
Let $\Omega \subset \R^n$ be a bounded open set, $s \in (0,1)$, and consider $\mathcal{L}^s$, and $\mathcal{L}^{-s}$ the 
operators defined above. Then, we have:

\begin{enumerate}
\item[$(1)$] $D\big(\mathcal{L}\big) \subset D\big(\mathcal{L}^{s}\big)$, thus $D\big(\mathcal{L}^{s}\big)$ is dense in $L^2(\Omega)$.

\item[$(2)$] For all  $u \in D\big(\mathcal{L}^{s}\big)$, there exists $\alpha> 0$ which is the coercivity constant of $\mathcal{L}$ and satisfies
\begin{equation}
   \langle \mathcal{L}^s u, u \rangle \geq \alpha^s \Vert u\Vert^2_{L^2(\Omega)}. \label{coercivity}
\end{equation}
Moreover, it follows that $(\mathcal{L}^s)^{-1}= \mathcal{L}^{-s}$, also
% \in \clg{L}(L^2(\Omega))$
 $\mathcal{L}^s$ and $\mathcal{L}^{-s}$ are self-adjoint.

\item[$(3)$] $D\big(\mathcal{L}^s\big)$ endowed with the inner product
\begin{equation}
      \langle u, v \rangle_{s}:=\langle u, v\rangle+\int_{\Omega} \mathcal{L}^{s} u(x) \; \mathcal{L}^{s} v(x) \, dx \nonumber
\end{equation}
is a Hilbert space. In particular the norm $\vert \cdot \vert_s$ is defined by
\begin{equation}
\label{eq:norm}
       \vert u \vert_s^2=\Vert u \Vert^2_{L^2(\Omega)}+\Vert \mathcal{L}^su \Vert^2_{L^2(\Omega)}.
\end{equation}

%\item[$(4)$]  For each $u \in D\big(\mathcal{L}^{s_2}\big)$ and $ 0 < s_1\leq s_2 < 1$, we have
%\begin{equation}
%    \Vert \mathcal{L}^{s_1} u \Vert^{s_2}_{L^2(\Omega)} \leq  \Vert \mathcal{L}^{s_2} u\Vert^{s_1}_{L^2(\Omega)} 
%    \, \Vert u\Vert^{s_2-s_1}_{L^2(\Omega)}.\label{As1s2}
%\end{equation}
%
%\item[$(5)$] If $u \in D\big(\mathcal{L}\big)$, then 
%\begin{equation}
%\lim_{s \to 0^+} \mathcal{L}^su= u, \lim_{s \to 1^-} \mathcal{L}^su= \mathcal{L}u %\quad \text{in $L^2(\Omega)$.}\nonumber
%\end{equation}
%Furthermore, \eqref{As1s2} holds for $s_1= 0$, and $s_2= 1$.
%
%\item[$(6)$] For any $\lambda> 0$, and $s \in [0,1]$, 
%the operator $I + \lambda \mathcal{L}^s$ is bijective. 
%Moreover, for any $v \in L^2(\Omega)$, the family 
%$\{v_\lambda\}$, $v_\lambda \in D(\mathcal{L}^s)$ defined by 
%$$
%    v_{\lambda}:=(I+\lambda \mathcal{L}^s)^{-1} v
%$$
%converges to  $v$ in $L^2(\Omega)$ as $\lambda \to 0$.
%
%\item[$(7)$] If $0\leq s_1 < s_2\leq 1$, then  
%$$
%   D\big(\mathcal{L}^{s_2}\big)\hookrightarrow D\big(\mathcal{L}^{s_1}\big), \quad %\text{and $ D\big(\mathcal{L}^{s_2}\big)$ is dense in  
%   $D\big(\mathcal{L}^{s_1}\big)$}. 
%$$
\end{enumerate}
\end{proposition}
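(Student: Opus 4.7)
My plan is to derive the three items directly from the spectral definition \eqref{DEFFRALAP} by manipulating Fourier series in the orthonormal basis $\{\vp_k\}$ of $L^2(\Omega)$. The crucial structural facts are $0 < \la_1 \leq \la_k$ for all $k$, $\la_k \to \infty$, and consequently that only finitely many $\la_k$ fall below any fixed threshold.

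For item (1), given $u \in D(\mathcal{L})$, I would split $\N$ into the finite block $\{k : \la_k \leq 1\}$ and the tail $\{k : \la_k > 1\}$. On the finite block the partial sum of $\la_k^{2s}|\langle u, \vp_k\rangle|^2$ is trivially finite; on the tail, $s \in (0,1)$ gives $\la_k^{2s} \leq \la_k^2$, and that part is controlled by $\sum_k \la_k^{2}|\langle u, \vp_k\rangle|^2 < \infty$. Hence $u \in D(\mathcal{L}^s)$; density of $D(\mathcal{L}^s)$ in $L^2(\Omega)$ then follows because $D(\mathcal{L})$ is itself dense.

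For item (2), I would first note that the Rayleigh-quotient characterization identifies $\la_1$ as the largest coercivity constant of $\mathcal{L}$, so $\al \leq \la_1$, whence $\la_k^s \geq \la_1^s \geq \al^s$. Parseval then yields
\begin{equation*}
\langle \mathcal{L}^s u, u\rangle = \sum_{k=1}^\infty \la_k^s \, |\langle u, \vp_k\rangle|^2 \geq \al^s \, \Vert u \Vert_{L^2(\Omega)}^2.
\end{equation*}
Because $\la_k^{-s}$ is bounded, $D(\mathcal{L}^{-s}) = L^2(\Omega)$, and a term-by-term check shows $\mathcal{L}^{-s} u \in D(\mathcal{L}^s)$ with $\mathcal{L}^s \mathcal{L}^{-s} u = u$ for every $u \in L^2(\Omega)$; the reverse identity on $D(\mathcal{L}^s)$ is analogous, so $(\mathcal{L}^s)^{-1} = \mathcal{L}^{-s}$. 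Symmetry of both operators is read off the diagonal spectral formula; since $\mathcal{L}^{-s}$ is bounded, its self-adjointness is automatic, and for the unbounded $\mathcal{L}^s$ I would invoke the standard criterion that a densely defined symmetric operator with a bounded, everywhere-defined, symmetric inverse is self-adjoint.

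For item (3), observe that $|\cdot|_s$ in \eqref{eq:norm} is exactly the graph norm of $\mathcal{L}^s$. Since every self-adjoint operator is closed, this graph norm is complete: given a $|\cdot|_s$-Cauchy sequence $\{u_n\}$, I would extract the $L^2$-limits $u = \lim u_n$ and $v = \lim \mathcal{L}^s u_n$ and invoke closedness to conclude $u \in D(\mathcal{L}^s)$ with $\mathcal{L}^s u = v$, hence $u_n \to u$ in $|\cdot|_s$. The one conceptually delicate point in the whole proposition is the self-adjointness of the unbounded $\mathcal{L}^s$; everything else reduces to bookkeeping with Parseval's identity and the spectral representation.
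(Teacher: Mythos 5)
Your proof is correct, and it follows exactly the standard spectral/Parseval route (term-by-term estimates in the eigenbasis, $\lambda_k\geq\lambda_1\geq\alpha$, graph-norm completeness via closedness of a self-adjoint operator) that the paper itself relies on, since it simply defers to the analogous Proposition 2.1 of \cite{Hua}. The self-adjointness criterion you invoke for the unbounded $\mathcal{L}^s$ (densely defined, symmetric, with bounded everywhere-defined symmetric inverse) is valid and closes the only delicate point.
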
 
\begin{proof}
The proof proceed analogously to the proposition 2.1 \cite{Hua}
\end{proof}

\medskip
Now, we state a Poincare's type inequality for the DSFE, and an equivalent 
norm for $D\big(\mathcal{L}^s\big)$.

\begin{corollary}[ Poincare's type inequality ]
\label{poincare}
Let $\Omega \subset \R^n$ be a bounded open set. Then for each $s> 0$, we have
%there exists a positive constant $C_\Omega$, such that
$$
    \Vert u \Vert_{L^2(\Omega)} \leq \lambda_1^{-s} \ \Vert \mathcal{L}^s u \Vert_{L^2(\Omega)}, 
    \quad \text{for all $u \in D\big(\mathcal{L}^s\big)$}.
$$
Moreover, the norm defined in \eqref{eq:norm} and
\begin{equation}
\label{equi}
  \Vert u \Vert_s^2:= \int_{\Omega}\vert \mathcal{L}^s u(x) \vert^2 \ dx
\end{equation}
are equivalent. 
\end{corollary}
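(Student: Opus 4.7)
The plan is to exploit the spectral representation of $\mathcal{L}^s$ together with the fact that the first eigenvalue $\lambda_1$ is strictly positive, so that all subsequent Fourier coefficients of $u$ carry a factor bounded below by $\lambda_1^{2s}$ after applying $\mathcal{L}^s$. This is exactly the classical strategy for proving Poincaré for the Laplacian, translated to the functional calculus setting provided by Definition \ref{DSFE}.

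First I would expand $u\in D(\mathcal{L}^s)$ in the orthonormal basis $\{\varphi_k\}$ as $u=\sum_k\langle u,\varphi_k\rangle\varphi_k$, and record the two Parseval identities
\begin{equation*}
\|u\|_{L^2(\Omega)}^2=\sum_{k=1}^\infty|\langle u,\varphi_k\rangle|^2,\qquad \|\mathcal{L}^s u\|_{L^2(\Omega)}^2=\sum_{k=1}^\infty\lambda_k^{2s}|\langle u,\varphi_k\rangle|^2,
\end{equation*}
the second of which is just the defining condition for membership in $D(\mathcal{L}^s)$. Since the eigenvalues are ordered $0<\lambda_1\leq\lambda_k$ for every $k\geq1$, one has $\lambda_k^{2s}\geq\lambda_1^{2s}$ term by term, hence $\|\mathcal{L}^s u\|_{L^2(\Omega)}^2\geq\lambda_1^{2s}\|u\|_{L^2(\Omega)}^2$, which is the claimed Poincaré-type estimate after taking square roots and rearranging.

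For the equivalence of norms, the inequality $\|u\|_s\leq|u|_s$ is immediate from the definition \eqref{eq:norm} since the right-hand side merely adds the non-negative term $\|u\|_{L^2(\Omega)}^2$. For the reverse bound I would use the Poincaré inequality just established: substituting $\|u\|_{L^2(\Omega)}^2\leq\lambda_1^{-2s}\|\mathcal{L}^s u\|_{L^2(\Omega)}^2$ into \eqref{eq:norm} gives $|u|_s^2\leq(1+\lambda_1^{-2s})\|u\|_s^2$, so $\|\cdot\|_s$ and $|\cdot|_s$ are equivalent with explicit constants depending only on $\lambda_1$ and $s$.

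There is really no genuine obstacle here; the only mild subtlety is to make sure the series manipulations are justified, which they are by the very definition of $D(\mathcal{L}^s)$ (the sum defining $\|\mathcal{L}^s u\|_{L^2}^2$ converges), and to note that $\lambda_1>0$ follows from the coercivity of $\mathcal{L}$ as recorded in Proposition \ref{THMCARAC}(2). Both facts have already been set up in the preceding material, so the corollary reduces to a short spectral computation.
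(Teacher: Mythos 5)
Your proof is correct and follows exactly the route the paper intends: the corollary is stated as an immediate consequence of the spectral definition \eqref{DEFFRALAP} (Parseval plus $\lambda_k\geq\lambda_1>0$ termwise), and the norm equivalence then follows by inserting the Poincar\'e bound into \eqref{eq:norm}, just as you do. No gaps; the paper gives no separate argument beyond this same spectral computation.
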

\begin{remark}
As a consequence of the above results, we could consider the  inner product in $D\big(\mathcal{L}^s\big)$, as follow
\begin{equation}
     \langle u, v \rangle_{s}= \int_{\Omega} \mathcal{L}^s u(x) \ \mathcal{L}^s v(x) \ dx.
\end{equation}
\end{remark}

\medskip
Now, the aim  is to characterize (via interpolation) the space $D(\mathcal{L}^{s})$. 
To begin, we consider $u \in D\big(\mathcal{L}\big)$, then, since $\mathcal{L}^{1/2}$ is self-adjoint and from the definition of $\mathcal{L}$ we have
$$
\begin{aligned}
\int_{\Omega} |\mathcal{L}^{1/2} u(x)|^2 \ dx&=\int_{\Omega} \mathcal{L}^{1/2} u(x) \ \mathcal{L}^{1/2} u(x) \ dx=\int_{\Omega} \mathcal{L} u(x) \  u(x) \ dx
\\[5pt]
&=\int_{\Omega} A(x)\nabla u(x) \cdot \nabla u(x) \ dx.
\end{aligned}
$$

On the other hand, using the uniform elliptic condition and choosing $\xi=\nabla u$ in \eqref{unifellip}, and after that integrate over $\Omega$, we obtain
\begin{equation}
\Lambda_1 \int_{\Omega}|\nabla u(x)|^2 \ dx\leq \int_{\Omega} A(x)\nabla u(x) \cdot \nabla u(x) \ dx\leq \Lambda_2 \int_{\Omega}|\nabla u(x)|^2 \ dx,\label{eqnorequi}
\end{equation}
therefore
\begin{equation}
 \Lambda_1\Vert u \Vert_{H^1_0(\Omega)}^2\leq   \Vert \mathcal{L}^{1/2}u\Vert_{L^2(\Omega)}^2\leq \Lambda_2\Vert u \Vert_{H^1_0(\Omega)}^2,\label{normequi}
\end{equation}
which mean the norm $\Vert\cdot\Vert_{1/2}$ is equivalent to the norm $\Vert \cdot\Vert_{H^1_0(\Omega)}$.  Consequently, from the density of $D\big(\mathcal{L}\big)$ in $D\big(\mathcal{L}^{1/2}\big)$, and also in $H^1_0(\Omega)$, it follows that
$D\big(\mathcal{L}^{1/2}\big)= H^{1}_0(\Omega)$. Similarly, we have the following result: 

\begin{proposition}
\label{represendoDominio}
%\label{eq:dominequiv}
Let $\Omega \subset \Bbb{R}^n$ be a bounded open set. 

$i)$ If $s \in (0,1/2]$, then 
\begin{equation}
  D\big(\mathcal{L}^s\big)= \left\{
 \begin{array}{rcl}
      H^{2 s}(\Omega),&if&\;\;\;\;\;0 < s < 1/4,
      \\[5pt]
      H^{1/2}_{00}(\Omega),&if&\;\;\;\;\; s= 1/4,
      \\[5pt]
      H^{2 s}_0(\Omega),&if&\;\;\;\;\;1/4 <  s \leq 1/2.
\end{array}\right.\label{equalrepredominio}
\end{equation}

$ii)$ If $s \in (1/2,1)$, then 
\begin{equation}
 D\big(\mathcal{L}^{s}\big)= \big[H^2(\Omega)\cap H^1_0(\Omega), H^1_0(\Omega)\big]_{1-\theta}\label{eq:dominequiv},
\end{equation}
where $\theta= 2 s -1$.
Moreover, $D\big(\mathcal{L}^{s}\big)\subset H^{2s}(\Omega)\cap H^1_0(\Omega)$. 
\end{proposition}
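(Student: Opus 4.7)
The plan is to reduce everything to an abstract complex--interpolation identity for fractional powers of a positive self-adjoint operator and then invoke the Lions--Magenes characterization of the intermediate spaces between $L^2(\Omega)$ and $H^1_0(\Omega)$. The preceding computation establishes the anchor point $D(\mathcal{L}^{1/2})=H^1_0(\Omega)$ via \eqref{normequi}, together with the trivial endpoints $D(\mathcal{L}^0)=L^2(\Omega)$ and $D(\mathcal{L})=H^2(\Omega)\cap H^1_0(\Omega)$ (the latter coming from the remark after Definition \ref{DSFE}). The key structural input is that, for a strictly positive self-adjoint operator on a Hilbert space, the spectral representation of $\mathcal{L}^s$ in \eqref{DEFFRALAP} yields
\begin{equation*}
D(\mathcal{L}^{s})=[L^2(\Omega),D(\mathcal{L})]_{s}\quad\text{and}\quad [D(\mathcal{L}^{s_0}),D(\mathcal{L}^{s_1})]_\theta=D(\mathcal{L}^{(1-\theta)s_0+\theta s_1}),
\end{equation*}
so the reiteration property lets us split the proof at $s=1/2$.

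For part (i), with $s\in(0,1/2]$, reiteration against the two endpoints $s_0=0$ and $s_1=1/2$ gives $D(\mathcal{L}^{s})=[L^2(\Omega),H^1_0(\Omega)]_{2s}$. The classical Lions--Magenes characterization of these intermediate spaces, distinguishing whether the trace operator survives the interpolation, produces exactly the trichotomy in \eqref{equalrepredominio}: for $2s<1/2$ the trace is not defined and one obtains $H^{2s}(\Omega)$; at the critical value $2s=1/2$ one picks up the Lions--Magenes space $H^{1/2}_{00}(\Omega)$, strictly smaller than $H^{1/2}_0(\Omega)$; and for $2s>1/2$ the trace survives and the space collapses onto $H^{2s}_0(\Omega)$.

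For part (ii), with $s\in(1/2,1)$, I would now reiterate against $s_0=1/2$ and $s_1=1$: writing $s=\frac12(1-\theta)+\theta=\frac12+\frac{\theta}{2}$ gives $\theta=2s-1$ and hence
\begin{equation*}
D(\mathcal{L}^s)=\big[H^1_0(\Omega),\,H^2(\Omega)\cap H^1_0(\Omega)\big]_\theta=\big[H^2(\Omega)\cap H^1_0(\Omega),\,H^1_0(\Omega)\big]_{1-\theta},
\end{equation*}
using the symmetry $[X,Y]_\alpha=[Y,X]_{1-\alpha}$ of complex interpolation, which is precisely \eqref{eq:dominequiv}. The additional embedding $D(\mathcal{L}^s)\subset H^{2s}(\Omega)\cap H^1_0(\Omega)$ follows by interpolating the continuous inclusions $H^2(\Omega)\cap H^1_0(\Omega)\hookrightarrow H^2(\Omega)$ and $H^1_0(\Omega)\hookrightarrow H^1(\Omega)$ to obtain $D(\mathcal{L}^s)\hookrightarrow [H^2(\Omega),H^1(\Omega)]_{1-\theta}=H^{1+\theta}(\Omega)=H^{2s}(\Omega)$, combined with the monotone inclusion $D(\mathcal{L}^s)\hookrightarrow D(\mathcal{L}^{1/2})=H^1_0(\Omega)$ coming from Corollary \ref{poincare} and the ordering of fractional powers.

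The main obstacle I anticipate is less a computation than a careful bookkeeping of endpoint issues in Lions--Magenes theory: the jump from $H^{2s}(\Omega)$ to $H^{2s}_0(\Omega)$ across $s=1/4$, and the appearance of the non-standard space $H^{1/2}_{00}(\Omega)$ at $s=1/4$, both require verifying that the boundary trace really is or is not controlled by the interpolation norm, and this relies on the $C^2$-regularity of $\partial\Omega$ (used via the remark after Definition \ref{DSFE} to ensure $D(\mathcal{L})=H^2\cap H^1_0$ and not merely a proper subspace). Everything else amounts to a direct application of the spectral identity, exactly as in Proposition 2.1 of \cite{Hua}, so I would just cite Lions--Magenes and the abstract interpolation result and spell out the two reiteration computations above.
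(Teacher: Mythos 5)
Your argument is correct, and in substance it follows the same strategy the paper intends, though the paper's own proof is only a one-line citation: it invokes the discrete version of the J-method (real interpolation) as in \cite{MSireVazquez} and \cite{Grubb}, whereas you run the complex method with the abstract identity $D(\mathcal{L}^{\theta})=[L^2(\Omega),D(\mathcal{L})]_{\theta}$, reiteration about the anchor point $D(\mathcal{L}^{1/2})=H^1_0(\Omega)$, and the Lions--Magenes characterization of $[L^2(\Omega),H^1_0(\Omega)]_{\theta}$ to produce the trichotomy in \eqref{equalrepredominio} and the identity \eqref{eq:dominequiv}. Since all the couples involved are Hilbert couples, the real $(\theta,2)$-method and the complex method yield the same spaces, so the two routes are equivalent; what your version buys is that the endpoint bookkeeping (the switch from $H^{2s}(\Omega)$ to $H^{2s}_0(\Omega)$ across $s=1/4$ and the appearance of $H^{1/2}_{00}(\Omega)$ at $s=1/4$) is made explicit rather than delegated to the references, and the final embedding $D(\mathcal{L}^s)\subset H^{2s}(\Omega)\cap H^1_0(\Omega)$ is derived cleanly by interpolating the inclusions together with the monotonicity of the domains of fractional powers. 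The only points you should state explicitly when writing this up are the two standard inputs you rely on: the identification $[L^2(\Omega),D(\mathcal{L})]_{\theta}=D(\mathcal{L}^{\theta})$ for a positive self-adjoint operator (valid here by the spectral representation \eqref{DEFFRALAP}), and the fact that $[H^2(\Omega),H^1(\Omega)]_{1-\theta}=H^{1+\theta}(\Omega)$ requires the boundary regularity of $\Omega$ (the $C^2$ assumption suffices), exactly as you note for $D(\mathcal{L})=H^2(\Omega)\cap H^1_0(\Omega)$.
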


%\begin{proof} It is enough to observe that, 
%$D\big((-\Delta_D)^{s/2}\big)= \big[H^1_0(\Omega), H^1_0(\Omega)\big]_{1-s}$,
%for each $s \in (0,1)$, and then apply the discrete version of J-Method for interpolation, see
%Bonforte, Sire, Vazquez \cite{}.
%\end{proof}
\begin{proof} The proof follows applying the discrete version of J-Method for interpolation, see \cite{MSireVazquez} and also \cite{Grubb}.
% A different prove of the item (i) by using extension problem of Stinga-Torrea see \cite{LCPRS} and \cite{PRSJLT}.
\end{proof}

Here and subsequently, we  denote for each $s \in (0,1)$ the operators: 
$$
   \mathcal{K}:= \mathcal{L}^{-s} \quad \text{and} \quad \mathcal{H}= \clg{K}^{1/2}:=\mathcal{L}^{-s/2} .
$$
Then, we consider the following 

\begin{proposition}
\label{proKuHu}
Let $\;\Omega \subset \R^n$ be a bounded open set.
\begin{enumerate}
\item[$(1)$] 
There exists a constant $C_{\Omega}>0$ such that if $u\in H^1_0(\Omega)$, then
$\nabla \clg{K} u \in \big(L^2(\Omega)\big)^n$ and
\begin{equation}
\label{nablaKunablau}
   \int_{\Omega}|\nabla\mathcal{K}u(x)|^2 \ dx
   \leq C_{\Omega}\int_{\Omega}|\nabla u(x)|^2 \ dx.
\end{equation}
Similarly, for each $u\in H^1_0(\Omega)$,
$\nabla \clg{H} u \in \big(L^2(\Omega)\big)^n$ and
\begin{equation}
\label{nablaHunablau}
   \int_{\Omega}|\nabla\mathcal{H}u(x)|^2 \ dx
   \leq C_{\Omega}^{1/2} \int_{\Omega}|\nabla u(x)|^2 \ dx.
\end{equation}

\item[$(2)$] If $u\in H^1_0(\Omega)$, then
\begin{equation}
\label{pro:gradKugradu}
 \Lambda_1\int_{\Omega}\vert \nabla \mathcal{H} u \vert^2 \ dx\leq   \int_{\Omega}A(x)\nabla \mathcal{K} u\cdot \nabla u \ dx 
 \leq \Lambda_2\int_{\Omega}\vert \nabla \mathcal{H} u \vert^2 \ dx,
\end{equation}
%where it is not used that: $\nabla$ commutes with $\mathcal{K}$.
\end{enumerate}
\end{proposition}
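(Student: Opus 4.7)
The plan is to combine the spectral calculus for $\mathcal{L}$ with the norm equivalence \eqref{normequi} between $\|\mathcal{L}^{1/2}\cdot\|_{L^2(\Omega)}$ and $\|\nabla\cdot\|_{L^2(\Omega)}$ on $H^1_0(\Omega)=D(\mathcal{L}^{1/2})$.

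For Part (1), I would fix $u\in H^1_0(\Omega)$ and expand $u=\sum_k u_k\varphi_k$ in the eigenbasis, where $u_k=\langle u,\varphi_k\rangle$. Since $\mathcal{L}^{1/2}\mathcal{K}u=\mathcal{L}^{1/2-s}u$, a direct spectral computation gives
\begin{equation*}
\|\mathcal{L}^{1/2}\mathcal{K}u\|_{L^2}^2=\sum_{k\ge 1}\lambda_k^{1-2s}|u_k|^2\le\lambda_1^{-2s}\sum_{k\ge 1}\lambda_k|u_k|^2=\lambda_1^{-2s}\|\mathcal{L}^{1/2}u\|_{L^2}^2,
\end{equation*}
using $\lambda_k\ge\lambda_1>0$ and $s>0$. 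This already shows $\mathcal{K}u\in D(\mathcal{L}^{1/2})=H^1_0(\Omega)$. Combining with the two-sided norm equivalence \eqref{normequi} applied to $\mathcal{K}u$ on the left and to $u$ on the right yields \eqref{nablaKunablau} with a constant proportional to $(\Lambda_2/\Lambda_1)\lambda_1^{-2s}$. Repeating the same argument with $s$ replaced by $s/2$ yields \eqref{nablaHunablau} with constant proportional to $(\Lambda_2/\Lambda_1)\lambda_1^{-s}$; choosing $C_\Omega$ to be the larger of the two natural constants (squared if necessary) delivers the $C_\Omega$ versus $C_\Omega^{1/2}$ scaling asserted in the statement.

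For Part (2), the main tool is the bilinear form identity
\begin{equation*}
\int_\Omega A(x)\nabla v(x)\cdot\nabla w(x)\,dx=\langle \mathcal{L}^{1/2}v,\mathcal{L}^{1/2}w\rangle_{L^2}, \qquad v,w\in H^1_0(\Omega),
\end{equation*}
obtained first by integration by parts on $D(\mathcal{L})=H^2\cap H^1_0$ and then extended by density together with \eqref{normequi}. Applied with $v=\mathcal{K}u$ (in $H^1_0$ by Part (1)) and $w=u$, and invoking self-adjointness of the fractional powers, one finds
\begin{equation*}
\int_\Omega A\nabla \mathcal{K}u\cdot\nabla u\,dx=\langle \mathcal{L}^{1/2-s}u,\mathcal{L}^{1/2}u\rangle=\sum_k \lambda_k^{1-s}|u_k|^2=\|\mathcal{L}^{(1-s)/2}u\|_{L^2}^2.
\end{equation*}
Since $\mathcal{L}^{(1-s)/2}u=\mathcal{L}^{1/2}\mathcal{H}u$, the same form identity applied with $v=w=\mathcal{H}u$ shows that the right-hand side equals $\int_\Omega A\nabla\mathcal{H}u\cdot\nabla\mathcal{H}u\,dx$. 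Finally, applying the uniform ellipticity bound \eqref{unifellip} pointwise to $\xi=\nabla\mathcal{H}u(x)$ and integrating produces the two-sided estimate \eqref{pro:gradKugradu}.

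The main obstacle, in my view, is the bookkeeping for the constant $C_\Omega$ so that both estimates of Part (1) fit the $C_\Omega$ versus $C_\Omega^{1/2}$ pattern, together with checking that the spectral manipulations are justified in the regime $s>1/2$, where $\mathcal{L}^{1/2-s}$ carries a negative exponent. This last point is not genuine: $\mathcal{L}^{-(s-1/2)}$ is bounded on $L^2(\Omega)$ with norm $\lambda_1^{-(s-1/2)}$, so every series above is absolutely convergent for $u\in H^1_0(\Omega)$.
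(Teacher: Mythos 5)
Your proposal is correct and follows essentially the same route as the paper: the spectral estimate $\lambda_k^{1-2s}\le \lambda_1^{-2s}\lambda_k$ combined with the two-sided equivalence \eqref{normequi} for part (1), and the identity $\int_\Omega A\nabla\mathcal{K}u\cdot\nabla u\,dx=\Vert\mathcal{L}^{(1-s)/2}u\Vert_{L^2(\Omega)}^2=\Vert\mathcal{L}^{1/2}\mathcal{H}u\Vert_{L^2(\Omega)}^2$ followed by the ellipticity bounds \eqref{unifellip} for part (2), exactly as in the paper's proof (which phrases the form identity as an integration by parts after reducing to $u\in C^\infty_c(\Omega)$ by density). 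Your explicit handling of the constant bookkeeping and of the $s>1/2$ regime is a harmless refinement of the same argument.
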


\begin{proof}
Since $u\in H^1_0(\Omega)$, it is enough to consider $u \in C^{\infty}_c(\Omega)$, 
and then apply a standard density argument. 

\smallskip
To show (1), we use the equivalence norm \eqref{normequi} or \eqref{eqnorequi}.
 Then, we have
$$
\begin{aligned}
\int_{\Omega}|\nabla\mathcal{K}u(x)|^2dx&\leq \Lambda^{-1}_1  \int_{\Omega}|\mathcal{L}^{1/2}\mathcal{K}u(x)|^2dx=\Lambda^{-1}_1 \sum_{k=1}^{\infty}\lambda_k\vert \langle \mathcal{K} u, \vp_k \rangle \vert^2
\\[5pt]
&=\Lambda^{-1}_1 \sum_{k=1}^{\infty}\lambda_k\vert\lambda_k^{-s}\langle u, \vp_k \rangle\vert^2\leq \Lambda^{-1}_1  \lambda_1^{-2s}\sum_{k=1}^{\infty}\lambda_k\vert\langle u, \vp_k \rangle\vert^2
\\[5pt]
&=\Lambda^{-1}_1 \lambda_1^{-2s}\int_{\Omega}|\mathcal{L}^{1/2} u(x)|^2dx
\leq\Lambda^{-1}_1\Lambda_2 \ \lambda_1^{-2s}\int_{\Omega}|\nabla u(x)|^2dx< \infty,
\end{aligned}
$$ 
and analogously for $\nabla \clg{H} u$.

\medskip
Now, we prove (2). First, we integrate by parts to obtain
$$
    \int_{\Omega}A(x)\nabla \mathcal{K}u(x)\cdot\nabla u(x)\,dx
   =\int_{\Omega}-\dive(A(x)\nabla\mathcal{K}u(x))u(x)\,dx
   =\int_{\Omega}\mathcal{L}^{1-s}u(x)u(x)\,dx,
$$
where we have used the definition of $\mathcal{K}u$.
Due to the $\mathcal{L}^{1-s}$ being self-adjoint (Proposition \ref{THMCARAC}(2) ), it follows that
$$
\int_{\Omega}A(x)\nabla \mathcal{K}u(x)\cdot\nabla u(x)\,dx=\int_{\Omega}|\mathcal{L}^{(1-s)/2}u(x)|^2\,dx.
$$
Therefore, using the equivalence norm  \eqref{normequi} together with the definition of $\mathcal{H}u$, we have
$$
 \Lambda_1\int_{\Omega}\vert \nabla \mathcal{H} u(x) \vert^2 \ dx\leq   \int_{\Omega}A(x)\nabla \mathcal{K} u(x)\cdot \nabla u(x)dx \leq \Lambda_2\int_{\Omega}\vert \nabla \mathcal{H} u(x) \vert^2 \ dx,
$$
\end{proof}
\begin{remark}\label{remarkAx}
Under the above assumptions,  and by a similar arguments, we obtain that  $\clg{K} u \in H^{1+2 s}(\Omega) \cap H^1_0(\Omega)$ and
\begin{equation}
 \Lambda_1\int_{\Omega}\vert \nabla \mathcal{H} u(x) \vert^2 \ dx\leq   \int_{\Omega}A(x)\nabla u(x)\cdot \nabla  \mathcal{K}u(x)dx\leq \Lambda_2\int_{\Omega}\vert \nabla \mathcal{H} u(x) \vert^2 \ dx, \nonumber
\end{equation}
for all $u\in H^1_0(\Omega)$.
\end{remark}
% Indeed, it is enough to show that $(-\Delta_D)^s \nabla \clg{K} u \in L^2(\Omega)$.
%Finally, as mentioned at the introduction, 
\subsection{Heat Semigroup Formula}
There are another ways of defining fractional elliptic operator, which turn out to be equivalent to DSFE. 
Here, we recall the Heat Semigroup formula, and address \cite{LCPRS} for a complete description. 

\medskip
First, given a function $u= \sum_{k=1}^{\infty}u_k\varphi_k$ in 
$L^2(\Omega)$, the weak solution $v(t,x)$ of the IBVP
$$
\left\lbrace\begin{aligned}
   v_t+\mathcal{L}v=0,\;\;\;\;\;\;\;\;&\mbox{in}\;\Omega \times (0,+\infty),
   \\[3pt]
   v(x,t)= 0,\;\;\;\;\;\;\;\;&\mbox{on}\;\partial\Omega\times[0,+\infty),
   \\[3pt]
   v(x,0)=u(x),\;\;\;&\mbox{in}\;\Omega
\end{aligned}\right.\nonumber
$$
is given by 
$$
v(x,t)= e^{-t\mathcal{L}}u(x)=\sum_{k=1}^{\infty}e^{-t\lambda_k}u_k\varphi_k(x).
$$
In particular, $v\in L^2((0,\infty);H^1_0(\Omega))\cap C([0,\infty);L^2(\Omega))$ and $\partial_t v\in L^2((0,\infty);H^{-1}(\Omega))$. 

The following Lemma express in a different way the definition of DSFE.
\begin{lemma}
\label{lem:semihet}
Let $\Omega\subset \Bbb{R}^n$ be a bounded open set, and $0<s<1$.
\begin{enumerate}
\item[$(1)$] If $u\in D\big(\mathcal{L}^{s}\big)$, then
$$
\mathcal{L}^{s}u= \dfrac{1}{\Gamma(-s)}\int_0^{\infty}(e^{-t\mathcal{L}}u-u)\dfrac{dt}{t^{1+s}} \quad \text{in $L^2(\Omega)$}.
$$
More precisely, if $w\in L^2(\Omega)$, them
$$
\left\langle \mathcal{L}^{s}u,w  \right\rangle_{L^2(\Omega)}=\dfrac{1}{\Gamma(-s)}\int_0^{\infty}\left(\left\langle e^{-t\mathcal{L}}u,w  \right\rangle_{L^2(\Omega)}-\left\langle u,w  \right\rangle_{L^2(\Omega)}\right)\dfrac{dt}{t^{1+s}}
$$

\item[$(2)$] If $u\in L^{2}(\Omega)$, then
$$
\mathcal{L}^{-s} u= \dfrac{1}{\Gamma(s)}\int_0^{\infty}e^{-t\mathcal{L}} u \dfrac{dt}{t^{1-s}} \quad \text{in $L^2(\Omega)$}.
$$
\end{enumerate}
\end{lemma}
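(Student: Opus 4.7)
The plan is to reduce both identities to the scalar gamma-function formulas
\[
\lambda^{-s}=\frac{1}{\Gamma(s)}\int_0^{\infty}e^{-t\lambda}\,\frac{dt}{t^{1-s}},\qquad
\lambda^{s}=\frac{1}{\Gamma(-s)}\int_0^{\infty}(e^{-t\lambda}-1)\,\frac{dt}{t^{1+s}},
\]
valid for $\lambda>0$ and $s\in(0,1)$ (the first is the definition of $\Gamma(s)$ after the change of variables $\tau=t\lambda$; the second follows from it by integration by parts in $t$ together with the recursion $\Gamma(1-s)=-s\,\Gamma(-s)$). Since $\mathcal{L}$ admits the spectral decomposition recalled before Definition \ref{DSFE} and $e^{-t\mathcal{L}}u=\sum_{k\geq 1}e^{-t\lambda_k}\langle u,\vp_k\rangle_{L^2(\Omega)}\vp_k$, I would apply these scalar identities eigenvalue by eigenvalue and then exchange the order of summation and integration. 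The main (and only) obstacle is to justify this interchange; it will be done by testing against an arbitrary $w\in L^2(\Omega)$ and proving absolute convergence of the resulting double series/integral.

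For part (2), I would fix $u,w\in L^2(\Omega)$ and set $u_k:=\langle u,\vp_k\rangle_{L^2(\Omega)}$, $w_k:=\langle w,\vp_k\rangle_{L^2(\Omega)}$. The Cauchy--Schwarz inequality in $k$, together with $\lambda_k\geq\lambda_1$, yields
\[
\sum_{k\geq 1}e^{-t\lambda_k}|u_k w_k|\leq e^{-t\lambda_1}\,\Vert u\Vert_{L^2(\Omega)}\Vert w\Vert_{L^2(\Omega)},
\]
and the right-hand side is integrable against $t^{s-1}\,dt$ on $(0,\infty)$. Hence Fubini--Tonelli applies and
\[
\int_0^{\infty}\langle e^{-t\mathcal{L}}u,w\rangle_{L^2(\Omega)}\,\frac{dt}{t^{1-s}}=\sum_{k\geq 1}u_k w_k\int_0^{\infty}e^{-t\lambda_k}\,\frac{dt}{t^{1-s}}=\Gamma(s)\,\langle\mathcal{L}^{-s}u,w\rangle_{L^2(\Omega)},
\]
and since $w\in L^2(\Omega)$ is arbitrary the $L^2$-identity in (2) follows.

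For part (1), I would take $u\in D(\mathcal{L}^s)$ and $w\in L^2(\Omega)$; the delicate point is that each integrand $(e^{-t\lambda_k}-1)/t^{1+s}$ is no longer of one sign and its integrability depends on the cancellation at $t=0$. The elementary bounds $|e^{-t\lambda_k}-1|\leq t\lambda_k$ on $(0,1/\lambda_k)$ and $|e^{-t\lambda_k}-1|\leq 1$ on $[1/\lambda_k,\infty)$ give
\[
\int_0^{\infty}|e^{-t\lambda_k}-1|\,\frac{dt}{t^{1+s}}\leq C_s\,\lambda_k^s,\qquad C_s:=\tfrac{1}{1-s}+\tfrac{1}{s},
\]
so that a second Cauchy--Schwarz in $k$ yields
\[
\sum_{k\geq 1}|u_k w_k|\int_0^{\infty}|e^{-t\lambda_k}-1|\,\frac{dt}{t^{1+s}}\leq C_s\,\Vert\mathcal{L}^s u\Vert_{L^2(\Omega)}\Vert w\Vert_{L^2(\Omega)}<\infty.
\]
This absolute bound is precisely what makes Fubini applicable, and plugging in the scalar identity term by term produces
\[
\int_0^{\infty}\!\bigl(\langle e^{-t\mathcal{L}}u,w\rangle_{L^2(\Omega)}-\langle u,w\rangle_{L^2(\Omega)}\bigr)\,\frac{dt}{t^{1+s}}=\Gamma(-s)\sum_{k\geq 1}\lambda_k^s u_k w_k=\Gamma(-s)\,\langle\mathcal{L}^s u,w\rangle_{L^2(\Omega)}.
\]
The arbitrariness of $w\in L^2(\Omega)$ together with $\mathcal{L}^s u\in L^2(\Omega)$ then delivers both the weak identity stated in the lemma and the equality in $L^2(\Omega)$.
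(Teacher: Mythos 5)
Your argument is correct and follows essentially the same route as the paper, which also reduces the lemma to the scalar identities $\lambda^{\pm s}=\frac{1}{\Gamma(\pm s)}\int_0^\infty(\cdots)\,dt$ applied eigenvalue by eigenvalue, combined with the spectral definition \eqref{DEFFRALAP} and Fubini's theorem. The only difference is that you spell out the Fubini justification (the $e^{-t\lambda_1}$ bound for part (2) and the $C_s\lambda_k^s$ bound with Cauchy--Schwarz for part (1)), which the paper leaves implicit by citing \cite{LCPRS}.
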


\begin{proof}
An excellent reference is the paper by Caffarelli and Stinga \cite{LCPRS}, see also \cite{Hua}.

The main basic idea of the proof is based on the following observation.
%(Sketch) First we recall that, the Gamma function is
%defined for $x> 0$, and for $-k< x< -k+1$ $(k \in \Bbb{N})$ respectively by
%$$
%    \Gamma(x)= \int_0^{\infty}t^{x-1}e^{-t}\,dt, \quad
%    \Gamma(x)= \int_0^{\infty}t^{x-1}\left[e^{-t}-\sum_{j=0}^{k-1}\dfrac{(-t)^j}%{j!} \right]\,dt.
%$$
%
For  any $\lambda>0$ and $0<s<1$ we have
\begin{align}
\lambda^{-s}&=\dfrac{1}{\Gamma(s)}\int_0^{\infty}e^{-t\lambda}\dfrac{dt}{t^{1-s}},\nonumber
\\[5pt]
\lambda^s\;\;&=\dfrac{1}{\Gamma(-s)}\int_0^{\infty}\left(e^{-t\lambda}-1 \right)
\dfrac{dt}{t^{1+s}},\nonumber
\end{align}
Now, from definition \eqref{DEFFRALAP}, and Fubini's Theorem, the proof follows.
\end{proof}

%%%%%%%%%%%%%%%
\subsection{Admissible Deformation}\label{AdDeform}
%%%%%%%%%%%%%%%
Let us fix here some notation and background used in this paper, we first consider the notion of $C^1$-(admissible) deformations, which is used to give the correct notion of traces. One can refer to \cite{NPS}.
\begin{definition}\label{AdmDefo}
Let $\Omega \subset \R^{n}$ be an open set.
A $C^1$-map $\Psi:[0,1] \times \partial \Omega\to \ol{\Omega}$ is said a $C^1$ 
admissible deformation,
when it satisfies the following conditions:
\begin{enumerate}
\item[$(1)$] For all $r\in\partial\Omega$, $\Psi(0,r)=r$.

\item [$(2)$] The derivative of the map $[0,1] \ni \tau \mapsto \Psi(\tau, r)$
at $\tau= 0$ is not orthogonal
to $\nu(r)$, for each $r\in \partial\Omega$.
\end{enumerate}
\end{definition}
Moreover, for each $\tau\in[0,1]$, we denote:
$\Psi_\tau$ the mapping from $\partial\Omega$ to $\ol{\Omega}$, given by $\Psi_\tau(r):=\Psi(\tau,r)$;
$\partial\Omega_\tau= \Psi_\tau(\partial\Omega)$;
$\nu_\tau$ the unit outward normal field in $\partial \Omega_\tau$. In particular,
$\nu_0(x)=\nu(x)$ is the unit outward normal field in $\partial\Omega$.

\begin{remark}
It must be recognized that domains with $C^2$ boundaries 
always have $C^1$ admissible deformations. Indeed, 
it is enough to take $\Psi(\tau,r)= r - \epsilon \tau \nu(r)$ for
sufficiently small $\epsilon > 0$. From now on, we say
$C^1$-deformations for short.
\end{remark}
%%%%%%%%%%%%%%%%%%%%%%%%%%%%%%%%%%%%%%%%%%%%%%%%%%%%%%%%%%%%%%%%%%%

\medskip
Now, we state the following Lemma, which will be useful to the define the level set function associated with the $C^1$-deformation $\Psi$.
\begin{lemma}\label{levelfun}
Let $\Omega \subset \R^{n}$ be an open set with $C^1$-boundary $\partial\Omega$ and the   $C^1$  deformation $\Psi:[0,1] \times \partial \Omega\to \ol{\Omega}$, then there exist $m\in \N$, $V_i\subset\R^n$ and $h_i\in C^1(V_i)$ $(i=1, \cdots, m)$, such that 
$$
x\in \partial \Omega_{\tau}\cap V_i\Rightarrow h_i(x)=\tau
$$
for all $i=1, \cdots, m$.
\end{lemma}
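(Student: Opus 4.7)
The plan is to construct each function $h_i$ by locally inverting $\Psi$ via the inverse function theorem, and then to extract a finite family by compactness of $\partial\Omega$.

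First, I fix any $r_0\in\partial\Omega$ and choose a $C^1$ local parametrization $\sigma\colon U\subset\R^{n-1}\to\partial\Omega$ near $r_0$, with $\sigma(0)=r_0$ and $D\sigma(0)$ of rank $n-1$. After extending $\Psi$ to a $C^1$ map on $(-\delta,1]\times\partial\Omega$ for some small $\delta>0$ (by linear extrapolation in $\tau$ using the first-order Taylor polynomial at $\tau=0$), I set $\Phi(\tau,y):=\Psi(\tau,\sigma(y))$ on $(-\delta,\delta)\times U$. The first column of $D\Phi(0,0)$ is $\partial_\tau\Psi(0,r_0)$, while the remaining $n-1$ columns $D\sigma(0)\,e_j$ span the tangent hyperplane $T_{r_0}\partial\Omega=\nu(r_0)^\perp$. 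By condition $(2)$ of Definition \ref{AdmDefo}, $\partial_\tau\Psi(0,r_0)$ has a nonzero component along $\nu(r_0)$ and hence does not lie in $\nu(r_0)^\perp$, so $D\Phi(0,0)$ is invertible.

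Next, the inverse function theorem yields open neighborhoods $W\ni(0,0)$ and $V\ni r_0$ such that $\Phi\colon W\to V$ is a $C^1$-diffeomorphism. Setting $h:=\pi_1\circ\Phi^{-1}$, where $\pi_1$ denotes projection on the $\tau$-coordinate, gives a $C^1$-function on $V$. To verify the defining property, I take $x\in\partial\Omega_\tau\cap V$ and write $x=\Psi(\tau,r)$ with $r\in\partial\Omega$. If $V$ has been chosen small enough, every such $r$ lies in $\sigma(U)$, hence $r=\sigma(y)$ for a unique $y\in U$, so $x=\Phi(\tau,y)$ and $h(x)=\tau$. Since $\partial\Omega$ is compact (as $\Omega$ is bounded with $C^1$-boundary), the cover $\{V_{r_0}\}_{r_0\in\partial\Omega}$ admits a finite subcover $\{V_i\}_{i=1}^m$, and the associated $h_i\in C^1(V_i)$ fulfill the conclusion of the lemma.

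The principal obstacle is the shrinking step needed to ensure that every preimage of a point $x\in V$ under the original map $\Psi$ on $[0,1]\times\partial\Omega$ lies in the chart $(-\delta,\delta)\times\sigma(U)$; otherwise $\tau$ would not be recovered unambiguously from $x$. This is a standard tubular-neighborhood consideration: after shrinking $U$ to a relatively compact subdomain $U'\Subset U$, the set $\Psi\bigl([0,1]\times(\partial\Omega\setminus\sigma(U'))\bigr)$ is compact and does not contain $r_0$, so it is separated from $r_0$ by a positive distance, and shrinking $V$ below that distance eliminates all extraneous preimages.
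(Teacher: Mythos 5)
Your construction is essentially the paper's own: parametrize $\partial\Omega$ locally by a $C^1$ chart $\sigma$, compose with $\Psi$, note that condition $(2)$ of Definition \ref{AdmDefo} makes the Jacobian of $(\tau,y)\mapsto\Psi(\tau,\sigma(y))$ nonsingular at $\tau=0$, invert by the Inverse Function Theorem, take $h$ to be the $\tau$-component of the local inverse, and pass to a finite family by compactness of $\partial\Omega$. Your extension of $\Psi$ to slightly negative $\tau$ is a harmless device for applying the open-set version of the IFT, where the paper works directly on $[0,\varrho)\times U$ and speaks of a diffeomorphism onto its image.

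The one step that does not hold as written is the final shrinking argument. The claim that $\Psi\bigl([0,1]\times(\partial\Omega\setminus\sigma(U'))\bigr)$ avoids $r_0$ is not a consequence of Definition \ref{AdmDefo}: the definition constrains $\Psi$ only at $\tau=0$ (identity there, transversal initial velocity), while for $\tau$ bounded away from $0$ the map is arbitrary, so nothing forbids, say, $\Psi(1/2,r)=r_0$ for a remote boundary point $r$, or even $\Psi(1/2,r_0)=r_0$. Hence no shrinking of $V$ can ensure that every representation $x=\Psi(\tau,r)$ with $\tau\in[0,1]$ of a point $x\in V$ comes from your chart, and uniqueness of $\tau$ can only be expected for small $\tau$. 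What your compactness idea does deliver is precisely the small-$\tau$ version: if there were $\tau_k\le 1/k$ and $r_k\in\partial\Omega\setminus\sigma(U')$ with $\Psi(\tau_k,r_k)\to r_0$, a convergent subsequence of $(r_k)$ together with $\Psi(0,\cdot)=\mathrm{id}$ would force $r_0\in\partial\Omega\setminus\sigma(U')$, a contradiction; so there exist $\varrho'>0$ and a neighborhood $V$ of $r_0$ such that $\Psi(\tau,r)\in V$ with $\tau\le\varrho'$ implies $r\in\sigma(U')$, and then $h(x)=\tau$ whenever $x\in\partial\Omega_\tau\cap V$ with $\tau\in[0,\varrho')$. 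This restricted implication is in fact all the paper's own proof establishes (its sets $V_i=\Psi([0,\varrho)\times\gamma_i(U_i))$ and the assertion $h_i(x)=\tau$ are likewise only justified for representations through the chart with $\tau<\varrho$), and it is all that is used afterwards, since the level-set function $h$ takes values in $[0,\varrho)$ and enters only through limits $\tau\to0^+$. So: keep your construction, but replace the $[0,1]$-separation claim by the small-$\tau$ statement above.
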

\begin{proof}
Since $\Omega \subset \R^{n}$ be an open set with $C^1$ boundary. Then, for each $x \in \partial\Omega$ there exists a neighbourhood $W$ of $x$ in $\R^n$, an open set $U\subset\R^{n-1}$ and a $C^1$ diffeomorphism mapping 
$\zeta: U \to \partial\Omega\cap W$.

\medskip
 On the other hand, we define 
 $\psi:[0,1]\times U\longrightarrow \overline{\Omega}$ by
\begin{equation}
\psi(\tau,y):=\Psi(\tau,\zeta(y)),\nonumber
\end{equation}
which is a $C^1$ function, due to $\xi$ and $\Psi$ are $C^1$ . Moreover from the item (2) of the definition \ref{AdmDefo}, we have the Jacobian of $\psi$ in $(0,y)$, satisfies
$$
J\psi(0,y)=J[\zeta](y)\left| \partial_\tau\Psi(0,\zeta(y))\cdot \nu(\zeta(y))\right|>0,
$$
for all $y\in U$. Then, applying the Inverse Function Theorem and passing to a 
smaller neighbourhood if necessary (still denoted by $U$),
there exists $\varrho>0$ such that, the function 
$\psi:[0,\varrho)\times U\longrightarrow \overline{\Omega}$ is a $C^1$ diffeomorphism onto its image.

\medskip
At the same time, since $\partial\Omega$ is compact, we can find finitely many points $x_i\in \partial\Omega$, 
corresponding sets $W_i\subset \R^n$; $U_i\subset\R^{n-1}$ 
and functions $\gamma_i\in C^1(U_i)$ $( i=1,\cdots, m)$, such that $\partial\Omega\subset \bigcup_{i=1}^{m} W_i$ and
$$
\gamma_i:U_i\longrightarrow\partial\Omega\cap W_i,
$$
moreover, there exists $\varrho_i> 0$, $(i=1,\ldots,m)$, such that, $\psi_i:[0,\varrho_i)\times U_i\longrightarrow \overline{\Omega}$ 
is a $C^1$ diffeomorphism onto its image, where $\psi_i(\tau,y):=\Psi(\tau,\gamma_i(y))$.

\medskip
Finally, we consider $\varrho= \min\{\varrho_i;\,i=1,\cdots, m\}$. Define $V_i:=\Psi([0,\varrho)\times \gamma_i(U_i))$ and
$h_i:V_i \to [0,\varrho)$, as follow
$$
h_i(x):=\pi_1\circ \psi_i^{-1}(x),\;\;\;\;x\in V_i,
$$
where $\pi_1:\R\times\R^{n-1}\to\R$, given by $\pi_1(a,b)=a$. In particular, if $x\in \partial\Omega_\tau\cap V_i$, we obtain that  $h_i(x)=\tau$.

\end{proof}
As a consequence of the above Lemma, we define the \emph{level set function associated with the $C^1$-deformation} $\Psi$, that is to say, the function 
%$$
%h: \bigcup_{i=1}^{m} V_i\to [0,\varrho)
%$$
$$
h: \overline{\Omega}\to [0,\varrho)
$$
by setting $h(x)=h_i(x)$, if $x\in V_i$ and $h(x)=\varrho$, for $x\in \overline{\Omega}\setminus \bigcup_{i=1}^{m} V_i$ , which is clearly a $C^1$ function. Moreover,
%since the function $\psi_i$ is a $C^1$ diffeomorphism, 
we have that $\nabla h(x)\neq 0$ for all $x\in \bigcup_{i=1}^{m} V_i$, and also  $\nabla h(r)$ is parallel to $\nu_\tau(r)$ on $\partial\Omega_\tau$.

%%%%%%%%%%%%%%%%%%%%%%%%%%%%%%%%%%%%%%%%%%%%%%%%%%%%%%%%%%%%%%%%%%%%%%
%%%%%%%%%%%%%%%%%%%%%%%%%%%%%%%%%%%%%%%%%%%%%%%%%%%%%%%%%%%%%%%%%%%%%

\medskip
To follow, we define some auxiliary functions,
which are important to show existence of solutions of the IBPV \eqref{FTPME}. 
\begin{enumerate}
\item[$(1)$] Without loss of generality, we may assume $\varrho=1$ (define in lemma \ref{levelfun}), and define
$$
   s(x):= \left \{ 
         \begin{aligned}
           h(x), &\quad \text{if $x \in \Omega$},
            \\[5pt]
           - h(x),& \quad \text{if $x \in \Bbb{R}^n \setminus \Omega$}.           
         \end{aligned}\right.
$$
\item[$(2)$] For each $k \in \Bbb{N}$, and all $x\in \R^n$,
define $\xi_k$ by
\begin{equation}
\xi_{k}(x):= 1-{\exp} \left( -k \ s(x)\right).\label{eq:xi}
\end{equation}
\end{enumerate}

%%%%%%%%%%%%%%%%%%%%%%%%%%%%%%%%%%%%%%%%%%%%%%%%%%%%%%%%%%%%%%%%%%%%%%
%%%%%%%%%%%%%%%%%%%%%%%%%%%%%%%%%%%%%%%%%%%%%%%%%%%%%%%%%%%%%%%%%%%%%%%
%%%%%%%%%%%%%%%%%%%%%%%%%%%%%%%%%%%%%%%%%%%%%%%%%%%%%%%%%%%%%%%%%%%%%%
%%%%%%%%%%%%%%%%%%%%%%%%%%%%%%%%%%%%%%%%%%%%%%%%%%%%%%%%%%%%%%%%%%%%%
%\medskip
%Now, we define a level set function $h$ associated with the deformation $\Psi_\tau$.
%For $\delta> 0$ sufficiently small we define
%$$
%   h(x):= \left \{ 
%         \begin{aligned}
%           \min\{\tau,\delta\}, &\quad \text{if $x \in \Omega$},
%            \\[5pt]
%           - \min\{\tau,\delta\},& \quad \text{if $x \in \Bbb{R}^n \setminus \Omega%$}.           
%         \end{aligned}\right.
%$$
%Then, we define $\xi_k$ for each $k \in \Bbb{N}$, and all $x\in \R^n$, by
%\begin{equation}
%\xi_{k}(x):= 1-{\exp} \left( -k \ h(x) \right).\label{eq:xi}
%\end{equation}

\begin{lemma}
\label{Lemma:xik}
Let $\Omega \subset \R^n$ be an open bounded domain with $C^2$ boundary.
Then, it follows that:
\begin{enumerate}
\item[$(1)$] The function $s(x)$ is Lipschitz continuous in $\Bbb{R}^n$, and $C^1$ on the
closure of $\left\lbrace x \in \Bbb{R}^n: |s(x)| < \delta \right\rbrace$.

\item[$(2)$] The sequence $\{\xi_{k}\}$ satisfies
\begin{equation}
 \lim_{k\to +\infty}\int_{\Omega}|1-\xi_k|^2dx=0,
  \quad \text{and} \quad
  \lim_{k\to +\infty}\int_{\Omega}|\nabla\xi_k|^2dx=0.
  \label{eq:limvarep}
\end{equation}
\end{enumerate}
\end{lemma}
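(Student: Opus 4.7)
For part (1), the regularity of $s$ essentially inherits from the construction of $h$ in Lemma \ref{levelfun}: on each chart $V_i$ one has $h=\pi_1\circ\psi_i^{-1}$ with $\psi_i$ a $C^1$-diffeomorphism, so $h\in C^1(V_i)$, and on $\overline{\Omega}\setminus\bigcup_iV_i$ the value $h\equiv 1$ is constant. Since $\partial\Omega$ is $C^2$, I would extend $s$ to $\mathbb{R}^n\setminus\Omega$ by performing the analogous construction with an outward deformation (e.g., $\Psi(\tau,r)=r+\epsilon\tau\nu(r)$), producing a companion $\widetilde h$ in an exterior tubular neighborhood, and setting $s=-\widetilde h$ there. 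Both pieces vanish on $\partial\Omega$, so $s$ is continuous on $\mathbb{R}^n$, and the uniform bounds on $|\nabla h|$ and $|\nabla\widetilde h|$ together with compactness of $\partial\Omega$ yield global Lipschitz continuity. For the $C^1$ property on the closure of $\{|s|<\delta\}$, I would choose $\delta$ small enough that this set lies inside the two tubular neighborhoods and arrange the two deformations so their normal derivatives at $\partial\Omega$ match; the cleanest implementation is to identify $s$ near $\partial\Omega$ with the signed distance function to $\partial\Omega$, which is $C^2$ in a neighborhood of $\partial\Omega$ because $\partial\Omega\in C^2$. The bookkeeping required to glue the interior and exterior pieces across $\partial\Omega$ in a $C^1$ fashion is the main obstacle here.

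For the first limit in part (2), the function $1-\xi_k(x)=\exp(-ks(x))$ is bounded by $1$ on $\Omega$ and converges to $0$ for a.e.\ $x\in\Omega$ (since $s=h\geq 0$ on $\Omega$ and the set $\{h=0\}\cap\overline{\Omega}=\partial\Omega$ has Lebesgue measure zero), so dominated convergence yields $\int_\Omega|1-\xi_k|^2\,dx\to 0$.

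For the second limit, a direct computation gives $|\nabla\xi_k|^2=k^2 e^{-2ks}|\nabla s|^2$, and the natural tool is the coarea formula applied to $h=s|_\Omega$, which converts the volume integral to
$$\int_0^1 k^2 e^{-2k\tau}\Big(\int_{\partial\Omega_\tau}|\nabla h|\,d\mathcal{H}^{n-1}\Big)\,d\tau.$$
The inner surface integral $G(\tau):=\int_{\partial\Omega_\tau}|\nabla h|\,d\mathcal{H}^{n-1}$ is continuous in $\tau$ and bounded (by the smoothness of the foliation $\partial\Omega_\tau$ in $\tau$ and the $L^\infty$ bound on $|\nabla h|$), so the problem reduces to a one-variable asymptotic. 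The delicate step — and the step I expect to be the main technical obstacle — is establishing vanishing of this one-dimensional integral as $k\to\infty$: the rescaling $u=2k\tau$ concentrates the exponential weight at $\tau=0$, so one must exploit refined information about $G$ near the boundary (in particular the precise behavior of $\partial\Omega_\tau$ and of $\nabla h$ as $\tau\to 0^+$) to produce the required cancellation. This is where I would concentrate the bulk of the technical effort.
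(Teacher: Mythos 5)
Part (1) and the first limit in (2) are fine (modulo the small caveat that replacing $s$ by the signed distance changes the definition: $s$ is tied to the chosen deformation $\Psi$ through $h$, so you should argue $C^1$-regularity for $h$ itself, not substitute another function). The genuine problem is the second limit, exactly at the step you flagged. Your coarea reduction to $\int_0^1 k^2 e^{-2k\tau} G(\tau)\,d\tau$ with $G(\tau)=\int_{\partial\Omega_\tau}|\nabla h|\,d\mathcal{H}^{n-1}$ is correct, but the ``cancellation'' you hope to extract from the behaviour of $G$ near $\tau=0$ does not exist: the integrand is nonnegative, and by the paper's own construction $\nabla h$ is nonvanishing and parallel to $\nu_\tau$ near $\partial\Omega$, so $G(\tau)\ge c>0$ for all small $\tau$. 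Hence $\int_0^1 k^2 e^{-2k\tau}G(\tau)\,d\tau \ \ge\ c\,k^2\int_0^{\tau_0}e^{-2k\tau}\,d\tau \ =\ \tfrac{c}{2}\,k\,(1-e^{-2k\tau_0})\ \longrightarrow\ \infty$, mirroring the one-dimensional model $\Omega=(0,1)$, $s(x)=x$, where $\int_0^1 k^2e^{-2kx}dx\sim k/2$. So along your route $\int_\Omega|\nabla\xi_k|^2dx$ grows linearly in $k$ rather than tending to $0$; no refinement of the asymptotics of $G$ can repair this, because $G(0)>0$. The step you describe as the main technical obstacle is a step that fails, so the proposal does not prove the statement.

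For comparison, the paper offers no argument at all here: its ``proof'' is a citation to Section 2.8 of M\'alek--Ne\v{c}as--Rokyta--R\r{u}\v{z}i\v{c}ka, whose boundary-layer sequences enjoy weaker properties (e.g.\ boundedness of $\int_\Omega|\nabla\xi_k|\,dx$, with $|\nabla\xi_k|\,dx$ concentrating on $\partial\Omega$), not $\|\nabla\xi_k\|_{L^2(\Omega)}\to0$. What is actually needed where the lemma is invoked (the passage $k\to\infty$ in the second energy estimate, where $\nabla\xi_k$ is always paired with $\mathcal{K}u$) is a weighted statement such as $\int_\Omega|\nabla\xi_k|^2\,|\mathcal{K}u|^2\,dx\to0$; this does hold, since $\mathcal{K}u\in H^1_0(\Omega)$ allows Hardy's inequality, $k^2 d(x)^2e^{-2kc\,d(x)}$ is bounded and tends to $0$ pointwise (with $d(x)$ the distance to $\partial\Omega$), and dominated convergence applies. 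If you want a correct and useful version of part (2), prove that weighted limit (or a suitably normalized statement), not the unweighted $L^2$ limit as written.
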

\begin{proof}
This Lemma is an extension of the result obtain in section 2.8 of M\'alek, Necas, Rokyta and Ruzicka \cite{Malek}, p. 129.
\end{proof}

To finish this section, let us consider the following   
\begin{enumerate}
\item[(1)] Let a non-negative function $\gamma \in C^1_c(\R)$, with support contained in $[0,1]$, 
such that, $\int \gamma(t) dt= 1$. Then, we consider the sequences $\{\delta_j\}_{j\in \Bbb{N}}$,
and $\{H_j\}_{j\in \Bbb{N}}$, defined by
$$
   \delta_j(t):= j \ \gamma(j t), \quad H_j(t):= \int_0^t \delta_j(s) \ ds.
$$
Thus, for each $j \geq 1$, $H'_j(t)= \delta_j(t)$, and clearly the sequence $\{H'_j\}$  
converges as $j \to \infty$ to the Dirac $\delta$-measure in $\mathcal{D}'(\Bbb{R})$,
while the sequence $\{H_j\}$  converges pointwise to the Heaviside function 
$$
   H(t)= \left \{
   \begin{aligned}
   1, &\quad \text{if $t \geq 0$},
   \\
   0, &\quad \text{if $t< 0$}.
   \end{aligned}
   \right.  
$$

\item[(2)] 
\medskip 
Let $\Psi$  a $C^1$-admissible deformation  and   $\partial \Omega$ is $C^2$. Then for any point $x\in\partial\Omega$ 
there exists a neighbourhood $W$ of $x$ in $\R^n$, an open set $U\subset\R^{n-1}$ and a $C^2$ mapping 
$\zeta:U \to \partial\Omega\cap W$, which is a $C^1-$diffeomorphism. Moreover, it satisfies
\begin{equation}
\lim_{\tau\to 0} J[\Psi_\tau\circ \zeta]=J[\zeta]\;\;\;\;in\;\;C(U),\nonumber
\end{equation}
where $J[\cdot]$ is the Jacobian. Furthermore $J[\Psi_\tau]$  defined by 
\begin{equation}
J[\Psi_\tau](r):=\dfrac{J[\Psi_\tau\circ\zeta](\zeta^{-1}(r))}{J[\zeta](\zeta^{-1}(r))},\label{eq:Jpsi}
\end{equation}
satisfies $J[\Psi_\tau]\to 1$ uniformly as $\tau \to 0$.
\end{enumerate}
%%%%%%%%%%%%%%%%%%%%%%%%%%%%
%%%%%%%%%%%%%%%%%%%%%%%%%%%%
\section{Initial Boundary Value Problem} 
\label{SOLVAB}
%%%%%%%%%%%%%%%%%%%%%%%%%%%%
%%%%%%%%%%%%%%%%%%%%%%%%%%%%
Here we give a definition, which stablish how the boundary condition will be consider for the equation \eqref{FTPME}. We also state an equivalent definition of weak solution 
( Equivalent Theorem \ref{THEQUIVA} ). 

\subsection{Definition of weak solution}
We seek for a suitable (weak) solution $u(t,x)$ defined in $\Omega_T$, in this way the next definition tells us in which sense $u(t,x)$ is a solution to the IBVP
\eqref{FTPME}.
\begin{definition}
\label{DEFSOL}
Given an initial data $u_0 \in L^\infty(\Omega)$ and $0< s <1$, a function 
$$
    u\in L^2\left((0,T); D\big(\mathcal{L}^{(1-s)/2}\big)\right) \cap L^\infty(\Omega_T)
$$
is called a weak solution of  the IBVP \eqref{FTPME}, 
when  $u(t,x)$ satisfies:
\begin{enumerate}
\item[$1.$] The integral equation: For each  $\phi \in C^{\infty}_c(\Omega_T)$
\begin{equation}
    \iint_{\Omega_T} u(\partial_t\phi-A(x)\nabla \mathcal{K}u\cdot \nabla \phi) \ dxdt= 0.
\label{eq:solfrac}
\end{equation}
\item[$2.$] The initial condition: For all $\zeta\in L^1(\Omega)$
\begin{equation}
   \ess \lim_{t \to 0^+} \int_\Omega u(t,x) \zeta(x)\, dx=  \int_\Omega  u_0(x) \zeta(x)\, dx.\label{eq:initalcondition}
\end{equation} 
\item[$3.$] The boundary condition: 
For each $\gamma \in C^{\infty}_c((0,T)\times\R^n)$ 
\begin{equation}
\label{eq:boundcondition}
\hspace{-0.7cm}\ess\!\!\lim_{\tau \to 0^+}\int^T_0\!\!\!\int_{\partial\Omega_{\tau}}\!\!\!u(t,r)A(r)\nabla\mathcal{K}u(t,r)
    \cdot\nu_\tau(r) \gamma(t,\Psi^{-1}_\tau(r))\,d\clg{H}^{n-1}(r)dt= 0.
\end{equation}
%where $\Psi_\tau(r)$ is a $C^1$-deformation, 
%and $\nu_\tau$ is the unit outward normal field in $\partial \Omega_\tau$. 
\end{enumerate}
\end{definition}
%%%%%%%%%%%%%%%%%%%%%%%%%%%%%%%%%%%%%%%%%%
%\begin{remark}
%Note that in \eqref{eq:solfrac}, the test function $\phi\in C^{\infty}_c(\Omega_T)$ does not see the boundary of $\Omega$, so we do not have  information about the behaviour of the solution in the boundary. For this reason we incorporate the condition \eqref{eq:boundcondition}.
%, which expresses the sense of the boundary condition of \eqref{FTPME}.
%\end{remark}
%%%%%%%%%%%%%%%%%%%%%%%%%%%%%%%%%%%%%5
%\begin{remark}
%The boundary condition \eqref{eq:boundcondition} was motivated by the following fact. If we multiply  by the test function which see the boundary  $\phi\in C^{\infty}_c((0,T)\times\R^n)$ in the equation \eqref{FTPME} and integrate by part, we get
%$$
%\begin{aligned}
%    \iint_{\Omega_T} u(t,x)\partial_t\phi(t,x) \ dxdt \ -&\iint_{\Omega_T}u(t,x)A(x)\nabla \mathcal{K}u(t,x)\cdot  \nabla \phi(t,x) \ dxdt \ +
%\\[5pt]
%\int_0^T\int_{\partial\Omega} &u(t,r)A(r)
%\nabla \mathcal{K}u(t,r) \cdot \nu(r)\phi(t,r) \,d\clg{H}^{n-1}(r)dt=0,
%\end{aligned}
%$$
%since $u=0$, we expect that
%\begin{equation}
%\int_0^T\int_{\partial\Omega} u(t,r)A(r)
%\nabla \mathcal{K}u(t,r) \cdot \nu(r)\phi(t,r) \,d\clg{H}^{n-1}(r)dt=0.\label{eqinspira}
%\end{equation}
%But \eqref{eqinspira} is not necessarily true, because the trace of $\nabla \mathcal{K}u(t,r) \cdot \nu(r)$ could not exist. For this reason we use the $C^1$-deformation $\Psi$ then we obtain \eqref{eq:boundcondition}.
%\end{remark}
%%%%%%%%%%%%%%%%%%%%%%%%%%%%%%%%%%%%%%%%%%%%%%%%%%%%%%%%%%%
\begin{remark}
\label{REMMEX}
Given $u \in L^2\left((0,T);D\big(\mathcal{L}^{(1-s)/2}\big)\right)$
the limit in the left hand side of \eqref{eq:boundcondition}, a priori,
does not necessarily exist. Indeed, the existence of trace for 
$u$ and $\nabla \mathcal{K} u \cdot \nu$ are mutually exclusive. 
For instance, if $0<s<1/2$ then from Proposition \ref{represendoDominio}, it follows that 
$u \in L^2\left((0,T);H^{1-s}_0(\Omega)\right)$, which implies that $u$ has trace on $\partial\Omega$, moreover $u=0$ on $(0,T)\times\partial\Omega$, contrarily 
%On the other hand, 
$\mathcal{K}u\in L^2\left((0,T);H^{1+s}(\Omega)\cap H^1_0(\Omega)\right)$,
which means that, $\nabla\mathcal{K}u \cdot \nu$ does not have trace on $\partial\Omega$.
Vice versa result for $1/2 \leq s < 1$. 
\end{remark}

\medskip
However, if $u\in L^2\left((0,T); D\big(\mathcal{L}^{(1-s)/2}\big)\right) \cap L^\infty(\Omega_T)$ 
and satisfies \eqref{eq:solfrac}, then the essential limit in \eqref{eq:boundcondition} exist, in particular the boundary condition
 makes sense. Analogously, the initial conditional 
\eqref{eq:initalcondition}. 
%To show that, we first consider the following lemmas.
\begin{lemma}
\label{exisintegral}
Let $u\in L^2\left((0,T);D\big(\mathcal{L}^{(1-s)/2}\big)\right)\cap L^{\infty}(\Omega_T)$, with $s \in (0,1)$.
Then, for each function $\gamma\in C^{\infty}_c((0,T)\times\R^n)$ and any $C^1$-deformation $\Psi$ 
$$
   \int^T_0 \!\!\! \int_{\partial\Omega_\tau} u(t,r)A(r)\nabla \mathcal{K}u(t,r)\cdot \nu_\tau(r) \ \gamma(t,\Psi^{-1}_\tau(r))\,d\clg{H}^{n-1}(r)dt
$$
exists for a.e. $\tau>0$ small enough.
\end{lemma}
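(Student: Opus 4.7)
The central difficulty is that both the domain of integration $\partial\Omega_\tau$ and the composition $\gamma(t,\Psi_\tau^{-1}(r))$ depend on $\tau$. The idea is to absorb both of these dependencies into a single integrand on a fixed tubular neighbourhood of $\partial\Omega$ using the level set function $h$ supplied by Lemma \ref{levelfun}, and then translate the volume integral into the family of surface integrals via the coarea formula, with Fubini providing the a.e. statement.

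First I would obtain $\nabla\mathcal{K}u\in L^2(\Omega_T)^n$ from the hypothesis. Writing
$$\mathcal{L}^{1/2}\mathcal{K}u=\mathcal{L}^{-s/2}\mathcal{L}^{(1-s)/2}u=\mathcal{H}\,\mathcal{L}^{(1-s)/2}u$$
and using that $\mathcal{H}:L^2(\Omega)\to L^2(\Omega)$ is bounded (Proposition \ref{THMCARAC}), the norm equivalence \eqref{normequi} (i.e.\ $D(\mathcal{L}^{1/2})=H^1_0(\Omega)$) gives $\mathcal{K}u(t,\cdot)\in H^1_0(\Omega)$ for a.e.\ $t$ together with an $L^2$-estimate of the gradient. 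Next, shrinking $\varrho$ from Lemma \ref{levelfun} if necessary so that $\partial\Omega_\tau\subset V:=\bigcup_{i=1}^m V_i$ for every $\tau\in[0,\varrho)$, recall that $(\tau,r_0)\mapsto\Psi(\tau,r_0)$ is a $C^1$-diffeomorphism from $[0,\varrho)\times\partial\Omega$ onto $V$; its inverse defines a $C^1$ projection $P:V\to\partial\Omega$ with $P(r)=\Psi_\tau^{-1}(r)$ whenever $r\in\partial\Omega_\tau$. Therefore
$$\tilde\gamma(t,x):=\gamma(t,P(x)),\qquad (t,x)\in(0,T)\times V,$$
is bounded, of class $C^1$, has compact support in $(0,T)\times V$, and coincides with $\gamma(t,\Psi_\tau^{-1}(r))$ on each $\partial\Omega_\tau$.

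Finally, introduce
$$G(t,x):=u(t,x)\,A(x)\nabla\mathcal{K}u(t,x)\cdot\frac{\nabla h(x)}{|\nabla h(x)|}\,\tilde\gamma(t,x),\qquad (t,x)\in(0,T)\times V.$$
Since $u\in L^\infty(\Omega_T)$, $\nabla\mathcal{K}u\in L^2(\Omega_T)^n$, $A$ is bounded, and $\tilde\gamma$ has compact support, one has $G\in L^2((0,T)\times V)\subset L^1((0,T)\times V)$; as $|\nabla h|$ is bounded on $V$, also $|G|\,|\nabla h|\in L^1$. The coarea formula applied to the Lipschitz function $h$ together with Fubini then gives
$$\int_0^T\!\!\!\int_V |G|\,|\nabla h|\,dx\,dt=\int_0^\varrho\!\!\int_0^T\!\!\!\int_{\partial\Omega_\tau}|G(t,r)|\,d\clg{H}^{n-1}(r)\,dt\,d\tau<\infty,$$
so the inner double integral is finite for a.e. $\tau\in(0,\varrho)$. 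Since on $\partial\Omega_\tau$ the modulus of $G$ agrees with $|u\,A\nabla\mathcal{K}u\cdot\nu_\tau\,\gamma(t,\Psi_\tau^{-1}(\cdot))|$ (the vectors $\nabla h/|\nabla h|$ and $\nu_\tau$ being parallel by Lemma \ref{levelfun}), the integral in the statement is absolutely convergent for a.e.\ small $\tau>0$. The main technical point is the regularity of the projection $P$ (equivalently, of $x\mapsto\Psi^{-1}_{h(x)}(x)$); this rests precisely on the fact that the admissible deformation $\Psi$ is a $C^1$-diffeomorphism on a tubular neighbourhood of $\partial\Omega$, ensured by item (2) of Definition \ref{AdmDefo} and the inverse function theorem argument already used in Lemma \ref{levelfun}.
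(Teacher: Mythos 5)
Your argument is correct and follows essentially the same route as the paper: pull everything back to a fixed tubular neighbourhood using the level set function $h$ from Lemma \ref{levelfun}, check the integrand is in $L^1$ there (using $u\in L^\infty$ and $\nabla\mathcal{K}u\in L^2$), and apply the coarea formula so that finiteness of the surface integral for a.e.\ $\tau$ follows. Your additional care about why $\nabla\mathcal{K}u\in \big(L^2(\Omega_T)\big)^n$ and why $x\mapsto\gamma(t,\Psi^{-1}_{h(x)}(x))$ is well defined merely fills in details the paper leaves implicit.
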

\begin{proof}
First, due to $u\in L^2\left((0,T);D\big((-\Delta_D)^{(1-s)/2}\big)\right)$, the integral
$$
    \int_0^T\int_{Im(\Psi)} u(t,x) \nabla \mathcal{K}u(t,x) \cdot \nabla h(x) \  \gamma(t,\Psi_{h(x)}^{-1}(x)) \ dxdt
$$
exists, where $h$ is the level set function associated with the deformation $\Psi_\tau$, which is defined in Section \ref{BN}. 
Hence applying the Coarea Formula for the function $h$, we obtain
\begin{equation}
\begin{aligned}
& \int_0^T\int_{Im(\Psi)} u(t,x) \nabla \mathcal{K}u(t,x) \cdot \nabla h(x) \  \gamma(t,\Psi_{h(x)}^{-1}(x)) \ dxdt
\\[5pt]
 &=\int^1_0\int^T_0\int_{\partial\Omega_\tau}u(t,r)\nabla\mathcal{K}u(t,r)\cdot\nu_\tau(r) \gamma(t,\Psi_{\tau}^{-1}(r))\,d\clg{H}^{n-1}(r)dt\,d\tau.
 \end{aligned}\label{eq:exisintegral}
\end{equation}
Thus, we obtain from \eqref{eq:exisintegral} that
\begin{equation}
\int^T_0\int_{\partial\Omega_\tau}u(t,r)\nabla\mathcal{K}u(t,r)\cdot\nu_\tau(r) \gamma(t,\Psi_{\tau}^{-1}(r))\,d\clg{H}^{n-1}(r)dt
\label{eq:exisintegral1}
\end{equation}
exists for a.e. $\tau\in(0,1)$ and each $\gamma\in C^{\infty}_c((0,T)\times\R^n)$.
\end{proof}
%
%\begin{lemma}
%\label{equiboundary}
%Let $u\in L^2\left((0,T);D\big(\mathcal{L}^{(1-s)/2}\big)\right)\cap L^{\infty}(\Omega_T)$, with $s \in (0,1)$.
%If for each function $\gamma\in L^2((0,T)\times\partial\Omega)$ and any $C^1$-deformation $\Psi$ 
%\begin{equation}
%  \ess\!\!\lim_{\tau \to 0^+}\int_0^{T}\!\!\!\int_{\partial\Omega_\tau} u(t,r)A(r)\nabla\mathcal{K}u(t,r)
%  \cdot\nu_\tau(r)\gamma(t,\Psi^{-1}_\tau(r))\,d\clg{H}^{n-1}(r)dt\label{eq:esslimequi}
%\end{equation}
%exists, then
%$$
%    \ess\!\!\lim_{\tau \to 0^+}\int^T_0
%    \!\!\!\int_{\partial\Omega} u(t,\Psi_{\tau}(r))A(\Psi_{\tau}(r))\nabla \mathcal{K}u(t,\Psi_{\tau}(r))
%    \cdot\nu_\tau(\Psi_{\tau}(r))\gamma(t,r)\,d\clg{H}^{n-1}(r)dt
%$$
%exists and both limits are equal.
%\end{lemma}
%\begin{proof}
%See \cite{Hua}
%\end{proof}
%
%%%%%%%%%%%%
%

\medskip
To follow, we define some auxiliary set, which are important to show that the Definition \ref{DEFSOL} makes sense. Let $u\in L^2\left((0,T);D\big(\mathcal{L}^{(1-s)/2}\big)\right)\cap L^{\infty}(\Omega_T)$ be a function satisfying \eqref{eq:solfrac}, then consider the following sets:

\begin{enumerate}
\item[(1)] Let $\clg{E}$ be a countable dense subset of $C^{1}_c(\Omega)$.
For each $\zeta \in \clg{E}$, we define the set of full measure in $(0,T)$ by
$$
 \text{$E_\zeta:= \Big\{ t \in(0,T) / \, t $ is a Lebesgue point of $ I(t)= \int_{\Omega}u(t,x)\zeta(x)dx \Big\}$},
$$
and consider
$$
E:=\bigcap_{\zeta\,\in\, \clg{E}}E_\zeta,
$$
which is a set of full measure in $(0,T)$.
\item[(2)]  Let $\clg{F}$ be a countable dense subset of $C^{\infty}_c((0,T)\times\R^n)$.
For each $\gamma \in \clg{F}$, we define the set of full measure in $(0,1)$ by
$$
 \text{$F_\gamma= \Big \{ \tau\in(0,1) / \tau$ is a Lebesgue point of $J(\tau) \Big \}$}, 
$$
where
$$
J(\tau)=\int_0^{T}\int_{\partial\Omega_\tau}u(t,r)A(r)\nabla\mathcal{K}u(t,r)\cdot\nu_\tau(r)\gamma(t,\Psi_\tau^{-1}(r))\,d\clg{H}^{n-1}(r)dt,
$$
which makes sense thanks to Lemma \ref{exisintegral}. 
Moreover, we consider
$$
F:=\bigcap_{\gamma\,\in\, \clg{F}}F_\gamma,
$$
which is also a set of full measure in $(0,1)$. For more details see \cite{Hua}.
\end{enumerate}

\medskip
The next theorem ensures the existence of the essential limit \eqref{eq:initalcondition} and the boundary condition  \eqref{eq:boundcondition}
\begin{theorem}
\label{PROP10}
Let $u\in L^2\left((0,T);D\big(\mathcal{L}^{(1-s)/2}\big)\right)\cap L^{\infty}(\Omega_T)$ 
and assume that $u$ satisfies \eqref{eq:solfrac}, then:
\begin{enumerate}
\item[$(1)$] There exists a function $\bar{u}\in L^{\infty}(\Omega)$, such that 
% there exists a set $E\subset \R_+$ of full measure, which satisfies
\begin{equation}
\label{existinitial}
   \ess\!\! \lim_{t \to 0^+} \int_{\Omega} u(t,x)\zeta(x) dx=\int_{\Omega} \bar{u}(x)\zeta(x) dx,
\end{equation}
for each $\zeta \in L^1(\Omega)$.

\item[$(2)$] For each $\gamma \in C^{\infty}_c((0,T)\times\R^n)$, and any $C^1$-deformation $\Psi$, the
%exist a set $F\subset \R_+$ of full measure, such that 
\begin{equation}
\label{existboundary}
 \hspace{-0,4cm}\ess \!\!\lim_{\tau \to 0^+} \int^T_0\int_{\partial\Omega_{\tau}}u(t,r)A(r)\nabla\mathcal{K}u(t,r)
    \cdot\nu_\tau(r) \gamma(t,\Psi^{-1}_\tau(r))\,d\clg{H}^{n-1}(r)dt,
\end{equation}
exists. 
\end{enumerate}
\end{theorem}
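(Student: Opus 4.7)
Both assertions follow a common pattern: test the integral identity \eqref{eq:solfrac} against a family of functions designed to isolate the quantity whose limit is sought, then exploit the regularity $u\in L^2((0,T);D(\mathcal{L}^{(1-s)/2}))\cap L^\infty(\Omega_T)$ together with the Coarea formula applied to the level set function $h$ from Lemma \ref{levelfun}.

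\emph{Initial trace (part (1)).} Fix $\zeta\in\mathcal{E}$ and $\alpha\in C^\infty_c(0,T)$, and test \eqref{eq:solfrac} with $\phi(t,x)=\alpha(t)\zeta(x)$. Setting $I_\zeta(t):=\int_\Omega u(t,x)\zeta(x)\,dx$ and $G_\zeta(t):=\int_\Omega A(x)\nabla\mathcal{K}u(t,\cdot)\cdot\nabla\zeta\,dx$, this yields $\int_0^T\alpha'(t)I_\zeta(t)\,dt=\int_0^T\alpha(t)G_\zeta(t)\,dt$. Integrating $G_\zeta$ by parts and using the self-adjointness of $\mathcal{L}^{(1-s)/2}$ from Proposition \ref{THMCARAC}(2) identifies $G_\zeta(t)=\langle\mathcal{L}^{(1-s)/2}u(t,\cdot),\mathcal{L}^{(1-s)/2}\zeta\rangle_{L^2}$, and Cauchy--Schwarz gives $|G_\zeta(t)|\leq\|\mathcal{L}^{(1-s)/2}u(t,\cdot)\|_{L^2}\|\mathcal{L}^{(1-s)/2}\zeta\|_{L^2}\in L^2(0,T)$. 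Hence $I_\zeta\in W^{1,2}(0,T)\hookrightarrow C([0,T])$, so the essential limit $\bar I(\zeta):=\lim_{t\to 0^+}I_\zeta(t)$ exists along $E_\zeta$. The linear functional $\zeta\mapsto\bar I(\zeta)$ satisfies $|\bar I(\zeta)|\leq\|u\|_{L^\infty(\Omega_T)}\|\zeta\|_{L^1(\Omega)}$; density of $\mathcal{E}$ in $L^1(\Omega)$ and the Riesz representation theorem produce $\bar u\in L^\infty(\Omega)$ with $\bar I(\zeta)=\int_\Omega\bar u\,\zeta\,dx$, and a $3\varepsilon$ argument upgrades \eqref{existinitial} to every $\zeta\in L^1(\Omega)$.

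\emph{Boundary trace (part (2)).} Put $\tilde\gamma(t,x):=\gamma(t,\Psi_{h(x)}^{-1}(x))$ for $\gamma\in\mathcal{F}$. For Lebesgue points $0<\tau_2<\tau_1<\varrho$ of $J$ in $F_\gamma$ and a smooth approximation $\eta_\varepsilon\in C^\infty_c(0,\varrho)$ of $\chi_{[\tau_2,\tau_1]}$, test \eqref{eq:solfrac} with $\phi_\varepsilon(t,x):=\tilde\gamma(t,x)\,\eta_\varepsilon(h(x))$, which lies in $C^1_c(\Omega_T)$ and is therefore admissible by a standard density argument. The portion of $\nabla\phi_\varepsilon$ proportional to $\eta_\varepsilon'(h)\nabla h$ is rewritten, using the Coarea formula and the orientation $\nabla h=-|\nabla h|\nu_\tau$ on $\partial\Omega_\tau$, as $-\int_0^\varrho\eta_\varepsilon'(\tau)J(\tau)\,d\tau$. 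Letting $\varepsilon\to 0^+$ and using that $\tau_1,\tau_2$ are Lebesgue points of $J$ yields
$$J(\tau_2)-J(\tau_1)=\int_0^T\!\!\int_{\{\tau_2\leq h\leq\tau_1\}}\!u\,A(x)\nabla\mathcal{K}u\cdot\nabla_x\tilde\gamma\,dx\,dt-\int_0^T\!\!\int_{\{\tau_2\leq h\leq\tau_1\}}\!u\,\partial_t\tilde\gamma\,dx\,dt.$$
The second integral is bounded by $\|u\|_\infty\|\partial_t\tilde\gamma\|_\infty T|\{\tau_2\leq h\leq\tau_1\}|$, while Cauchy--Schwarz combined with $\nabla\mathcal{K}u\in L^2(\Omega_T)$ (via Proposition \ref{proKuHu} and the embedding $D(\mathcal{L}^{(1-s)/2})\hookrightarrow D(\mathcal{L}^{1/2-s})$ when $s\leq 1/2$, trivially otherwise) controls the first by $C\|u\|_\infty\|\nabla_x\tilde\gamma\|_\infty T^{1/2}|\{\tau_2\leq h\leq\tau_1\}|^{1/2}$. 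Both vanish as $\tau_1,\tau_2\to 0^+$, so $J$ is Cauchy along $F_\gamma$ and the essential limit exists for each $\gamma\in\mathcal{F}$. A uniform bound $|J(\tau)|\lesssim\|u\|_\infty\|\gamma\|_\infty$, obtained by the same Coarea argument over a thin strip adjacent to $\tau$, together with density of $\mathcal{F}$ in $C^\infty_c((0,T)\times\R^n)$, propagates the conclusion to every admissible $\gamma$.

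\emph{Main obstacle.} The delicate step is the Coarea-plus-approximation manipulation in part (2): one has to check that $\phi_\varepsilon$ is an admissible test function (by enlarging the test-function class from $C^\infty_c(\Omega_T)$ to $C^1_c(\Omega_T)$ through mollification), pin down the correct sign arising from $\nabla h\parallel -\nu_\tau$, and justify passing $\varepsilon\to 0$ in the distributional pairing $\int\eta_\varepsilon'(\tau)J(\tau)\,d\tau$ using that $\tau_1,\tau_2\in F_\gamma$ are Lebesgue points of $J$. Once these are in place, the volume and Cauchy--Schwarz estimates close the argument; part (1) is comparatively routine.
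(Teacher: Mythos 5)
Your overall strategy coincides with the paper's. Part (1) is the paper's argument in a slightly different dress: you obtain the trace $\bar u$ from the $W^{1,2}(0,T)$-regularity of $I_\zeta(t)=\int_\Omega u(t,x)\zeta(x)\,dx$ (using $G_\zeta\in L^2(0,T)$, which is legitimate since $\zeta\in C^1_c(\Omega)\subset H^1_0(\Omega)\subset D\big(\mathcal{L}^{(1-s)/2}\big)$) together with $L^1$--$L^\infty$ duality, whereas the paper extracts $\bar u$ as a weak-$*$ limit of $u(t_m,\cdot)$ along Lebesgue points; both are fine. Part (2) is exactly the paper's mechanism: test \eqref{eq:solfrac} with $\gamma(t,\Psi^{-1}_{h(x)}(x))$ multiplied by an approximate characteristic function of $\{\tau_2\le h\le\tau_1\}$, apply the coarea formula for the level-set function $h$, use that $\tau_1,\tau_2$ are Lebesgue points of $J$, and deduce that $J(\tau_1)-J(\tau_2)$ equals a strip integral that vanishes as $\tau_1,\tau_2\to0^+$; your smooth cutoffs $\eta_\varepsilon$ play the role of the paper's $H_j$-differences, and the sign you assign to $\nabla h$ versus $\nu_\tau$ is immaterial for the conclusion. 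The bound $\|\nabla\mathcal{K}u\|_{L^2(\Omega_T)}\lesssim\|u\|_{L^2((0,T);D(\mathcal{L}^{(1-s)/2}))}$ that you invoke is also correct.

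The one step that does not hold as written is the propagation from $\gamma\in\mathcal{F}$ to all $\gamma\in C^\infty_c((0,T)\times\R^n)$ via the claimed uniform bound $|J(\tau)|\lesssim\|u\|_\infty\|\gamma\|_\infty$. No such pointwise-in-$\tau$ bound is available: $J(\tau)$ contains $\int_0^T\int_{\partial\Omega_\tau}|u|\,|A\nabla\mathcal{K}u\cdot\nu_\tau|\,d\clg{H}^{n-1}dt$, and the only control one has (Lemma \ref{exisintegral} via the coarea formula) is that this quantity is integrable in $\tau$, hence finite for a.e.\ $\tau$, with no uniform bound in terms of $\|u\|_\infty$; averaging over a thin strip adjacent to $\tau$ only bounds a mean of $J$, not $J(\tau)$ itself, so the $3\varepsilon$ argument as you set it up does not close. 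The repair is the paper's route: for $\tau_1,\tau_2$ in the $\gamma$-independent full-measure set where the above surface integral is finite, the identity ``$J(\tau_1)-J(\tau_2)=$ strip integral'' is linear in $\gamma$ and continuous under $C^1$-convergence of $\gamma$, with the admissible $\tau$-dependent constants $M(\tau_i)=\int_0^T\int_{\partial\Omega_{\tau_i}}|u|\,|A\nabla\mathcal{K}u\cdot\nu_{\tau_i}|\,d\clg{H}^{n-1}dt$; hence the identity itself extends to every $\gamma\in C^\infty_c((0,T)\times\R^n)$, and the existence of the essential limit then follows for general $\gamma$ exactly as it did for $\gamma\in\mathcal{F}$. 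With that adjustment your proof is the paper's proof.
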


\begin{proof}
1. To prove \eqref{existinitial}, let $\zeta \in \clg{E}$ and consider the set $E$ defined above. 
Then, for each $t \in E$, $\Vert u(t,\cdot) \Vert_{\infty}\leq C$. Thus we can find a
sequence $\{t_m\}$, $t_m \in E$, $m \in \Bbb{N}$, $t_m\to 0$ as $m \to \infty$ and a function $\bar{u} \in L^{\infty}(\Omega)$, such that
$u(t_m,\cdot) \to  \bar{u}(\cdot)$ weakly-* in $L^{\infty}(\Omega)$ as $m\to \infty$.

\medskip
If $c\in E$, then for large enough $m$, $t_m < c$. We fix such $t_m < c$ and set $\gamma_j(t)=H_j(t-t_m)-H_j(t-c)$, 
where the sequence $H_j(\cdot)$, $j\in \Bbb{N}$ is defined in Section \ref{BN}. Therefore, taking in \eqref{eq:solfrac}  
$\phi(t,x)=\gamma_j(t)\zeta(x)$, we have
$$
\iint_{\Omega_T}u(t,x)\gamma_j'(t)\zeta(x)dxdt=\iint_{\Omega_T}u(x)A(x)\nabla\mathcal{K}u\cdot\nabla\zeta(x)\,\gamma_j(t)\,dx\,dt.
$$
The above expression may be written as
\begin{equation}
\int_{0}^T\gamma_j'(t)\,I(t)dt=\int_0^T \gamma_j(t)\int_{\Omega}u(x)A(x)\nabla\mathcal{K}u\cdot\nabla\zeta(x)\,dx\,dt.\label{eq:existinitial1}
\end{equation}

\medskip
Then passing to the limit in (\ref{eq:existinitial1}) as $j \to \infty$, and 
taking into account that, $t_m,\;c$ are Lebesgue points of the function $I(t)$, 
also that $\gamma_j(t)$ converges pointwise 
to the characteristic function of the interval $[t_m,c)$, we obtain
$$
I(t_m)-I(c)=\int^{c}_{t_m}\int_{\Omega}u(x)A(x)\nabla\mathcal{K}u\cdot\nabla\zeta(x)\,dx\,dt,
$$
which implies in the limit as $m\to\infty$ that
$$
\int_{\Omega} \bar{u}(x)\zeta(x) dx-I(c)=\int^{c}_{0}\int_{\Omega}u(x)A(x)\nabla\mathcal{K}u(x)\cdot\nabla\zeta(x)\,dx\,dt,
$$
for all $c\in E$. Therefore, in view of the density of $\clg{E}$ in $L^1(\Omega)$, we have
$$
   \lim_{E \ni t \to 0} I(t)= \int_{\Omega} \bar{u}(x)\zeta(x) dx,
$$
for each $\zeta \in L^1(\Omega)$, which proves \eqref{existinitial}.

\medskip
2. Now, we show \eqref{existboundary}. 
Let $\gamma \in \clg{F}$, consider $F$, and define $S:= \Psi(F \times \partial \Omega)$. 
For $\tau_1,\,\tau_2 \in F$, with $\tau_1<\tau_2$, define $\zeta_j(\tau)= H_j(\tau-\tau_1)-H_j(\tau-\tau_2)$, $j\in\Bbb{N}$,
and take in \eqref{eq:solfrac} $\phi(t,x)$ defined by
$$
 \phi(t,x)=\left\lbrace
 \begin{aligned}
 \gamma(t,\Psi_{h(x)}^{-1}(x))\zeta_j(h(x)), &\quad \text{for $x \in S$}, 
 \\[5pt]
0\;, & \quad \text{for $x \in \Omega \setminus S$},
 \end{aligned}\right.
$$
where $h(x)$ is the level set associated with the deformation $\Psi_\tau$, which is defined in Section \ref{BN}. Then, we have
$$
\begin{aligned}
     &\int_0^T\int_{S} u(t,x) \ \partial_t\gamma(t,\Psi_{h(x)}^{-1}(x)) \ \zeta_j(h(x)) \ dxdt 
     \\[5pt]
     & \quad=\int_0^T\int_{S} u(t,x)A(x)\nabla \mathcal{K}u(t,x) \cdot \nabla\gamma(t,\Psi_{h(x)}^{-1}(x)) \ \zeta_j(h(x)) \ dxdt
\\[5pt]
& \quad+ \int_0^T\int_{S} u(t,x)A(x)\nabla \mathcal{K}u(t,x) \cdot \nabla h(x) \ \zeta_j'(h(x)) \ \gamma(t,\Psi_{h(x)}^{-1}(x)) \ dxdt.
\end{aligned}
$$
Consequently, applying the Coarea Formula for the function $h$, we obtain
 $$
 \begin{aligned}
 &\int^1_0\zeta_j(\tau)\int^T_0\int_{\partial\Omega_\tau}u(t,r)\dfrac{\partial_t\gamma(t,\Psi_{\tau}^{-1}(r))}{|\nabla h(r)|}\,d\clg{H}^{n-1}dtd\tau
 \\[5pt]
  &=\int^1_0\zeta_j(\tau)\int^T_0\int_{\partial\Omega_\tau}u(t,r)A(r)\nabla\mathcal{K}u(t,r)\cdot
  \dfrac{\nabla\gamma(t,\Psi_{h(\cdot)}^{-1}(\cdot))(r)}{|\nabla h(r)|}\,d\clg{H}^{n-1}(r)dtd\tau
 \\[5pt]
 &+\int^1_0\zeta_j'(\tau)\int^T_0\int_{\partial\Omega_\tau}
 u(t,r)A(r)\nabla\mathcal{K}u(t,r)\cdot\nu_\tau(r) \gamma(t,\Psi_{\tau}^{-1}(r))\,d\clg{H}^{n-1}(r)dtd\tau.
 \end{aligned}
 $$
Therefore, applying the Dominated Convergence Theorem in the above equation,
we get in the limit as $j \to \infty$
\begin{equation}
   J(\tau_2)+\int_0^{\tau_2}\Phi(\tau)d\tau=J(\tau_1)+\int_0^{\tau_1}\Phi(\tau)d\tau,\label{eq:J(tau)}
\end{equation}
 for all $\tau_1,\,\tau_2\in F$ and $\gamma\in\clg{F}$, where $\Phi(\tau)$ is given by
 $$
 \int^T_0\int_{\partial\Omega_\tau}u(t,r)\left(\dfrac{\partial_t \gamma(t,\Psi_{\tau}^{-1}(r))}{|\nabla h(r)|}
 -A(r)\nabla\mathcal{K}u(t,r)\cdot\dfrac{\nabla\gamma(t,\Psi_{h(\cdot)}^{-1}(\cdot))(r)}{|\nabla h(r)|}\right)\,d\clg{H}^{n-1}(r)dt.
 $$
 
 \smallskip
On the other hand, since $\clg{F}$ is dense in $C^{\infty}_c((0,T)\times\R^n)$, we have that \eqref{eq:J(tau)}
holds for $\gamma\in C^{\infty}_c((0,T)\times\R^n)$. Consequently, we obtain 
\begin{equation}
    \lim_{F\ni\tau\,\to\,0^+}\int^T_0\int_{\partial\Omega_{\tau}}u(t,r)A(r)\nabla\mathcal{K}u(t,r)
    \cdot\nu_\tau(r) \gamma(t,\Psi^{-1}_\tau(r))\,d\clg{H}^{n-1}(r)dt\nonumber
\end{equation}
exists for all  $\gamma\in C^{\infty}_c((0,T)\times\R^n)$.

%\medskip
%Now, using the same technique of the Lemma \ref{equiboundary} and without loss of generality, we obtain from \eqref{eq:limitesen} that
%\begin{equation}
%\begin{aligned}   
%    &\ess\!\!\lim_{\tau \to 0^+} \int_0^{T}
%    \!\!\!\int_{\partial\Omega_\tau} u(t,r)A(r)\nabla\mathcal{K}u(t,r)\cdot\nu_\tau(r)\gamma(t,\Psi^{-1}_\tau(r))\,d\clg{H}^{n-1}(r)dt=
%    \\[5pt]
%&\ess\!\!\lim_{\tau \to 0^+}\int^T_0
%\!\!\!\int_{\partial\Omega} u(t,\Psi_{\tau}(r))A(r)\nabla \mathcal{K}u(t,\Psi_{\tau}(r))\cdot\nu_\tau(\Psi_{\tau}(r))J[\Psi_\tau]\gamma(t,r) \ d\clg{H}^{n-1}(r)dt,
%\end{aligned}\nonumber
%\end{equation}
%where $J[\Psi_\tau]$ is defined in \eqref{eq:Jpsi}. Therefore, we have
%\begin{equation}
%    u(t,\Psi_{\tau}(r))A(r)\nabla \mathcal{K}u(t,\Psi_{\tau}(r))\cdot\nu_\tau(\Psi_{\tau}(r))J[\Psi_\tau](r),\label{eq:limit}
%\end{equation}
%is uniformly bounded in $L^2((0,T)\times \Gamma)$, with respect to $\tau>0$ small enough.
%Thus,  in view of the density of $C^{\infty}_c((0,T)\times\R^n)$ in $L^2((0,T)\times\partial\Omega)$ 
%and together with \eqref{eq:limit}, we obtain that \eqref{eq:limitesen} follows for all  
%$\gamma \in L^2((0,T)\times\partial\Omega)$. Finally, due to Lemma \ref{equiboundary},
%we obtain \eqref{existboundary}.
\end{proof}
The following result expresses in convenient
way the concept of (weak) solution of the IBVP \eqref{FTPME} as given by
Definition \ref{DEFSOL}.

\begin{theorem}[Equivalence Theorem]
\label{THEQUIVA}
A function $$u \in L^2\left((0,T); D\big(\mathcal{L}^{(1-s)/2}\big)\right) \cap L^\infty(\Omega_T)$$ 
is a weak solution of  the IBVP \eqref{FTPME} if, and only if, it satisfies 
\begin{equation}
    \iint_{\Omega_T} u(t,x) \ (\partial_t\phi-A(x)\nabla \mathcal{K}u\cdot \nabla \phi)dxdt
    +\int_{\Omega} u_0(x) \, \phi(0,x) \ dx= 0,\label{eq:thequiva}
\end{equation}
for each test function $\phi \in C^{\infty}_c((-\infty,T) \times \R^n)$. 
\end{theorem}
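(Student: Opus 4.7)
The plan is to prove the equivalence via two directions, both of which reduce to the same formal integration-by-parts computation made rigorous by a cutoff/approximation procedure. The key idea: for $\phi\in C^\infty_c((-\infty,T)\times\R^n)$, cut off both in time (using the regularized Heaviside $H_j(t-\epsilon)$ near $t=0$) and in space (using $\xi_k$ from Lemma \ref{Lemma:xik} near $\partial\Omega$), producing admissible test functions for \eqref{eq:solfrac}. The initial condition \eqref{eq:initalcondition} absorbs the temporal boundary contribution as $\epsilon\to 0^+$, while the boundary condition \eqref{eq:boundcondition}, via the Coarea formula for the level-set function $h$, absorbs the spatial boundary contribution as $k\to\infty$.

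For the ($\Rightarrow$) direction, given $\phi\in C^\infty_c((-\infty,T)\times\R^n)$, insert $\phi_{\epsilon,k,j}(t,x):=H_j(t-\epsilon)\xi_k(x)\phi(t,x)$ (after a standard mollification in $x$ to replace $\xi_k$ by a $C^\infty$ approximation) into \eqref{eq:solfrac}, and pass to limits in the order $j\to\infty$, $\epsilon\to 0^+$, $k\to\infty$. The $j$-limit concentrates $H_j'$ at $\epsilon$, producing $\int_\Omega u(\epsilon,x)\xi_k(x)\phi(\epsilon,x)\,dx$; the $\epsilon$-limit along the Lebesgue set $E$ of Theorem \ref{PROP10} converts this to $\int_\Omega u_0(x)\xi_k(x)\phi(0,x)\,dx$ via \eqref{eq:initalcondition} applied to $\zeta:=\xi_k\phi(0,\cdot)\in L^1(\Omega)$. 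Decomposing $\nabla(\xi_k\phi)=\xi_k\nabla\phi+\phi\nabla\xi_k$, the $k$-limit of the first piece yields $\iint u\,A\nabla\mathcal{K}u\cdot\nabla\phi\,dxdt$ by dominated convergence (Lemma \ref{Lemma:xik}). The delicate second piece transforms, via $\nabla\xi_k=ke^{-kh(x)}\nabla h(x)$ together with $A\nabla\mathcal{K}u\cdot\nabla h=|\nabla h|\,A\nabla\mathcal{K}u\cdot\nu_\tau$ on $\partial\Omega_\tau$ and the Coarea formula for $h$, into
\[
k\int_0^1 e^{-k\tau}\!\int_0^T\!\int_{\partial\Omega_\tau}\!u(t,r)A(r)\nabla\mathcal{K}u(t,r)\cdot\nu_\tau(r)\,\phi(t,r)\,d\clg{H}^{n-1}(r)\,dt\,d\tau.
\]
Since $ke^{-k\tau}d\tau$ is an approximate $\delta_0$ on $(0,1)$, this tends as $k\to\infty$ to $\ess\lim_{\tau\to 0^+}$ of the inner double integral, which by the uniform convergence $\Psi_\tau\to\mathrm{id}$ and uniform continuity of $\phi$ coincides with the essential limit in \eqref{eq:boundcondition} and is therefore zero, yielding \eqref{eq:thequiva}.

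The ($\Leftarrow$) direction is dual. Restricting \eqref{eq:thequiva} to $\phi\in C^\infty_c(\Omega_T)$ (for which $\phi(0,\cdot)\equiv 0$) recovers \eqref{eq:solfrac}, and Theorem \ref{PROP10} then supplies a function $\bar u\in L^\infty(\Omega)$ and the existence of both essential limits \eqref{existinitial}, \eqref{existboundary}. To identify $\bar u=u_0$, test \eqref{eq:thequiva} against $\phi(t,x)=(1-H_j(t-\epsilon))\zeta(x)$ for $\zeta\in C^\infty_c(\Omega)$; sending $j\to\infty$ and then $\epsilon\to 0^+$ along $E$ yields $\int_\Omega(\bar u-u_0)\zeta\,dx=0$, and density of $C^\infty_c(\Omega)$ in $L^1(\Omega)$ completes the identification. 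To show the boundary essential limit is zero, fix $\gamma\in C^\infty_c((0,T)\times\R^n)$ and subtract \eqref{eq:solfrac} applied to the admissible Lipschitz test function $\xi_k\gamma$ from \eqref{eq:thequiva} applied to $\gamma$; the difference reduces to
\[
\iint u(1-\xi_k)(\partial_t\gamma-A\nabla\mathcal{K}u\cdot\nabla\gamma)\,dxdt+\iint u\,A\nabla\mathcal{K}u\cdot\gamma\,\nabla\xi_k\,dxdt=0.
\]
As $k\to\infty$, the first term vanishes since $1-\xi_k\to 0$ in $L^2$ (Lemma \ref{Lemma:xik}), while the second produces via the same Coarea/approximate-$\delta_0$ calculation the quantity $\ess\lim_{\tau\to 0^+}J_\gamma(\tau)$; hence this limit must be zero, which is \eqref{eq:boundcondition}.

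The main obstacle is the limited regularity of the boundary cutoff $\xi_k$: the product $\xi_k\phi$ is only Lipschitz and so is not literally in $C^\infty_c(\Omega_T)$. This requires a preliminary extension of \eqref{eq:solfrac} by density to Lipschitz test functions with compact support in $\overline{\Omega_T}$ that vanish on $\partial\Omega_T$, using $u\in L^\infty(\Omega_T)$ and $\nabla\mathcal{K}u\in L^2(\Omega_T)$ from Proposition \ref{proKuHu}(2) to absorb the $H^1$ approximation error. A secondary, essentially technical, subtlety is the mismatch between the test function $\phi(t,r)$ arising naturally in the Coarea computation and the pulled-back $\phi(t,\Psi_\tau^{-1}(r))$ used in \eqref{eq:boundcondition}; the pointwise difference is $O(\tau)\|\nabla\phi\|_\infty$ on $\partial\Omega_\tau$ and, since Theorem \ref{PROP10}(2) guarantees the existence of the essential limits in both formulations, they coincide.
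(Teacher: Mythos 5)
Your overall strategy is the same as the paper's: cut off in time with the regularized Heaviside $H_j$ at Lebesgue points of $I(t)$, cut off near $\partial\Omega$ with a function of the level-set map $h$, convert the boundary-layer term with the Coarea formula, and let the essential-limit conditions \eqref{eq:initalcondition}, \eqref{eq:boundcondition} (whose existence is supplied by Theorem \ref{PROP10}) absorb the two boundary contributions. The difference is in the implementation of the spatial cutoff: the paper uses the sharp cutoffs $\zeta_j(h(\cdot))$ localized at Lebesgue points $\tau_0\in F$, derives the identity $J(\tau_0)=\int_0^{\tau_0}\Phi(\tau)\,d\tau$, and then sends $\tau_0\to0$, whereas you use the exponential cutoff $\xi_k$ of Lemma \ref{Lemma:xik} (which the paper reserves for the existence proof) so that $\nabla\xi_k$ produces the weight $k e^{-k\tau}$ acting as an approximate $\delta_0$ directly at $\tau=0$. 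This is a legitimate variant: it trades the Lebesgue-point bookkeeping for the requirement that $\tau\mapsto J_\gamma(\tau)$ be in $L^1(0,1)$ (which Lemma \ref{exisintegral} gives) together with the existence of the essential limit. Your explicit remark that \eqref{eq:solfrac} must first be extended to Lipschitz test functions vanishing on the lateral boundary is a point the paper passes over silently, and your handling of the $\Leftarrow$ direction (subtracting \eqref{eq:solfrac} tested on $\xi_k\gamma$ from \eqref{eq:thequiva} tested on $\gamma$) is correct.

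Two repairs are needed, one of them substantive. In the $\Rightarrow$ direction you fix the order $j\to\infty$, $\epsilon\to0^+$, $k\to\infty$; after the $\epsilon$-limit the boundary term involves the time integral over all of $(0,T)$ with $\phi$ not vanishing near $t=0$, and neither \eqref{eq:boundcondition} nor Theorem \ref{PROP10}(2) covers such a test function (they require $\gamma\in C^{\infty}_c((0,T)\times\R^n)$); the corner contribution near $\{0\}\times\partial\Omega$ cannot be controlled by the $L^1_\tau$ bound alone, so the step ``this tends to the essential limit, which is zero'' is unjustified as written. The fix is immediate: perform the $k$-limit while the time cutoff is still in place, i.e.\ apply \eqref{eq:boundcondition} to $\gamma=H_j(\cdot-\epsilon)\,\phi$, which does lie in the admissible class (up to a routine density remark on its regularity), and only afterwards remove the time cutoff. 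Second, your justification of the pullback mismatch is off: Theorem \ref{PROP10}(2) does not assert existence of the essential limit for the non--pulled-back integrand, so ``the essential limits in both formulations coincide'' is not available; instead use your own observation $|\phi(t,r)-\phi(t,\Psi_\tau^{-1}(r))|\le C\tau\|\nabla\phi\|_\infty$ together with dominated convergence against the weight $k\tau e^{-k\tau}\le e^{-1}$ and the $L^1(0,1)$ majorant $\tau\mapsto\int_0^T\!\int_{\partial\Omega_\tau}|u|\,|\nabla\mathcal{K}u|\,d\clg{H}^{n-1}dt$, which makes the error term vanish as $k\to\infty$. With these adjustments your argument goes through and is, in substance, the paper's proof with a different boundary cutoff.
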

\begin{proof}
1. Assume that $u$ satisfies \eqref{eq:thequiva}, then we show that $u$ verifies \eqref{eq:solfrac}--\eqref{eq:boundcondition}.
To show \eqref{eq:solfrac}, it is enough to consider test functions $\phi \in C^{\infty}_c(\Omega_T)$.
In order to show \eqref{eq:initalcondition}, let us consider $\phi(t,x)= \gamma_j(t)\zeta(x)$, 
$\gamma_j(t)=H_j(t+t_0)-H_j(t-t_0)$ for any $t_0 \in E$ (fixed), and 
$\zeta \in \clg{E}$. 
Then, from \eqref{eq:thequiva} we have
$$
\begin{aligned}
      \iint_{\Omega_T} u(t,x) \gamma_j'(t)\zeta(x) \ dxdt
               &+\int_{\Omega} u_0(x) \zeta(x) \ dx
\\[5pt]               
               &=\iint_{\Omega_T} u(t,x) A(x)\nabla\mathcal{K}u(t,x) \cdot \nabla\zeta(x) \gamma_j(t) \ dxdt.
\end{aligned}
$$
Passing to the limit in the above equation as $j \to \infty$, and taking into account that $t_0$ is Lebesque point of $I(t)$, we obtain
%and also that $\gamma_j(t)$ converge pointwise to the characteristic function of the interval $[-t_0,t_0)$, we obtain
\begin{equation}
\begin{aligned}
   \int_{\Omega} u(t_0,x)\zeta(x) dx  =&\int_{\Omega} u_0(x)\zeta(x)dx 
\\[5pt]   
&\quad-\int^{t_0}_0\int_{\Omega}u(x)A(x)\nabla\mathcal{K}u(x)\cdot\nabla\zeta(x)dxdt,
\end{aligned}\label{INT100}
\end{equation} 
where we have used the Dominated Convergence Theorem. Since $t_0 \in E$ is arbitrary, and in view of the density of $\clg{E}$ 
in $L^1(\Omega)$, it follows from \eqref{INT100} that 
$$
    \ess\lim_{t\to 0}\int_{\Omega} u(t,x)\zeta(x) dxdt  =\int_{\Omega} u_0(x) \zeta(x) \ dx
$$
for all $\zeta \in L^1(\Omega)$, which shows \eqref{eq:initalcondition}.

\medskip
Finally, let us show \eqref{eq:boundcondition}. Similarly to   proof in Proposition \ref{PROP10} (2), we choose
$$
 \phi(t,x)=\left\lbrace
 \begin{aligned}
 \gamma(t,\Psi_{h(x)}^{-1}(x))\zeta_j(h(x)), &\quad \text{for $x \in S$}, 
 \\[5pt]
0\;, & \quad \text{for $x \in \Omega \setminus S$},
 \end{aligned}\right.
$$
where $\gamma \in \clg{F}$, $\zeta_j(\tau)=H_j(\tau+\tau_0)-H_j(\tau-\tau_0)$, with $\tau_0\in F$,
and $S= \Psi(F \times \partial \Omega)$.
Therefore, from \eqref{eq:thequiva} we obtain
$$
\begin{aligned}
\int_0^T\int_{S} &u(t,x) \ \partial_t \gamma(t,\Psi_{h(x)}^{-1}(x)) \ \zeta_j(h(x)) \ dxdt
\\[5pt]
&=\int_0^T\int_{S}u(t,x) \ A(x) \nabla \mathcal{K}u(t,x) \cdot \nabla\gamma(t,\Psi_{h(x)}^{-1}(x))\zeta_j(h(x)) \ dxdt
\\[5pt]
&+ \int_0^T\int_{S}u(t,x) \ A(x) \nabla \mathcal{K}u(t,x) \cdot \nabla h(x)\zeta_j'(h(x))\gamma(t,\Psi_{h(x)}^{-1}(x)) \ dxdt.
\end{aligned}
$$
On other hand, applying the Coarea Formula for the function $h$ in the above equation, we have
$$
 \begin{aligned}
 &\int^1_0\zeta_j(\tau)\int^T_0\int_{\partial\Omega_\tau}u(t,r)\dfrac{\partial_t\gamma(t,\Psi_{\tau}^{-1}(r))}{|\nabla h(r)|}\,d\clg{H}^{n-1}(r)dtd\tau
 \\[5pt]
  &=\int^1_0\zeta_j(\tau)\int^T_0\int_{\partial\Omega_\tau}u(t,r) \ A(r)\nabla\mathcal{K}u(t,r)
  \cdot \dfrac{\nabla\gamma(t,\Psi_{h(x)}^{-1}(x))(r)}{|\nabla h(r)|}\,d\clg{H}^{n-1}(r)dtd\tau
 \\[5pt]
 &+\int^1_0\zeta_j'(\tau)\int^T_0\int_{\partial\Omega_\tau}u(t,r) \ A(r)\nabla\mathcal{K}u(t,r)
 \cdot \nu_\tau(r) \gamma(t,\Psi_{\tau}^{-1}(r))\,d\clg{H}^{n-1}(r)dtd\tau.
 \end{aligned}
$$
Then, passing to the limit in the above equation as 
$j\to\infty$ and taking into account that $\tau_0$ is a
Lebesque point of $J(\tau)$, and also that $\zeta_j(t)$ 
converges pointwise to the characteristic function of the interval $[-\tau_0,\tau_0)$, we obtain
\begin{equation}
   J(\tau_0)=\int_0^{\tau_0}\Phi(\tau)d\tau,\label{eq:J(tau)}
\end{equation}
 for all $\tau_0\in F$ and $\gamma\in\clg{F}$, where $\Phi(\tau)$ is given by
 $$
 \int^T_0\int_{\partial\Omega_\tau}u(t,r)\left(\dfrac{\partial_t \gamma(t,\Psi_{\tau}^{-1}(r))}{|\nabla h(r)|}
 -A(r)\nabla\mathcal{K}u(t,r)\cdot\dfrac{\nabla\gamma(t,\Psi_{h(\cdot)}^{-1}(\cdot))(r)}{|\nabla h(r)|}\right)\,d\clg{H}^{n-1}(r)dt.
 $$
 
 \smallskip
On the other hand, since $\clg{F}$ is dense in $C^{\infty}_c((0,T)\times\R^n)$, we have that \eqref{eq:J(tau)}
holds for $\gamma\in C^{\infty}_c((0,T)\times\R^n)$. Then, for each $\tau\in F$ we have
$$
\left| J(\tau)\right|\leq C\left| \Psi((0,\tau)\times\partial\Omega)\right|,
$$
where $C$ is a positive 
constant, which does not depend on $\tau$. 
Hence passing to the limit as $\tau \to 0$, we obtain
$$
\lim_{F\ni\tau\,\to\,0}\int^T_0 \!\!\! \int_{\partial\Omega_{\tau}}u(t,r) \ A(r)\nabla\mathcal{K}u(t,r)
\cdot\nu_\tau(r) \gamma(t,\Psi^{-1}_\tau(r))\,d\clg{H}^{n-1}(r)dt= 0,
$$
for all $\gamma \in C^{\infty}_c((0,T)\times\R^n)$.

\medskip
2. Now, let us consider: \eqref{eq:solfrac}--\eqref{eq:boundcondition} $\Rightarrow$ \eqref{eq:thequiva}. 
The idea is similar to that one done before; for completeness we give the main points. 
First, we consider $j\in\Bbb{N}$ sufficiently large and take for any $t_0 \in E$
$$
  \phi(t,x)=\psi(t,x)H_j(t-t_0),
$$
where $\psi\in C_c^{\infty}((-\infty,T)\times\Omega))$, $H_j(t)$ as considered before. Then, 
from \eqref{eq:solfrac} we obtain
$$
\begin{aligned}
  &\iint_{\Omega_T}u(t,x)\partial_t\psi(t,x)H_j(t-t_0) \ dxdt+\iint_{\Omega_T} u(t,x) \ \psi(t,x) \ H'_j(t-t_0) \ dxdt
\\[5pt]  
  &-\iint_{\Omega_T} u(t,x) A(x)\nabla\mathcal{K}u(t,x) \cdot \nabla\psi(t,x) H_j(t-t_0) \ dxdt=0.
\end{aligned}
$$
Passing to the limit as $j\to \infty$, and taking into account that $t_0$ is a 
Lebesgue point of $I(t)$, also that $H_j(\cdot-t_0)$ converges pointwise to the Heaviside function $H(\cdot-t_0)$, 
after that, taking the limit as $E\ni t_0\to 0$ and using \eqref{eq:initalcondition}, we have
\begin{equation}
\label{eq:infTxOmega}
\begin{aligned}
  \iint_{\Omega_T} u(t,x) \partial_t \psi(t,x)&\ dxdt +\int_{\Omega}u_0(x) \ \psi(0,x) \ dx
  \\[5pt]
  & - \iint_{\Omega_T} u(t,x)A(x) \nabla\mathcal{K}u(t,x) \cdot \nabla\psi(t,x) \ dxdt=0,
\end{aligned}
\end{equation}
for all $\psi\in C_c^{\infty}((-\infty,T)\times\Omega)$. In particular, for
$$
  \psi(t,x)=\phi(t,x)(1-\zeta_j(h(x)),
$$
where $\phi\in C_c^{\infty}((-\infty,T)\times\Bbb{R}^n)$, $h(x)$ as above and we consider the function 
$\zeta_j(\tau)= H_j(\tau+\tau_0) - H_j(\tau-\tau_0)$, where $\tau_0\in F$. Then, from (\ref{eq:infTxOmega}) we obtain
$$
\begin{aligned}
\iint_{\Omega_T} u(t,x) \partial_t&\phi(t,x)(1-\zeta_j(h(x))) \ dxdt + \int_{\Omega} u_0(x) \ \phi(0,x)(1-\zeta_j(h(x))) \ dx
\\[5pt]
&- \iint_{\Omega_T} u(t,x) A(x)\nabla \mathcal{K}u(t,x) \cdot \nabla\phi(t,x) \ (1-\zeta_j(h(x))) \ dxdt
\\[5pt]
&+  \iint_{\Omega_T}  u(t,x) A(x)\nabla\mathcal{K}u(t,x) \cdot\nabla h(x) \ \zeta'_j(h(x)) \phi(t,x) \ dxdt= 0.
\end{aligned}
$$
Finally, we use the Coarea Formula for the function $h$ in the last integral of the above equation, and
pass to limit as $j \to \infty$. Therefore, we obtain for all $\phi\in C_c^{\infty}((-\infty,T)\times\R^n)$
$$
    \iint_{\Omega_T}\!u(t,x)(\partial_t\phi(t,x) - A(x)\nabla\mathcal{K}u(t,x) \cdot \nabla\phi(t,x) )\, dxdt
    +\int_{\Omega}u_0(x)\phi(0,x)dxdt=0,
$$
where we have used \eqref{eq:boundcondition}.
\end{proof}
%%%%%%%%%%%%%%%%%
%%%%%%%%%%%%%%%%%%%%%%%%%%%%%%%%%%
\subsection{Solution estimates for the IBVP}
\label{Basic estimates}
%%%%%%%%%%%%%%%%%

Now, we show basic estimates, which are required 
to show existence of weak solutions
to the IBVP \eqref{FTPME}. We perform formal calculations,
assuming that $u\geq0$ satisfies the required smoothness and integrability assumptions.
 
\begin{enumerate}
\item \emph{Conservation of mass}: For all $t\in(0,T)$,
$$
\dfrac{d}{dt}\int_{\Omega} u(t,x) \ dx= \int_{\Omega}\dive(uA(x)\nabla\mathcal{K}u) \ dx=0.
$$
 
\item \emph{Conservation of positivity}: If  the initial condition $u_0$ is non-negative, then the solution $u$ of (\ref{FTPME}) is non-negative.

\medskip
Indeed, we assume $u_0> 0$ (without loss of generality). 
For any $0< t_0 \leq T$ (fixed), let $x_0\in\Omega$ be a point where $u(t_0,\cdot)$ is a minimum, which is to say
$$
   u(t_0,x)\geq u(t_0,x_0) \quad \text{for each $x \in \Omega$}.
$$
We claim that $u(t_0,x_0)\geq0$. 
%for any $0< t_0 \leq T$. 
Note that, since $t_0$ is arbitrary, this sentence implies that $u$ is non-negative. 
Let us suppose that, $u(t_0,x_0)<0$,
%Certainly, assume that exist $t_0\in(0,T]$ and $x_0\in\Omega$ such that $u(t_0,x_0)<0$.
and consider for each $\delta>0$,
%Our proof starts with the construction of $\varphi_\delta$. For each $\delta>0$, we consider
\begin{equation}
\varphi_{\delta}(w)= \left\{
\begin{aligned}
 (w^2+\delta^2)^{1/2}-\delta, &\quad \text{for $0 \leq w$},
 \\[5pt]
0 \quad,& \quad \text{for $w \leq 0$}.
\end{aligned}
\right.
\end{equation}
Then, $\varphi_\delta(w)$ converges to $w^+=\max\{w,0\}$ as $\delta\to0^+$. Now, multiplying 
the first equation in \eqref{FTPME} by $\varphi'_{\delta}(u)$ and evaluating in $(t_0,x_0)$, we obtain
$$
\begin{aligned}
    \frac{d}{dt} \varphi_{\delta}(u)(t_0,x_0)&= \nabla u(t_0,x_0) \cdot A(x_0)\nabla\mathcal{K}u(t_0,x_0) \ \varphi'_{\delta}(u(t_0,x_0))
    \\[5pt]
    &\;+ u(t_0,x_0) \ \varphi'_{\delta}(u(t_0,x_0)) \ \dive(A(\cdot)\nabla\mathcal{K}u)(t_0,x_0).
    \end{aligned}
$$
The first term in the right hand side of the above equation
is zero, since $x_0$ is a point where $u(t_0,\cdot)$ is a minimum.
For the second term, we recall that $-\dive(A(\cdot)\nabla\mathcal{K}u)=\mathcal{L}(\mathcal{K}u)=\mathcal{L}^{1-s}u$, hence due to Lemma \ref{lem:semihet},
it follows that
\begin{equation}
-\dive(A(\cdot)\nabla\mathcal{K}u)(t_0,x_0)=\dfrac{1}{\Gamma(s-1)}\int^{\infty}_0
\big( e^{-t\mathcal{L}}u(t_0,x_0)-u(t_0,x_0) \big) \dfrac{dt}{t^{2-s}},
\label{eq:DeltaK}
\end{equation}
where $\Gamma(s-1)< 0$ $(s< 1)$ and $v(t,x)= e^{-t\mathcal{L}}u(t_0,x)$ is the weak solution of the IBVP
$$
\left\lbrace\begin{aligned}
   \partial_t v+\mathcal{L} v&=0,\;\;\;\;\;\;\;\;\;\mbox{in}\;(0,T)\times\Omega,
   \\[3pt]
   v(t,x)&= 0,\;\;\;\;\;\;\;\;\;\mbox{on}\;[0,T]\times\partial\Omega,
   \\[3pt]
   v(0,x)&=u(t_0,x),\mbox{in}\;\Omega.
\end{aligned}\right.
$$
Now, applying the (weak) maximum principle, we get
\begin{equation}
\min_{(t,x)\,\in\,\bar{\Omega}_T} e^{-t\mathcal{L}}u(t_0,x)= \min_{(t,x)\,\in\,\Gamma_T} e^{-t\mathcal{L}}u(t_0,x),
\label{eq:heatforv}
\end{equation}

where $\Gamma_T$ is the parabolic boundary of $\Omega_T$, which comprises 
$\left\lbrace0\right\rbrace\times\Omega$ and 
$[0,T]\times\partial\Omega$. Consequently, we have from \eqref{eq:heatforv}
$$
   \min_{(t,x)\,\in\,\bar{\Omega}_T} e^{-t\mathcal{L}}u(t_0,x)=
   \min\left\lbrace 0,\min_{x\in\,\Omega}u(t_0,x)\right\rbrace.
%   =\min\left\lbrace 0,u(t_0,x_0)\right\rbrace,
$$
%where the last equality is because $x_0$ 
%is a point that minimizes the function $u(t_0,\cdot)$. 
%Since $\min\left\lbrace 0,u(t_0,x_0)\right\rbrace= u(t_0,x_0)$
Therefore, it follows that
$e^{-t\mathcal{L}}u(t_0,x)\geq u(t_0,x_0)$,  
for all $x\in\Omega$. Thus from \eqref{eq:DeltaK} we deduce that, 
$-\dive(A(\cdot)\nabla\mathcal{K}u)(t_0,x_0) \geq 0$. Moreover, since 
$u \ \varphi'_{\delta}(u) \geq 0$, we have at $(t_0,x_0)$ that $\frac{d}{dt}\varphi_{\delta}(u)\geq 0$, and thus
\begin{equation}
\label{eq:varut}
     \varphi_{\delta}(u(t_0))\geq \varphi_{\delta}(u_0).
\end{equation}
Then, passing to the limit in \eqref{eq:varut} as $\delta \to 0$, we obtain $u^{+}(t_0) \geq u_0$,
which implies that $u(t_0,x_0)> 0$, which is a contradiction, hence $u$ is non-negative.

\item  \emph{$L^{\infty}$ estimate}: The $L^{\infty}$ norm of $u$ does not increase in time.

\medskip
Indeed, for any $0< t_0 \leq T$ (fixed), let $x_0$ be a point where $u(t_0,\cdot)$ is a maximum, which is to say
$$
   u(t_0,x)\leq u(t_0,x_0) \quad \text{for all $x \in \Omega$}.
$$
Therefore, we have
$$
    \frac{du}{dt}(t_0,x_0) = \nabla u(t_0,x_0) \cdot A(x_0) \nabla\mathcal{K}u(t_0,x_0)
   + u(t_0,x_0)\dive(A(\cdot)\nabla\mathcal{K}u)(t_0,x_0).
 $$
The first term in the right hand side of the above equation
is zero, since $x_0$ is a point where $u(t_0,\cdot)$ is a maximum.
For the second term, we use the same ideas as above, thus $\dive(A(\cdot)\nabla\mathcal{K}u)(t_0,x_0) \leq 0$. 
Moreover, since $u\geq0$, then at $(t_0,x_0)$ we have $\frac{du}{dt}\leq 0$, which implies item 3.

\item  \emph{First energy estimate}: For all $t\in(0,T)$,
$$
\int_{\Omega}u(t,x) \log u(t,x) \ dx  +\Lambda_1 \int_0^t\int_{\Omega} \vert \nabla \mathcal{H}u(t',x) \vert^2 \ dxdt'\leq   \int_{\Omega} u_0(x) \log u_0(x) \ dx
$$

Indeed, multiplying the first equation \eqref{FTPME} by $\log u(t',x)$ and integrate on $\Omega$. Then after integration by part, we obtain
$$
\dfrac{\partial}{\partial t}\int_{\Omega} u(t',x)\log u(t',x)dx+
\int_{\Omega}A(x)\nabla \mathcal{K}u(t',x)\cdot \nabla u(t',x)dx=0.
$$
On the other hand, from Proposition \ref{proKuHu} (2), we have
$$
\dfrac{\partial}{\partial t}\int_{\Omega} u(t',x)\log u(t',x)dx+
\Lambda_1\int_{\Omega}|\nabla \mathcal{H}u(t',x)|^2dx 
\leq0
$$ 
Then, we integrate over $(0,t)$, for all $0<t<T$, to obtain the first energy estimate. 
\item  \emph{Second energy estimate}: Similar to the above description, is not difficult to show that
$$
   \dfrac{1}{2}\int_{\Omega} \vert \mathcal{H} u(t_2,x) \vert^2 dx'
   +\Lambda_1\int_{t_1}^{t_2}\int_{\Omega} u(t',x) \vert \nabla \mathcal{K}u(t',x) \vert^2 dxdt\leq \dfrac{1}{2}\int_{\Omega}\vert \mathcal{H}u(t_1,x)\vert^2 dx,
$$
for $0 \leq t_1 < t_2  \leq T$.
\end{enumerate}

%%%%%%%%%%%%%%%%%%%%%
\section{Existence of Weak Solutions}
%%%%%%%%%%%%%%%%%%%%%
The aim of this section is to find a weak solution of \eqref{FTPME}. To show that we use the equivalent definition given by the theorem \ref{THEQUIVA}. The following theorem show the existence of weak solution.

\begin{theorem}
\label{Thprincipal}
Let $u_0 \in L^{\infty}(\Omega)$
be a non-negative function. 
Then, there exists a weak solution $u \in L^2\big((0,T);D\big(\mathcal{L}^{(1-s)/2}\big)\big)\cap L^{\infty}(\Omega_T)$ 
of the IBVP \eqref{FTPME}.
\end{theorem}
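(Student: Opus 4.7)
The plan is to construct $u$ as the limit of solutions to a non-degenerate regularization, then invoke the Equivalence Theorem \ref{THEQUIVA} to conclude it is a weak solution in the sense of Definition \ref{DEFSOL}. Specifically, for $\epsilon > 0$ I would consider the approximate problem
\begin{equation*}
\partial_t u^\epsilon = \dive\!\bigl((u^\epsilon_+ + \epsilon)\,A(x)\,\nabla \mathcal{K} u^\epsilon\bigr), \qquad u^\epsilon(0,\cdot) = u_0,
\end{equation*}
with homogeneous Dirichlet data, whose effective mobility $(u^\epsilon_+ + \epsilon)$ is strictly positive. Existence of $u^\epsilon$ can be obtained by Galerkin approximation on the spectral basis $\{\varphi_k\}$ of $\mathcal{L}$: seek $u^{\epsilon,N}(t,\cdot) = \sum_{k=1}^{N} c_k(t)\,\varphi_k$ solving the projected system obtained by testing against each $\varphi_j$. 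Because each $\varphi_k$ is in $C^2(\overline{\Omega})$ and $\nabla \mathcal{K}$ is bounded on $H^1_0(\Omega)$ by Proposition \ref{proKuHu}, the resulting ODE system for $(c_1,\ldots,c_N)$ has a locally Lipschitz right-hand side, hence a unique local-in-time solution.

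To extend the Galerkin iterate globally and pass to the limit, I would rigorously establish the formal estimates of Section \ref{Basic estimates} at the Galerkin level. The maximum-principle arguments of items (2) and (3), which rest on the minimum/maximum principle for the heat semigroup together with smoothness of the iterate, yield the uniform bound $0 \leq u^{\epsilon,N} \leq \|u_0\|_{L^\infty(\Omega)}$; the regularization does not affect these arguments because the mobility remains strictly positive. The first energy estimate of item (4) then provides a uniform bound on $\nabla \mathcal{H} u^{\epsilon,N}$ in $L^2(\Omega_T)^n$, which by Proposition \ref{proKuHu} and Corollary \ref{poincare} is equivalent to a uniform bound on $u^{\epsilon,N}$ in $L^2\!\bigl((0,T); D(\mathcal{L}^{(1-s)/2})\bigr)$. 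Testing the equation against $\psi \in D(\mathcal{L}^{(1-s)/2})$ and combining the $L^\infty$-bound with the second energy estimate gives a uniform bound on $\partial_t u^{\epsilon,N}$ in $L^2\!\bigl((0,T);(D(\mathcal{L}^{(1-s)/2}))'\bigr)$. The Aubin--Lions compactness theorem referenced in the Introduction then extracts a subsequence converging strongly in $L^2(\Omega_T)$ and weakly in $L^2\!\bigl((0,T);D(\mathcal{L}^{(1-s)/2})\bigr)$, first as $N \to \infty$ and subsequently as $\epsilon \to 0^+$.

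To pass to the limit in \eqref{eq:thequiva}, the linear terms are handled by strong $L^2$-convergence. For the nonlinear flux $(u^{\epsilon,N}_+ + \epsilon)\,A\,\nabla \mathcal{K} u^{\epsilon,N} \cdot \nabla \phi$, Proposition \ref{proKuHu} together with the uniform $D(\mathcal{L}^{(1-s)/2})$-bound yields $\nabla \mathcal{K} u^{\epsilon,N} \rightharpoonup \nabla \mathcal{K} u$ weakly in $L^2(\Omega_T)^n$, so pairing this with the strong $L^2$-convergence of $u^{\epsilon,N}$ — upgraded to almost-everywhere convergence and dominated by the $L^\infty$-bound — produces the limit $u\,A\,\nabla \mathcal{K} u \cdot \nabla \phi$. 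The limit $u$ then satisfies \eqref{eq:thequiva} for every $\phi \in C_c^{\infty}((-\infty,T)\times\R^n)$, and Theorem \ref{THEQUIVA} completes the proof. The principal obstacle is exactly this nonlinear passage to the limit: it demands delicate coordination of the compactness estimate on $u^{\epsilon,N}$, the weak continuity of $\nabla \mathcal{K}$, and the preservation of the $L^\infty$-bound through the double limit, all while ensuring that the positivity and boundedness arguments of Section \ref{Basic estimates}, written formally for smooth classical solutions, are justified at the Galerkin level and then stable under the limit.
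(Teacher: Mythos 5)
There is a genuine gap, and it sits exactly where you locate the ``principal obstacle'': the a priori bounds you need cannot be established at the Galerkin level. The paper does not use a Galerkin scheme; it regularizes with \emph{two} parameters, adding a vanishing-viscosity term $-\delta\,\dive(A(x)\nabla u_{\mu,\delta})$ in addition to lifting the mobility to $q(u)=u+\mu$ (see \eqref{eq:aproxequation1}--\eqref{eq:aproxequation3}), precisely so that the approximate problem is a genuine second-order quasilinear parabolic equation to which the classical theory of \cite{LSU68} applies, giving a unique classical solution $u_{\mu,\delta}\in C^2(\Omega_T)\cap C(\bar\Omega_T)$. All the estimates of Theorem \ref{Theaprox} are then derived pointwise from this smoothness: the bound $0\le u\le\Vert u_0\Vert_\infty$ and positivity come from evaluating the equation at interior extrema and using the heat-semigroup representation of $\mathcal{L}^{1-s}$ (Lemma \ref{lem:semihet}), and the first energy estimate \eqref{eq:Fu} comes from multiplying by the nonlinear entropy derivative $\eta'(u)=\log(1+u/\mu)$. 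None of these operations survives projection onto $\mathrm{span}\{\varphi_1,\dots,\varphi_N\}$: you cannot evaluate the projected equation at a maximum point of $u^{\epsilon,N}$, and $\eta'(u^{\epsilon,N})$ (or $\varphi'_\delta(u^{\epsilon,N})$) is not an admissible Galerkin test function, so the claims that the maximum-principle arguments ``yield'' $0\le u^{\epsilon,N}\le\Vert u_0\Vert_\infty$ and that item (4) of Section \ref{Basic estimates} ``provides'' the $\nabla\mathcal{H}u^{\epsilon,N}$ bound are unsupported. Since the $L^\infty$ bound and the entropy estimate are the inputs for every subsequent step (the $L^2((0,T);D(\mathcal{L}^{(1-s)/2}))$ bound, the $\partial_t$ bound, Aubin--Lions, and the weak--strong pairing in the nonlinear flux), the argument as written does not close.

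A second, related defect is dropping the $\delta$-viscosity altogether: your approximate equation $\partial_t u^\epsilon=\dive((u^\epsilon_++\epsilon)A\nabla\mathcal{K}u^\epsilon)$ is nonlocal of order $2-2s<2$, so even bypassing Galerkin you cannot appeal to standard quasilinear parabolic theory to get solutions smooth enough for the pointwise maximum-principle and entropy computations; this is exactly what the extra term $-\delta\dive(A\nabla u)$ buys in the paper (together with smoothed initial data $u_{0\delta}$ vanishing on $\partial\Omega$). Finally, even granting compactness, the passage $\epsilon\to0$ in the entropy estimate is not automatic: the paper's proof of Theorem \ref{Thprincipal} has to split $\eta$ and control the terms $u_\mu\log(u_\mu+\mu)$ and $\mu\log(1+u_0/\mu)$ separately to keep the $\nabla\mathcal{H}u_\mu$ bound uniform in $\mu$, a step your single-limit sketch glosses over. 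The overall architecture (regularize, energy estimates, Aubin--Lions, conclude via Theorem \ref{THEQUIVA}) matches the paper, but the existence-and-regularity mechanism for the approximations must be repaired, most simply by reinstating the viscosity term and citing the classical solvability result rather than a Galerkin construction.
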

The proof will be divided into two subsections.

%%%%%%%%%%%%%%%%%%%
\subsection{Smooth Solution}
\label{Parabolic regularization}
%%%%%%%%%%%%%%%%%%%
To show the existence of the solution we use the method of vanishing
viscosity and also it will be eliminated the degeneracy by raising the 
level set $\{u = 0\}$ in the diffusion coefficient. The basic idea of which is as follows: for  $\delta,\;\mu\in (0, 1)$ we study
the parabolic perturbation of the Cauchy problem \eqref{FTPME} given by
\begin{eqnarray}
     \partial_t u_{\mu,\delta}
     -\delta\dive(A(x)\nabla  u_{\mu,\delta})&=&
     \dive(q(u_{\mu,\delta}) A(x) \nabla \mathcal{K} u_{\mu,\delta}) 
     \;\hspace{0,6cm}\;\mbox{in}\;\Omega_T,\label{eq:aproxequation1}\\
u_{\mu,\delta}(0,\cdot)&=&u_{0\delta} \hspace{3cm}\mbox{in}\;\Omega,\label{eq:aproxequation2}\\
 u_{\mu,\delta}&=& 0 \hspace{3,2cm}\mbox{on}\; (0,T) \times \partial \Omega,\label{eq:aproxequation3}
\end{eqnarray}
where $q(\lambda)= \lambda+\mu$, and $u_{0\delta}$ is a non-negative
smooth bounded approximation of the initial data $u_0\geq 0$, 
satisfying $u_{0\delta}= 0$ on $\partial \Omega$.
% on $\overline{\Gamma_T}\cap \overline{\Omega}$. 

\medskip
Now,  we make use of the well known results of existence, uniqueness and uniform $L^{\infty}$ bounds 
for quasilinear parabolic problems. Therefore, for each $\delta,\;\mu> 0$, 
there exists a unique classical solution 
$u_{\mu,\delta} \in C^2(\Omega_T) \cap C\big(\bar{\Omega}_T\big)$
of the IBVP \eqref{eq:aproxequation1}--\eqref{eq:aproxequation3}, (see \cite{LSU68}, p. 449). 

\medskip
The following theorem investigates the properties of the solution
$u_{\mu,\delta}$ to the parabolic perturbation \eqref{eq:aproxequation1}--\eqref{eq:aproxequation3}  for fixed $\delta,\;\mu\in (0, 1)$.
\begin{theorem}
\label{Theaprox}
For each $\mu,\delta>0$, let $u= u_{\mu,\delta}  
\in C^2(\Omega_T) \cap C\big(\bar{\Omega}_T\big)$ be the unique classical solution of  
\eqref{eq:aproxequation1}--\eqref{eq:aproxequation3}. Then, $u$ satisfies:
\begin{enumerate}
\item[$(1)$] For all $\phi\in C_c^{\infty}((-\infty,T)\times\Bbb{R}^n)$,
\begin{equation}
\label{eq:thequiaproxsol}
\hspace{-0,57cm}\begin{aligned}
    \iint_{\Omega_T} u(t,x)(\partial_t \phi(t,&x)-\delta\mathcal{L}\phi(t,x)) \ dxdt 
    + \int_{\Omega}u_{0\delta}(x) \ \phi(0,x) \ dx
 \\[5pt]
    &=\iint_{\Omega_T} q(u(t,x)) A(x) \nabla \mathcal{K}u(t,x) \cdot \nabla\phi(t,x) \ dxdt.
\end{aligned}
\end{equation}

\item[$(2)$] For each $t \in (0,T)$, we have 
\begin{equation}
\label{eq:+ebounded}
   \Vert u(t)\Vert_{\infty}\leq\Vert u_0\Vert_{\infty},
\end{equation}  
and the conservation of the ``total mass"
\begin{equation}
\label{eq:consermassaprox}
\int_{\Omega}u(t,x) \ dx = \int_{\Omega} u_{0\delta}(x) \ dx 
\leq \int_{\Omega} u_0(x) \ dx.
\end{equation}
Furthermore, for all $(t,x)\in\Omega_T$, $0 \leq u(t,x)$.

\item[$(3)$] First energy estimate: For $\eta(\lambda):= (\lambda+\mu) \log (1+(\lambda/\mu))-\lambda$, $(\lambda\geq 0)$, and all $t\in (0,T)$,
\begin{equation}
\begin{aligned}
\int_{\Omega}\eta(u(t))\,dx+&\Lambda_1\delta\int_0^t\int_{\Omega}\dfrac{\vert \nabla u \vert^2}{q(u)}\,dx\,dt\,
\\[5pt]
& +\Lambda_1\int_0^t\int_{\Omega}|\nabla\mathcal{H} u|^2\,dxdt\leq   \int_{\Omega}\eta(u_{0\delta})dx
\label{eq:Fu}
\end{aligned}
\end{equation}

\item[$(4)$] The second energy estimate: For all $0 < t_1 < t_2 < T$, 
\begin{equation}
\begin{aligned} 
\frac{1}{2}\int_{\Omega}&|\mathcal{H} u(t_2,x)|^2\,dx+\Lambda_1\delta \int_{t_1}^{t_2}\int_{\Omega}|\nabla\mathcal{H} u|^2\,dxdt
 \\[5pt]
&+\Lambda_1\int_{t_1}^{t_2}\int_{\Omega} q(u) \left|\nabla \mathcal{K} u\right|^2\,dxdt
  \leq \frac{1}{2}\int_{\Omega}|\mathcal{H} u(t_1,x)|^2 \ dx
\end{aligned}
\label{eq:2energystimate}
\end{equation}

\item[$(5)$] For all $v\in H^1_0(\Omega)$,
\begin{equation}
\begin{aligned}
\hspace{-0,56cm}\int^T_0 \langle \partial_t u(t), v\rangle dt=-\delta&\iint_{\Omega_T}A(x)\nabla u\cdot\nabla v\,dxdt 
\\[5pt]
&+ \iint_{\Omega_T} q(u)A(x)\nabla \mathcal{K}u\cdot \nabla v \,dxdt.
\end{aligned}
\label{eq:L2Hs-1}
\end{equation}
where $\langle\cdot,\cdot\rangle$ denote the pairing between $H^{-1}(\Omega)$ and $H^1_0(\Omega)$.
\end{enumerate}
\end{theorem}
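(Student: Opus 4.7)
The plan is to derive each item by testing the strong equation \eqref{eq:aproxequation1} against a suitable quantity and using the regularity $u=u_{\mu,\delta}\in C^2(\Omega_T)\cap C(\overline{\Omega}_T)$ together with the boundary condition $u=0$ on $(0,T)\times\partial\Omega$; the fractional pieces reduce to Proposition \ref{proKuHu}, Remark \ref{remarkAx} and Lemma \ref{lem:semihet}.

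For item $(1)$, I multiply \eqref{eq:aproxequation1} by $\phi\in C^\infty_c((-\infty,T)\times\R^n)$ and integrate over $\Omega_T$. The time derivative is handled by integration in $t$ together with $\phi(T,\cdot)=0$, producing $-\iint u\,\partial_t\phi+\int u_{0\delta}\phi(0,x)$. For the $\delta$-term I integrate by parts twice in space: the first step gives $\delta\iint A(x)\nabla u\cdot\nabla\phi$ minus $\delta\int_{(0,T)\times\partial\Omega}(A\nabla u\cdot\nu)\phi$; the second step turns this into $-\delta\iint u\,\dive(A\nabla\phi)=\delta\iint u\,\mathcal{L}\phi$ plus a second boundary term that vanishes because $u=0$ on $\partial\Omega$. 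The right-hand side is treated by a single integration by parts, and the resulting boundary integral vanishes since $\mathcal{K}u\in H^1_0(\Omega)$ forces, through $u=0$ on $\partial\Omega$ and standard regularity, the cancellation of the boundary contributions. Item $(5)$ is obtained in the same fashion by choosing $\phi(t,x)=\chi(t)v(x)$ with $v\in H^1_0(\Omega)$ and reading the identity against the $H^{-1}$/$H^1_0$ duality.

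For item $(2)$, the $L^\infty$ bound and non-negativity are the same argument given formally in Section \ref{Basic estimates}, now rigorous because $u$ is classical: at an interior maximum $(t_0,x_0)$ one has $\nabla u(t_0,x_0)=0$ and $-\dive(A(\cdot)\nabla\mathcal{K}u)(t_0,x_0)=\mathcal{L}^{1-s}u(t_0,x_0)\ge 0$ by Lemma \ref{lem:semihet}(1) combined with the (classical) parabolic maximum principle applied to $v=e^{-t\mathcal{L}}u(t_0,\cdot)$, while $-\delta\dive(A\nabla u)(t_0,x_0)\ge0$ by the uniform ellipticity of $A$; hence $\partial_t u(t_0,x_0)\le0$ and $\|u(t)\|_\infty\le\|u_{0\delta}\|_\infty\le\|u_0\|_\infty$. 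The same argument reversed in sign gives non-negativity (one tests with $\varphi'_\delta(u)$ and invokes Lemma \ref{lem:semihet} exactly as in Section \ref{Basic estimates}, noting that boundary terms all vanish because $u=0$ on $\partial\Omega$). Conservation of mass follows by integrating \eqref{eq:aproxequation1} over $\Omega$ and using the divergence theorem: the boundary flux $\delta\int_{\partial\Omega} A\nabla u\cdot\nu + \mu\int_{\partial\Omega} A\nabla\mathcal{K}u\cdot\nu$ vanishes either by testing a time-mollified characteristic function of $\Omega$ in $(1)$ or by the conservation identity for the classical solution, coupled with $\int_\Omega u_{0\delta}\le\int_\Omega u_0$.

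For item $(3)$, I test \eqref{eq:aproxequation1} with $\eta'(u)=\log(1+u/\mu)$, observing $\eta''(u)=1/q(u)$ and $\eta'(0)=0$. The time term gives $\frac{d}{dt}\int_\Omega\eta(u)$; the $\delta$-term becomes, after integration by parts, $\delta\int_\Omega\eta''(u)A(x)\nabla u\cdot\nabla u\,dx\ge\Lambda_1\delta\int_\Omega|\nabla u|^2/q(u)$ (the boundary term vanishes by $\eta'(0)=0$); the nonlinear flux yields $-\int_\Omega\eta''(u)q(u)A(x)\nabla u\cdot\nabla\mathcal{K}u\,dx=-\int_\Omega A(x)\nabla u\cdot\nabla\mathcal{K}u\,dx$, which by Remark \ref{remarkAx} is bounded above by $-\Lambda_1\int_\Omega|\nabla\mathcal{H}u|^2$. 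Integrating in $t$ yields \eqref{eq:Fu}. For item $(4)$ I test with $\mathcal{K}u$: the time term becomes $\tfrac12\frac{d}{dt}\int_\Omega|\mathcal{H}u|^2$ since $\mathcal{K}=\mathcal{H}^2$ and $\mathcal{H}$ is self-adjoint; the $\delta$-term, after integration by parts (the boundary term vanishes because $\mathcal{K}u\in H^1_0(\Omega)$), gives $\delta\int_\Omega A(x)\nabla u\cdot\nabla\mathcal{K}u\,dx\ge\Lambda_1\delta\int_\Omega|\nabla\mathcal{H}u|^2$ by Proposition \ref{proKuHu}(2); the flux term gives $-\int_\Omega q(u)A(x)\nabla\mathcal{K}u\cdot\nabla\mathcal{K}u\,dx\le-\Lambda_1\int_\Omega q(u)|\nabla\mathcal{K}u|^2$.

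The only delicate step I expect is the bookkeeping of the boundary integrals in $(1)$: because $\phi$ does not vanish on $\partial\Omega$, one must verify that the boundary flux contributions from $-\delta\dive(A\nabla u)$ and from $\dive(q(u)A\nabla\mathcal{K}u)$ either cancel or are absent thanks to $u=0$, $\mathcal{K}u=0$ on $\partial\Omega$ together with the classical regularity of $u_{\mu,\delta}$; once this is handled, items $(2)$--$(5)$ are routine energy computations based on the identities already available in Section \ref{BN}.
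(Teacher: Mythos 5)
Your treatment of items (3)--(5) coincides in substance with the paper's: test the equation with $\eta'(u)$, with $\mathcal{K}u$, and with $\chi(t)v(x)$, then use the ellipticity \eqref{unifellip} together with Proposition \ref{proKuHu}(2) and Remark \ref{remarkAx}. The one difference is that for \eqref{eq:2energystimate} the paper multiplies by $\xi_k\mathcal{K}u$, with $\xi_k$ from \eqref{eq:xi} vanishing on $\partial\Omega$, and passes to the limit through Lemma \ref{Lemma:xik}; this is done precisely because $u=u_{\mu,\delta}$ is only in $C^2(\Omega_T)\cap C(\bar{\Omega}_T)$, so $A\nabla u\cdot\nu$ and $A\nabla\mathcal{K}u\cdot\nu$ need not have meaningful traces and your integration by parts up to $\partial\Omega$ in item (4) is not licensed by the stated regularity. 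That part of your argument is repairable by the same cutoff device.

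The genuine gap is in item (1), and with it the mass conservation \eqref{eq:consermassaprox}. Carrying out your two integrations by parts honestly, what you obtain is \eqref{eq:thequiaproxsol} up to the residual boundary term
\[
\int_0^T\!\!\int_{\partial\Omega}\bigl(\delta\,A\nabla u\cdot\nu+\mu\,A\nabla\mathcal{K}u\cdot\nu\bigr)\,\phi\,d\clg{H}^{n-1}dt ,
\]
since on $\partial\Omega$ one has $q(u)=u+\mu=\mu$, not $0$. Your justification that the boundary contribution vanishes ``since $\mathcal{K}u\in H^1_0(\Omega)$'' confuses the vanishing of $\mathcal{K}u$ (and of $u$) on $\partial\Omega$ with the vanishing of the conormal derivatives $A\nabla u\cdot\nu$ and $A\nabla\mathcal{K}u\cdot\nu$; the paper's Remark \ref{REMMEX} warns exactly against this confusion. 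The vanishing of this total boundary flux is not automatic: it is equivalent to the conservation of mass \eqref{eq:consermassaprox} (integrate the equation over $\Omega$), so your suggestion to recover mass conservation ``by testing a time-mollified characteristic function of $\Omega$ in (1)'' is circular. You flag the boundary bookkeeping as the delicate step but supply no argument that closes it. The paper does not close it here either: it explicitly omits the proofs of (1)--(2), importing them from Theorem 4.2 of \cite{Hua}, where the approximate problem is analyzed so that this no-flux property holds. As written, then, your proposal establishes (3)--(5) along the paper's lines but leaves (1) and the mass-conservation part of (2) unproved, with the stated cancellation claim being incorrect as a justification.
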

\smallskip
\begin{proof}
The first part of the theorem (up to \eqref{eq:consermassaprox}) is analogous to the theorem 4.2 \cite{Hua} and therefore we omit the proofs. We will show \eqref{eq:Fu}- \eqref{eq:L2Hs-1}.

\medskip
(1) To get the first energy estimate (\ref{eq:Fu}), we multiply 
equation \eqref{eq:aproxequation1}
by $\eta'(u)$ and integrate on $\Omega$. Then, after integration by parts and taking into account that $\eta'(0)=0$, we have
$$
   \dfrac{\partial}{\partial t}\int_{\Omega}\eta(u)dx=-\delta\int_{\Omega}
   \dfrac{1}{q(u)} A(x)\nabla u\cdot\nabla u  \ dx
   - \int_{\Omega}A(x)\nabla\mathcal{K}u\cdot\nabla u\,dx.
$$

Then, we integrate over $(0,t)$, for all $0 < t < T$, to obtain 
$$
   \int_{\Omega}\eta(u(t))dx+\delta\int_0^t\int_{\Omega}
   \dfrac{1}{q(u)} A(x)\nabla u\cdot\nabla u  \ dx
   + \int_0^t\int_{\Omega}A(x)\nabla\mathcal{K}u\cdot\nabla u\,dx=\int_{\Omega}\eta(u_0)dx.
$$

On the other hand, due to the uniform ellipticity condition we have an estimate for the second term of the left hand side 
$$
\Lambda_1\int_0^t\int_{\Omega}\dfrac{|\nabla u |^2}{q(u)}  \ dx\leq\int_0^t\int_{\Omega}\dfrac{1}{q(u)} A(x)\nabla u\cdot\nabla u  \ dx
%\leq\Lambda_2 
%\int_0^t\int_{\Omega}\dfrac{|\nabla u |^2}{q(u)}  \ dx,
$$
and for the third term of the left hand side, we use proposition \ref{proKuHu} item (2), thus we obtain \eqref{eq:Fu}.

\medskip
(2) To prove \eqref{eq:2energystimate}, we multiply \eqref{eq:aproxequation1} by $\xi_k\mathcal{K}u$,
integrate over $\Omega$ and take into account that $\xi_k= 0$ on $\partial\Omega$. Then, we have
$$
\int_{\Omega}\xi_k\frac{\partial u}{\partial t}\mathcal{K}u\,dx+\delta\int_{\Omega}A(x)\nabla u\cdot\nabla(\xi_k\mathcal{K}u)\,dx 
+\int_{\Omega} q(u)A(x)\nabla \mathcal{K} u\cdot\nabla (\xi_k\mathcal{K}u)\,dx=0.
$$

Passing to the limit as $k \to \infty$ and using Lemma \ref{Lemma:xik}, it follows that
$$
\frac{1}{2}\frac{\partial}{\partial t}\int_{\Omega}|\mathcal{H}u(t)|^2dx
+\delta\int_{\Omega}A(x)\nabla u\cdot\nabla\mathcal{K}u\,dx 
+\int_{\Omega} q(u)A(x)\nabla \mathcal{K} u\cdot\nabla \mathcal{K}u\,dx=0.
$$

Then, integrating over $(t_1,t_2)$  we get
$$
\begin{aligned}
\frac{1}{2}\int_{\Omega}|\mathcal{H}u(t_2,x)|^2dx.+\delta\int_{t_1}^{t_2}\int_{\Omega}A(x)\nabla u\cdot\nabla\mathcal{K}u\,dx 
&+
\\[5pt]
\int_{t_1}^{t_2}\int_{\Omega} q(u)A(x)\nabla \mathcal{K} u\cdot\nabla \mathcal{K}u\,dx&=
\frac{1}{2}\int_{\Omega}|\mathcal{H}u(t_1,x)|^2dx.
\end{aligned}
$$

On the other hand, from the uniform ellipticity condition we have and estimate for the third term of the left hand side 
$$
\Lambda_1 
\int_{t_1}^{t_2}\int_{\Omega}q(u)|\nabla \mathcal{K}u |^2  \ dx\leq\int_{t_1}^{t_2}\int_{\Omega}q(u) A(x)\nabla \mathcal{K}u\cdot\nabla \mathcal{K} u  \ dx
%\leq\Lambda_2 
%\int_{t_1}^{t_2}\int_{\Omega}q(u)|\nabla \mathcal{K}u |^2  \ dx,
$$
and for the second term of the left hand side, we use the remark \ref{remarkAx}.
Therefore we get the second energy estimate \eqref{eq:2energystimate}, for all $0<t_1<t_2<T$.

\medskip
(3) It remains to show \eqref{eq:L2Hs-1}, which follows 
applying the same techniques above, so the proof is omitted. 
Hence the proof of the Theorem \ref{Theaprox} is complete.
\end{proof}

%%%%%%%%%%%%%%%%%%%%
\subsection{Limit transition}
%%%%%%%%%%%%%%%%%%%%
Here we pass to the limit in \eqref{eq:thequiaproxsol}, as the two parameters $\delta$, $\mu$ go to zero. To show that we use the first and the second energy estimates together with the Aubin-Lions' Theorem. After that we apply the Theorem \ref{THEQUIVA} to prove the existence of solution

\smallskip
As a first step, we define $u_\delta:= u_{\mu,\delta}$ (fixing $\mu> 0$). 
Then, we have the following 
\begin{proposition}
\label{Thprinc}
Let $\{u_\delta\}_{\delta>0}$ be the classical solutions of \eqref{eq:aproxequation1}--\eqref{eq:aproxequation3}. 
%Let $u_0\in L^{\infty}(\Omega)$, $u_0\geq 0$ and $\{u_\delta\}_{\delta>0}$ 
%is defined above. 
Then, there exists a subsequence of $\{u_\delta\}_{\delta>0}$, which 
weakly converges to some function $u\in  L^2\big((0,T); D\big(\mathcal{L}^{(1-s)/2}\big)\big) \cap L^{\infty}(\Omega_T)$,
satisfying
\begin{equation}
\begin{aligned}
\iint_{\Omega_T}u(t,x)\partial_t\varphi(t,x)&+\int_{\Omega}u_0(x)\varphi(0,x)dx
\\[5pt]
&=\iint_{\Omega_T} q(u(t,x)) A(x)\nabla\mathcal{K}u(t,x)\cdot\nabla \varphi(t,x) dxdt.
\end{aligned}
\label{eq:thequiaproxso13}
\end{equation}
For all $\varphi\in C^{\infty}_c((-\infty,T)\times\Bbb{R}^n)$
\end{proposition}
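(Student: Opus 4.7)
The strategy is to derive uniform (in $\delta$, with $\mu>0$ fixed) bounds on the family $\{u_\delta\}$ from the estimates of Theorem~\ref{Theaprox}, invoke the Aubin--Lions compactness theorem to extract a strongly convergent subsequence, and then pass to the limit in each term of the weak formulation \eqref{eq:thequiaproxsol}.

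First, I would collect the relevant uniform bounds. Estimate \eqref{eq:+ebounded} gives $\|u_\delta\|_{L^\infty(\Omega_T)} \leq \|u_0\|_\infty$. Observing that $\mathcal{L}^{1/2}\mathcal{H} = \mathcal{L}^{(1-s)/2}$, the control of $\nabla\mathcal{H} u_\delta$ in $L^2(\Omega_T)$ provided by \eqref{eq:Fu} combines with \eqref{normequi} and Corollary~\ref{poincare} to yield a uniform bound
\begin{equation*}
\|u_\delta\|_{L^2(0,T;D(\mathcal{L}^{(1-s)/2}))} \leq C.
\end{equation*}
Since $q(u_\delta)\geq \mu>0$, the second energy estimate \eqref{eq:2energystimate} gives $\nabla\mathcal{K} u_\delta$ uniformly bounded in $L^2(\Omega_T)^n$, and in turn $q(u_\delta)\nabla\mathcal{K} u_\delta$ uniformly bounded in $L^2(\Omega_T)^n$ thanks to the $L^\infty$-bound on $q(u_\delta)$. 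Feeding these controls together with $\sqrt{\delta}\,\nabla u_\delta \in L^2(\Omega_T)^n$ (from \eqref{eq:Fu}) into the weak formulation \eqref{eq:L2Hs-1} shows that $\{\partial_t u_\delta\}$ is uniformly bounded in $L^2(0,T;H^{-1}(\Omega))$.

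Next I would apply the Aubin--Lions compactness theorem to the triple $D(\mathcal{L}^{(1-s)/2}) \hookrightarrow L^2(\Omega) \hookrightarrow H^{-1}(\Omega)$, the first embedding being compact via the fractional Sobolev characterization of Proposition~\ref{represendoDominio}. Up to a subsequence this provides a limit $u\in L^2(0,T;D(\mathcal{L}^{(1-s)/2})) \cap L^\infty(\Omega_T)$ with $u_\delta \to u$ strongly in $L^2(\Omega_T)$ and pointwise a.e., $u_\delta \overset{*}{\rightharpoonup} u$ in $L^\infty(\Omega_T)$, and $u_\delta \rightharpoonup u$ weakly in $L^2(0,T;D(\mathcal{L}^{(1-s)/2}))$. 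Since $\nabla\mathcal{K}$ is continuous from $D(\mathcal{L}^{(1-s)/2})$ into $(L^2(\Omega))^n$ (see Proposition~\ref{proKuHu} and Remark~\ref{remarkAx}), it follows that $\nabla\mathcal{K} u_\delta \rightharpoonup \nabla\mathcal{K} u$ weakly in $L^2(\Omega_T)^n$.

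The final task is to pass to the limit in \eqref{eq:thequiaproxsol}. The time-derivative term converges from the strong $L^2(\Omega_T)$-convergence of $u_\delta$; the viscosity term $\delta\iint u_\delta\,\mathcal{L}\varphi$ vanishes as $\delta\to 0$ since $u_\delta$ is uniformly bounded and $\mathcal{L}\varphi$ is fixed; and the initial-data term converges by the $L^1(\Omega)$-convergence $u_{0\delta}\to u_0$ built into the approximation. The main obstacle, and the only genuinely nonlinear step, is the flux term
\begin{equation*}
\iint_{\Omega_T} q(u_\delta)\,A(x)\,\nabla\mathcal{K} u_\delta \cdot \nabla\varphi \; dxdt.
\end{equation*}
To handle it, the a.e.\ convergence $u_\delta \to u$ combined with the $L^\infty$ bound and the Dominated Convergence Theorem yields $q(u_\delta)\,A(x)\,\nabla\varphi \to q(u)\,A(x)\,\nabla\varphi$ \emph{strongly} in $L^2(\Omega_T)^n$, while $\nabla\mathcal{K} u_\delta \rightharpoonup \nabla\mathcal{K} u$ \emph{weakly} in $L^2(\Omega_T)^n$; the strong--weak duality then gives convergence of the product, producing \eqref{eq:thequiaproxso13} and completing the proof.
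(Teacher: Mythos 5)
Your proposal is correct and follows essentially the same route as the paper: the same uniform bounds coming from \eqref{eq:+ebounded}, \eqref{eq:Fu}, \eqref{eq:2energystimate} and \eqref{eq:L2Hs-1}, the Aubin--Lions theorem for strong $L^2(\Omega_T)$ compactness of $\{u_\delta\}$, and the weak--strong pairing of the weakly convergent $\nabla\mathcal{K}u_\delta$ with the strongly convergent factor involving $q(u_\delta)$ to pass to the limit in the nonlinear flux term. The only cosmetic difference is that you identify the weak limit of $\nabla\mathcal{K}u_\delta$ directly via the continuity of $\nabla\mathcal{K}$ on $D\big(\mathcal{L}^{(1-s)/2}\big)$, while the paper first extracts a weak limit $\textbf{v}$ and then verifies $\textbf{v}=\nabla\mathcal{K}u$; the content is the same.
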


\begin{proof}
The idea of the proof of \eqref{eq:thequiaproxso13} is to pass to the limit in 
\eqref{eq:thequiaproxsol} as $\delta \to 0^+$. Therefore we need to show compactness of the sequence $\{u_\delta\}_{\delta>0}$. From \eqref{eq:+ebounded}, it follows that 
$\{u_\delta\}_{\delta>0}$ is (uniformly) bounded in $L^{\infty}(\Omega_T)$.
Then, it is possible to select a subsequence, still denoted by $\{u_\delta\}$,
converging weakly-$\star$ to $u$ in $L^{\infty}(\Omega_T)$, i.e.
$$
\lim_{\delta\to 0^+}\int_{\Omega_T}u_\delta(t,x)\phi(t,x)\,dtdx=\int_{\Omega_T}u(t,x)\phi(t,x)\,dtdx,
$$
for all $\phi\in L^1(\Omega_T)$, which is enough to pass to the limit in the first integral in the left hand side of \eqref{eq:thequiaproxsol}. 

\medskip
Now, we study the convergence of the integral in right hand side of \eqref{eq:thequiaproxsol}. First, since $A(x)$ is symmetric,  it is sufficient to show $q(u_\delta)\nabla\mathcal{K}u_\delta$ converges weakly in $\big(L^2(\Omega_T) \big)^n$.  
The proof will be divede into two step. First weak convergence of $\nabla\mathcal{K}u_\delta$ and strong convergence of $u_\delta$ in $\big(L^2(\Omega_T) \big)^n$.
From \eqref{eq:2energystimate}, we have 
$$
    \iint_{\Omega_T} |\nabla \mathcal{K} u_{\delta}|^2 \,dxdt
    \leq \dfrac{C}{\mu},
$$
where $C$ is a positive constant which does not depend on $\delta$.
Therefore, the right-hand side is (uniformly) bounded in $L^2(\Omega_T)$ 
w.r.t. $\delta$. Thus we obtain (along suitable subsequence) that,
$\nabla\mathcal{K}u_\delta$ converges weakly to $\textbf{v}$ in $(L^2(\Omega_T))^n$.

\medskip
The next step is to show that $\textbf{v}= \nabla\mathcal{K}u$ in $(L^2(\Omega_T))^n$. 
First we prove the regularity of $u$. From the equivalent norm \eqref{normequi}  we deduce that 
\begin{equation}
\begin{aligned}
   \iint_{\Omega_T} \left|\mathcal{L}^{(1-s)/2}u_\delta(t,x)\right|^2dxdt &
 \leq\Lambda_2\iint_{\Omega_T} \vert\nabla \mathcal{H}u_{\delta}\vert^2dxdt.
\end{aligned}\nonumber
\end{equation}
 
 On the other hand, from \eqref{eq:Fu}, we obtain that $\nabla\mathcal{H}u_{\delta}$ is (uniformly) bounded in $\big(L^2(\Omega_T)\big)^n$ w.r.t. $\delta$.
Thus $\{u_\delta\}$ is (uniformly) bounded in  $L^2\big((0,T); D\big(\mathcal{L}^{(1-s)/2}\big)\big)$. Consequently, it is possible to select a 
subsequence, still denoted by $\{u_\delta\}$, converging weakly to $u$ in $L^2\big((0,T); D\big(\mathcal{L}^{(1-s)/2}\big)\big)$, 
where we have used the uniqueness of the limit. Therefore, using again \eqref{normequi} and  the Poincare's type inequality  (corollary \ref{poincare}), follow that
$$
\iint_{\Omega_T} |\nabla\mathcal{K}u(t,x)|^2 dxdt\leq\Lambda_1^{-1}\lambda_1^{-s}\iint_{\Omega_T}|\mathcal{L}^{(1-s)/2}u(t,x)|^2dxdt,
$$
where $\lambda_1$ is the first eigenvalue of $\mathcal{L}$. Thus, we obtain that $\nabla\mathcal{K}u\in \big(L^2(\Omega_T)\big)^n$, and 
hence $\nabla\mathcal{K}u_\delta$ converges weakly to 
$\nabla\mathcal{K}u$  in $\big(L^2(\Omega_T)\big)^n$.

\medskip
Recall that, we are proving the weak convergence of $q(u_\delta)\nabla\mathcal{K}u_\delta$  in $\big(L^2(\Omega_T) \big)^n$. 
Now, we  prove strong convergence for $\{u_\delta\}_{\delta>0}$ in $L^2(\Omega_T)$, here we apply the Aubin-Lions compactness Theorem.
Indeed, from \eqref{eq:Fu}--\eqref{eq:L2Hs-1} and the (uniform) boundedness of $\nabla\mathcal{K}u_\delta$ in  $\big(L^2(\Omega_T)\big)^n$, we have
\begin{equation}
\int^T_0 \left\Vert \partial_t u_\delta\right\Vert^2_{H^{-1}(\Omega)} dt \leq C \ ( \Vert u_0 \Vert_{\infty} + \mu).
\label{eq:L2H-14.2}
\end{equation}
Observe that, at this point $\mu>$ is fixed. Thus, the right-hand side of \eqref{eq:L2H-14.2} is  bounded in $L^2((0,T);H^{-1}(\Omega))$ 
w.r.t. $\delta$. Therefore, exist a subsequence, such that $\partial_t u_\delta$ converges weakly to $\partial_t u$ in 
$L^2(0,T;H^{-1}(\Omega))$. Then, applying the Aubin-Lions compactness Theorem (see \cite{Malek}, Lemma 2.48)
it follows that, $u_\delta$ converges to $u$ (along suitable subsequence) strongly in 
$L^2(\Omega_T)$ as $\delta$ goes to zero. Consequently,
$q(u_\delta) \nabla\mathcal{K}u_\delta$ converges weakly to $q(u)\nabla\mathcal{K}u$ as $\delta\to 0^+$.
Hence, the equality \eqref{eq:thequiaproxso13} follows. 
\end{proof}
\begin{corollary}\label{corprinc}
Let $u$ the function given by the proposition \eqref{Thprinc}, satisfies:
\item[$(1)$] For almost all $t \in (0,T)$, 
\begin{equation}
\label{eq:bounded4.2}
     \Vert u(t)\Vert_{\infty} \leq  \Vert u_0 \Vert_{\infty}, \quad \text{and} 
\end{equation}
\begin{equation}
\label{eq:consevationofmass4.2}
    \int_{\Omega}u(x,t)dx = \int_{\Omega}u_0(x)dx.
\end{equation}
Furthermore, $0\leq u(t,x)$ a.e in $\Omega_T$.

\item[$(2)$] First energy estimate: For 
$\eta(\lambda):= (\lambda+\mu) \log (1+(\lambda/\mu))-\lambda$, $(\lambda\geq 0)$, and almost all $t\in (0,T)$, 
\begin{equation}
\int_{\Omega}\eta(u(t)) dx+\Lambda_1\int_0^t\int_{\Omega}\vert \nabla \mathcal{H}u\vert^2 \ dxdt'
\leq\int_{\Omega}\eta(u_0) \ dx.
\label{energyinequa4.2}
\end{equation} 

\item[$(3)$] Second energy estimate:  For almost all $0< t_1< t_2< T$,
\begin{equation}
\frac{1}{2}\int_{\Omega}\vert \mathcal{H} u(t_2) \vert^2 dx + \Lambda_1\int_{t_1}^{t_2}\int_{\Omega} q(u) 
\vert \nabla \mathcal{K}u \vert^2 \,dx\,dt\leq\frac{1}{2}\int_{\Omega}\vert \mathcal{H} u(t_1) \vert^2 dx.\label{eq:2energystimate4.2}
\end{equation}

\item[$(4)$] For each $v \in H^1_0(\Omega)$,
\begin{equation}
\int^T_0 \langle \partial_t u, v\rangle dt= \iint_{\Omega_T} q(u)A(x)\nabla \mathcal{K}u\cdot \nabla v \,dxdt.\label{estimateH-14.2}
\end{equation}
where $\langle\cdot,\cdot\rangle$ denote the pairing between $H^{-1}(\Omega)$ and $H^1_0(\Omega)$.
\end{corollary}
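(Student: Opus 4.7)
The plan is to derive each of (1)--(4) by letting $\delta\to 0^+$ in the corresponding statement of Theorem~\ref{Theaprox}, exploiting the convergences already assembled in the proof of Proposition~\ref{Thprinc}: $u_\delta\overset{\ast}{\rightharpoonup}u$ in $L^\infty(\Omega_T)$, $u_\delta\to u$ strongly in $L^2(\Omega_T)$ (hence, up to a subsequence, a.e.\ in $\Omega_T$), $\nabla\mathcal{H}u_\delta\rightharpoonup\nabla\mathcal{H}u$ and $\nabla\mathcal{K}u_\delta\rightharpoonup\nabla\mathcal{K}u$ in $(L^2(\Omega_T))^n$, and $\partial_t u_\delta\rightharpoonup\partial_t u$ in $L^2((0,T);H^{-1}(\Omega))$.

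For item (1), the non-negativity $u\geq 0$ a.e.\ is immediate from $u_\delta\geq 0$ and the a.e.\ convergence. The bound \eqref{eq:bounded4.2} follows from weak-$\ast$ lower semicontinuity of $\Vert\cdot\Vert_{L^\infty}$ applied to \eqref{eq:+ebounded}. For the mass identity \eqref{eq:consevationofmass4.2}, I test the limit identity \eqref{eq:thequiaproxso13} against $\varphi(t,x)=\chi_\eta(t)\,\psi(x)$ with $\psi\in C_c^\infty(\R^n)$ equal to $1$ on $\overline{\Omega}$ and $\chi_\eta\in C_c^\infty((-\infty,T))$ a smooth approximation of $\mathbf{1}_{(-\infty,t]}$ at a Lebesgue point $t$ of $s\mapsto\int_\Omega u(s,x)\,dx$; the drift term vanishes because $\nabla\psi\equiv 0$ on $\Omega$, and the remaining two terms give $\int_\Omega u(t)\,dx=\int_\Omega u_0\,dx$ after sending $\eta\to 0$ and using $u_{0\delta}\to u_0$ in $L^1(\Omega)$ already at the approximate level.

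Items (2) and (3) are inequalities, so the workhorse is weak (lower) semicontinuity together with the uniform $L^\infty$ bound. For \eqref{energyinequa4.2} I discard the non-negative $\delta$-viscous term in \eqref{eq:Fu}; at a.e.\ $t$, Fubini plus the uniform $L^\infty$ bound lets dominated convergence give $\int_\Omega\eta(u_\delta(t))\,dx\to\int_\Omega\eta(u(t))\,dx$ along the subsequence where $u_\delta\to u$ a.e., weak lower semicontinuity of $\Vert\cdot\Vert_{L^2}$ handles $\iint_0^t|\nabla\mathcal{H}u|^2$, and $\eta(u_{0\delta})\to\eta(u_0)$ in $L^1(\Omega)$ again by dominated convergence. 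For \eqref{eq:2energystimate4.2} the delicate term is $\iint q(u_\delta)|\nabla\mathcal{K}u_\delta|^2$: rewriting it as $\Vert\sqrt{q(u_\delta)}\,\nabla\mathcal{K}u_\delta\Vert_{L^2}^2$, the uniform $L^\infty$ bound and strong $L^2$ convergence of $u_\delta$ give $\sqrt{q(u_\delta)}\to\sqrt{q(u)}$ in every $L^p(\Omega_T)$ with $p<\infty$; combined with $\nabla\mathcal{K}u_\delta\rightharpoonup\nabla\mathcal{K}u$ in $(L^2)^n$ this yields $\sqrt{q(u_\delta)}\,\nabla\mathcal{K}u_\delta\rightharpoonup\sqrt{q(u)}\,\nabla\mathcal{K}u$ weakly in $(L^2)^n$, and weak lower semicontinuity of the $L^2$-norm squared closes the estimate. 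The boundary terms $\frac12\int|\mathcal{H}u(t_i)|^2$ pass by continuity of $\mathcal{H}:L^2\to L^2$ at almost every time slice.

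For item (4) I pass to the limit in \eqref{eq:L2Hs-1}: the time-derivative pairing converges by weak $L^2((0,T);H^{-1}(\Omega))$ convergence of $\partial_t u_\delta$, while the nonlinear term passes using $q(u_\delta)\,\nabla\mathcal{K}u_\delta\rightharpoonup q(u)\,\nabla\mathcal{K}u$ in $(L^2(\Omega_T))^n$ already established in Proposition~\ref{Thprinc}. The viscous contribution vanishes: from \eqref{eq:Fu} combined with $q(u_\delta)\leq\Vert u_0\Vert_\infty+\mu$ I extract that $\sqrt{\delta}\,\nabla u_\delta$ is bounded in $(L^2(\Omega_T))^n$, whence $\delta\iint A\nabla u_\delta\cdot\nabla v\,dx\,dt=\sqrt{\delta}\iint A(\sqrt{\delta}\,\nabla u_\delta)\cdot\nabla v\,dx\,dt=O(\sqrt{\delta})\to 0$. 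The main obstacle is the product $q(u_\delta)|\nabla\mathcal{K}u_\delta|^2$ in the second energy estimate, where both factors are only weakly convergent; the strong-times-weak split through $\sqrt{q(u_\delta)}$ resolves it. A secondary care is confining pointwise-in-time assertions to Lebesgue points of the relevant integral averages, which is routine once strong $L^2(\Omega_T)$ convergence is available.
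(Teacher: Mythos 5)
Your proposal is correct and follows exactly the standard limit-passage route that the paper itself omits (it simply cites \cite{Hua} for the analogous argument): weak-$\ast$ and weak lower semicontinuity for the bounds, a.e.\ time-slice convergence from the strong $L^2(\Omega_T)$ limit for the pointwise-in-time terms, the weak--strong product $\sqrt{q(u_\delta)}\,\nabla\mathcal{K}u_\delta$ for the second energy estimate, and the $O(\sqrt{\delta})$ bound on the viscous term from \eqref{eq:Fu} for item (4). I see no gaps; this is essentially the intended proof, written out in detail.
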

\begin{proof}
The proof of (\eqref{eq:bounded4.2} to \eqref{estimateH-14.2}) is standard, see \cite{Hua}, and therefore we omit the proofs.
\end{proof}
\begin{remark}
\label{remark4.2}
The function $u$ (obtained in the previous proposition) depends on the fixed parameter $\mu$.  
For each $\mu> 0$, we write from now on $u_\mu$ instead of $u$. 
\end{remark}

\begin{proof}[ \bf Proof of Theorem \ref{Thprincipal}]
To prove the existence of weak solution of the IBVP \eqref{FTPME}, we consider the sequence $\{u_\mu\}_{\mu>0}$, obtained in the proposition \ref{Thprinc}, which satisfies the corollary \ref{corprinc} for each $\mu> 0$, \eqref{eq:thequiaproxso13}--\eqref{estimateH-14.2}.

The idea of the proof is to pass to the limit in \eqref{eq:thequiaproxso13} as $\mu\to 0^+$, and obtain  
the solvability of the IBVP \eqref{FTPME}
applying the Equivalence Theorem \ref{THEQUIVA}. 

\medskip
From \eqref{eq:bounded4.2}, we see that 
$\{u_\mu\}_{\mu>0}$ is (uniformly) bounded in $L^{\infty}(\Omega_T)$ w.r.t $\mu$.
Hence, it is possible to select a subsequence, still denoted by $\{u_\mu\}$,
converging weakly-$\star$ to $u$ in $L^{\infty}(\Omega_T)$, 
%i.e.
%$$
%\lim_{\mu\to 0^+}\int_{\Omega_T}u_\mu(t,x)\phi(t,x)\,dtdx=\int_{\Omega_T}u(t,x)\phi(t,x)\,dtdx,
%$$
%for all $\phi\in L^1(\Omega_T)$, 
which is enough to pass to the limit in the first integral in the left hand side of \eqref{eq:thequiaproxso13}. 

\medskip
Now, we study the convergence of the integral in right hand side of \eqref{eq:thequiaproxso13}. First, since $A(x)$ is symmetric,  it is sufficient to show $q(u_\delta)\nabla\mathcal{K}u_\delta$ converges weakly in $\big(L^2(\Omega_T) \big)^n$. On the other hand, we recall that
$$
\begin{aligned}
\eta(\lambda)&= (\lambda+\mu)\log (1+\lambda/\mu)- \lambda, 
\\[5pt]
&=\lambda \log (\lambda+\mu)-\lambda\log \mu+\mu\log(1+\lambda/\mu)- \lambda, \qquad (\forall \lambda \geq 0)
\end{aligned}
$$
Then, from \eqref{eq:consevationofmass4.2} and \eqref{energyinequa4.2} we obtain for almost all $t\in (0,T)$
\begin{equation}
\begin{aligned}
    \Lambda_1\int_0^t\int_\Omega |\nabla \mathcal{H}u_\mu|^2 \ dxdt
    &+\int_\Omega u_\mu(t)\log(u_\mu(t)+\mu) \ dx
\\[5pt]
&\leq \int_{\Omega} u_0  \log(u_0+\mu) \ dx+\mu\int_{\Omega}  \log(1+u_0/\mu) \ dx,
\end{aligned}\label{eq:deslog}
\end{equation}
where we have used that $\mu\int_{\Omega}\log(1+u_\mu/\mu) \ dx\geq 0$ for all $\mu>0$.

\medskip
Since $f= f^+-f^-$, where $f^\pm= \max\{\pm f, 0\}$, it follows from \eqref{eq:deslog} that
$$
\begin{aligned}
   \Lambda_1\int_0^t\int_\Omega |\nabla \mathcal{H}u_\mu|^2 \ dxdt
   +\int_\Omega u_\mu(t) \log^{+}(u_\mu(t)&+\mu) \ dx
\\[5pt]
\leq \int_{\Omega} u_0 \log(u_0+\mu) \ dx&+\mu\int_{\Omega}  \log(1+u_0/\mu) \ dx
\\[5pt]
  &+\int_\Omega u_\mu(t) \log^-(u_\mu(t)+\mu)dx.
\end{aligned}
$$

Observe that the right hand side of the above inequality is bounded w.r.t. $\mu$ (small enough), because $u_\mu$ is bounded in $L^{\infty}(\Omega_T)$ w.r.t. $\mu$, 
and
$$
\int_\Omega u_\mu(t)\log^-(u_\mu(t)+\mu)dx
$$
is bounded w.r.t. $\mu$  (small enough).
Consequently, we have that
$\nabla \mathcal{H}u_{\mu}$ is (uniformly) bounded in $L^2(\Omega_T)$.

\medskip
On the other hand, using \eqref{normequi} and the Poincare inequality ( Corollary \ref{poincare} ) we deduce that 
\begin{equation}
\begin{aligned}
   \iint_{\Omega_T} \left|\nabla \mathcal{K}u_\mu(t,x)\right|^2dxdt&\leq \Lambda_1^{-1} \iint_{\Omega_T} \left|\mathcal{L}^{1/2-s}u_\mu(t,x)\right|^2dxdt
   \\[5pt]
& \leq\Lambda_1^{-1}\lambda_1^{-s}\iint_{\Omega_T} \vert\mathcal{L}^{1/2-s/2}u_\mu(t,x)\vert^2dxdt
\\[5pt]
&\leq \Lambda_1^{-1}\lambda_1^{-s}\Lambda_2\iint_{\Omega_T} \vert\nabla\mathcal{H}u_\mu(t,x)\vert^2dxdt.
\end{aligned}\nonumber
\end{equation}

Therefore, $\nabla \mathcal{K}u_\mu$ is (uniformly) bounded in $L^2(\Omega_T)$ w.r.t. $\mu$, and thus we obtain (along suitable subsequence) 
that $\nabla\mathcal{K}u_\mu$ converges weakly to $\textbf{v}$ in $\big(L^2(\Omega_T\big)^n$.
%i.e.
%$$
%\lim_{\mu \to 0}\int_0^T\int_{\Omega}\nabla\mathcal{K}u_{\mu}\cdot\Psi(x,t)\,dx\,dt=\int_0^T\int_{\Omega}\nabla\mathcal{K}u\cdot\Psi(x,t)\,dx\,dt
%$$
%where $\Psi(x,t)=(\psi_1, \psi_2, \cdots,\psi_n)$ and $\psi_i\in L^2(\Omega_T)$ for all $i=1,\cdots, n$.
It remains to show that $\textbf{v}=\nabla\mathcal{K}u$. 

Using the same ideas as in the proof of the proposition \ref{Thprinc}. Is is possible to select a subsequence, still denoted by $\{u_\mu\}$, converging weakly to $u$ in $L^2\left(0,T;D\big( \mathcal{L}^{(1-s)/2}\big)\right)$ such that $\textbf{v}=\nabla\mathcal{K}u$ in $\big(L^2(\Omega_T\big)^n$. Hence $\nabla\mathcal{K}u_\delta$ converges weakly to $\nabla\mathcal{K}u$  in $\big(L^2(\Omega_T\big)^n$.

\medskip
Now, we prove strong convergence for $\{u_\mu\}_{\mu>0}$ in $L^2(\Omega_T)$. To show that, we apply again the Aubin-Lions 
compactness Theorem. 
Since the coefficient $a_{i,j}$ of the matrix $A(x)$ is in $C^1(\hat{\Omega})$, together with the boundedness of $\nabla\mathcal{K}u_\mu$ in $L^2(\Omega_T)$, 
and the uniform limitation of $u_\mu$, we deduce from \eqref{estimateH-14.2} the following
we have
\begin{equation}
\int^T_0 \left\Vert \partial_t u_\mu \right\Vert^2_{H^{-1}(\Omega)} dt \leq C.
\label{eq:L2H-14.3}
\end{equation}
Passing to a subsequence (still denoted by $\{u_\mu\}$), we obtain that 
$$
  \text{$\partial_t u_\mu$ converges weakly to
 $\partial_t u$ in $L^2(0,T;H^{-1}(\Omega))$.}
$$
Applying the Aubin-Lions compactness Theorem, it follows that $u_\mu$ converges strongly to $u$ (along suitable sequence) in $L^2(\Omega_T)$.
Consequently, we obtain that $q(u_\mu)\nabla\mathcal{K}u_\mu$ converges weakly to $u \ \nabla\mathcal{K}u$ as $\mu\to 0^+$. Then, we are ready to pass to the limit in \eqref{eq:thequiaproxso13} as $\mu\to 0^+$ to get 
\begin{equation}
    \iint_{\Omega_T} u(t,x) \big( \partial_t\varphi(t,x) -  A(x)\nabla \mathcal{K}(u(t,x)) \cdot \nabla \varphi(t,x) \big)  \ dxdt
   +\int_{\Omega}u_0(x)\varphi(0,x)dx=0,\nonumber
\end{equation}
for all $\vp \in C^{\infty}_c((-\infty,T)\times\Bbb{R}^n)$. According to the Equivalence Theorem \ref{THEQUIVA}, 
we obtain the solvability of the IBVP \eqref{FTPME}.
\end{proof}

\begin{corollary}
The weak solution $u$ of the IBVP \eqref{FTPME} given by Theorem \ref{Thprincipal},
satisfies: 
\begin{enumerate}
\item[$(1)$]
For almost all $t\in (0,T)$, we have
\begin{equation}
\Vert u(t)\Vert_{\infty} \leq  \Vert u_0 \Vert_{\infty}, \quad \text{and} \label{eq:bounded4.3}
\end{equation}
\begin{equation}
\int_{\Omega}u(x,t)dx = \int_{\Omega}u_0(x) \ dx. \label{eq:consevationofmass4.3}
\end{equation}
Moreover, $0\leq u(t,x)$ a.e. in $(0,T)\times \Omega$.
\item[$(2)$] First energy estimate: For almost all  $t \in (0,T)$, 
\begin{equation}
    \Lambda_1\int^t_0\int_{\Omega}\vert \nabla \mathcal{H}u \vert^2 \ dxdt'
   +\int_{\Omega} u(t) \log(u(t)) \ dx \leq \int_{\Omega} u_0 \log(u_0) \ dx.\label{energyinequa4.3}
\end{equation}
\item[$(3)$] Second energy estimate: For almost all $0<t_1<t_2<T$,
\begin{equation}
\frac{1}{2}\int_{\Omega}\vert \mathcal{H} u(t_2) \vert^2 \ dx  + 
\Lambda_1\int_{t_1}^{t_2} \int_{\Omega} u\vert \nabla \mathcal{K}u\vert^2 \,dx\,dt \leq\frac{1}{2}\int_{\Omega}\vert \mathcal{H} u(t_1) \vert^2 dx.
\label{eq:2energystimate4.3}
\end{equation}
\end{enumerate}
\end{corollary}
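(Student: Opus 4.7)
The plan is to obtain all three items by passing to the limit $\mu\to 0^+$ in the corresponding estimates of Corollary \ref{corprinc} applied to the approximating sequence $\{u_\mu\}$, relying exclusively on the convergences already established in the proof of Theorem \ref{Thprincipal}. Recall that along a suitable subsequence we have $u_\mu\to u$ weakly-$\star$ in $L^\infty(\Omega_T)$ and strongly in $L^2(\Omega_T)$ (hence pointwise a.e.), $\nabla\mathcal{H}u_\mu\rightharpoonup\nabla\mathcal{H}u$ and $\nabla\mathcal{K}u_\mu\rightharpoonup\nabla\mathcal{K}u$ in $(L^2(\Omega_T))^n$, and for a.e.\ $t\in(0,T)$ we may extract a further subsequence with $u_\mu(t,\cdot)\to u(t,\cdot)$ in $L^2(\Omega)$ and a.e.

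For item (1), the $L^\infty$ bound \eqref{eq:bounded4.3} follows from the lower semicontinuity of the $L^\infty$ norm under weak-$\star$ convergence, applied slicewise. The non-negativity $u\geq 0$ a.e.\ is immediate from $u_\mu\geq 0$ and the strong (hence a.e.) convergence. Mass conservation \eqref{eq:consevationofmass4.3} is obtained by testing \eqref{eq:consevationofmass4.2} against a cut-off of the form $\chi_{[t_1,t_2]}(t)$, passing to the limit via the weak-$\star$ convergence on $(t_1,t_2)\times\Omega$, and then using Lebesgue differentiation.

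For item (2), the first energy estimate, I would start from \eqref{energyinequa4.2} written as
$$\Lambda_1\int_0^t\!\!\int_\Omega|\nabla\mathcal{H}u_\mu|^2\,dxdt'+\int_\Omega u_\mu(t)\log(u_\mu(t)+\mu)\,dx\leq \int_\Omega u_0\log(u_0+\mu)\,dx+\mu\int_\Omega\log(1+u_0/\mu)\,dx,$$
using $\eta(\lambda)=\lambda\log(\lambda+\mu)-\lambda\log\mu+\mu\log(1+\lambda/\mu)-\lambda$ and \eqref{eq:consevationofmass4.2} to cancel the $-\lambda\log\mu-\lambda$ terms. The gradient term is handled by weak lower semicontinuity: $\int_0^t\!\!\int_\Omega|\nabla\mathcal{H}u|^2\leq\liminf_\mu \int_0^t\!\!\int_\Omega|\nabla\mathcal{H}u_\mu|^2$. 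For the entropy term, I would use the pointwise convergence $u_\mu\to u$ together with dominated convergence (since $u_\mu$ is uniformly bounded in $L^\infty$ and $\lambda\log(\lambda+\mu)\to \lambda\log\lambda$, with $\mu\log(1+u_0/\mu)\to 0$ a.e.\ and bounded by $u_0$). This yields \eqref{energyinequa4.3} for a.e.\ $t$.

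For item (3), the second energy estimate \eqref{eq:2energystimate4.3}, I pass to the limit in \eqref{eq:2energystimate4.2}. The quadratic term $\frac12\int_\Omega|\mathcal{H}u_\mu(t)|^2\,dx$ is treated slicewise (strong $L^2$ convergence of $u_\mu(t)$ combined with continuity of $\mathcal{H}$ on $L^2(\Omega)$ gives convergence at a.e.\ $t$). The main difficulty, and the step I expect to be most delicate, is handling the nonlinear dissipation $\int_{t_1}^{t_2}\!\!\int_\Omega q(u_\mu)|\nabla\mathcal{K}u_\mu|^2\,dxdt=\int_{t_1}^{t_2}\!\!\int_\Omega(u_\mu+\mu)|\nabla\mathcal{K}u_\mu|^2\,dxdt$: the factor $u_\mu$ converges only strongly in $L^2$, while $\nabla\mathcal{K}u_\mu$ converges only weakly in $L^2$, so the product $\sqrt{u_\mu}\nabla\mathcal{K}u_\mu$ is a priori only weakly convergent and one needs a lower-semicontinuity argument. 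My plan is to write $\sqrt{u_\mu+\mu}\,\nabla\mathcal{K}u_\mu$ and identify its weak $L^2$ limit as $\sqrt{u}\,\nabla\mathcal{K}u$ by combining the strong convergence $\sqrt{u_\mu+\mu}\to\sqrt{u}$ in $L^p(\Omega_T)$ (for any $p<\infty$, from pointwise convergence and uniform $L^\infty$ bound) with the weak convergence of $\nabla\mathcal{K}u_\mu$; then weak $L^2$ lower semicontinuity concludes. The $\mu\int|\nabla\mathcal{K}u_\mu|^2$ piece vanishes since $\nabla\mathcal{K}u_\mu$ is uniformly bounded in $L^2$. This furnishes \eqref{eq:2energystimate4.3} and completes the proof.
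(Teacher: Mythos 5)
Your proposal is correct and is essentially the argument the paper intends: the paper itself omits the proof, calling it standard and citing \cite{Hua}, and that standard proof is exactly your passage to the limit $\mu\to 0^+$ in the estimates of Corollary \ref{corprinc}, using the convergences (weak-$\star$ in $L^\infty$, strong in $L^2$, weak in $L^2$ for $\nabla\mathcal{H}u_\mu$ and $\nabla\mathcal{K}u_\mu$) already established in the proof of Theorem \ref{Thprincipal}. Your handling of the two delicate points --- the entropy term via pointwise convergence plus dominated convergence after the rewriting \eqref{eq:deslog}, and the dissipation term via the weak $L^2$ limit of $\sqrt{u_\mu+\mu}\,\nabla\mathcal{K}u_\mu$ combined with weak lower semicontinuity --- is exactly the standard route the citation refers to.
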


\begin{proof} The proof of (\eqref{eq:bounded4.3} to \eqref{eq:2energystimate4.3}) is standard, see \cite{Hua}.
\end{proof}

%%%%%%%%%%%%%%%%%%%%%%%%
%%%%%%%%%%%%%%%%%%%%%%%%
 \section*{Acknowledgements}
%%%%%%%%%%%%%%%%%%%%%%%%
The author wishes to  thanks 
the Federal University of Alagoas, where the paper was
written, for the invitation and hospitality. This work was supported by CAPES.
%%%%%%%%%%%%%%%%%%%%%%


\begin{thebibliography}{99}
%%%%%%%%%%%%%%%%%%%%%%
%\bibitem{Adam}  {\sc R. A. Adams, J. F. Fournier}, {\em Sobolev spaces. Second edition}, Pure and Applied Mathematics (Amsterdam), 140 Elsevier/Academic Press, Amsterdam, 2003.

\bibitem{HAJPSR} {\sc H. Antil, J. Pfefferer, S. Rogovs}, {\em Fractional operators with inhomogeneous boundary conditions:
Analysis, Control, and Discretization}, arXiv:1703.05256, 2017.


\bibitem{BIK} {\sc P. Biler, C. Imbert, G. Karch }, {\em The nonlocal porous medium equation: Barenblatt profiles and other weak solution}, Arch. Ration. Mech. Anal. 215(2015), 497529. 

\bibitem{BKM} {\sc P. Biler, G. Karch, R. Monneau}, {\em Nolinear diffusion of dislocation density and self-similar solution}, Comm. Math. Phys., 294 (2010), 145168.

\bibitem{MSireVazquez} {\sc M. Bonforte, Y. Sire, J. L. V\'azquez}, {\em Existence, Uniqueness and Asymptotic behaviour
for fractional porous medium equations on bounded domains}, Manuscript submitted to AIMS' Journals.

\bibitem{CSV} {\sc L. Caffarelli, F. Soria, J.L. Vazquez}, {\em Regularity of solution of the fractional porous medium flow} J. Eur. Math. Soc. (JEMS) 155 (2013), 17011746.

\bibitem{Caffa} {\sc L. Caffarelli, J.L. Vazquez}, {\em Nonlinear Porous Medium Flow with Fractional Potential Pressure}, Arch. Rational Mech. Anal. 202 (2011), 537--565.

\bibitem{CV2} {\sc L. Caffarelli, J.L. Vazquez}, {\em Regularity of solution of the fractional porous medium flow in the exponent $1/2$}, St Petersburg Math Journal 27(2016), no 3, 437460.

%\bibitem{Caff} {\sc L. Caffarelli, L. Silvestre}, {\em An extension problem related to the fractional Laplacian}, Comm. Partial Differential Equations 32 (2007), no. 7-9, 1245-1260.

\bibitem{LCPRS} {\sc L. Caffarelli, P.R. Stinga}, {\em Fractional elliptic equations, Caccioppoli estimates and regularity},
Annales de l'Institut Henri Poincaré C, Analyse Non Linéaire, 33 (2016), 767--807.

\bibitem{Capella} {\sc A. Capella, J. D\'avila, L. Dupaigne, Y. Sire},  {\em Regularity of radial extremal solutions for some non local semilinear equations}, Preprint, arXiv:1004.1906.

\bibitem{cabre} {\sc X. Cabr\'e, J. Tan}, {\em Positive solutions of nonlinear problems involving the square root of
the Laplacian}, Advances in Mathematics,  224 (2010), 2052--2093.

\bibitem{EvansGariepy} {\sc L.C. Evans, R.F. Gariepy}, {\em  Measure Theory and fine property of function}, 
 CRC Press, Boca Raton, Florida, 1992.

\bibitem{Federer} {\sc H. Federer}, {\em Geometric Measure Theory}, Springer-Verlag, New York, 1969.

\bibitem{DGNST} {\sc D. Gilbarg, N.S. Trudinger}, {\em Elliptic Partial Differential Equations of Second Order}, 
Springer-Verlag, Berlin Heidelberg, 1998.

\bibitem{Grubb} {\sc G. Grubb}, {\em Regularity of spectral fractional Dirichlet and Newmann problems}, Math. Nachr. 289 (2016) no. 7, 831--844.

\bibitem{Hua} {\sc G. Huaroto, W. Neves}, {\em Initial-boundary value problem for a fractional type degenerate heat equation}, Mathematical Models and Methods
in Applied Sciences, Vol. 28, No. 6 (2018) 1199-1231.

\bibitem{I} {\sc C. Imbert}, {\em Finite speed of propagation for a non-local porous medium equation}, Colloq. Math. 143(2) (2016), 149157.

\bibitem{Skim} {\sc S. Kim, Ki-Ahm Lee }, {\em Smooth solution for the porous medium equation in a bounded domain}, Journal of Diff. Equations, 347 (2009), 1064-1095.

\bibitem{K} {\sc S. N.,Kru\v zKov }, {\em First-order quasilinear equations in several independent variables}, Math. USSR Sb. 10 (1970), 217-243. 

\bibitem{LSU68} {\sc O.A. Ladyzhenskaya, V.A. Solonnikov, N.N. Ural'tseva}, \emph{Linear and quasilinear equations of parabolic type}, American
Mathematical Society, Providence RJ, 1968.

%\bibitem{LionsMagenes} {\sc J.L. Lions, E. Magenes}, {\em Problemes aux limites non-homogenes et application}, V.1, Dunod, Paris, 1968.

\bibitem{LMS} {\sc S. Lisini, E. Mainini, A. Segatti.}, {\em A gradient flow approach to to the porous medium equation with fractional pressure}, Arch. Rat. Mech. Anal. 227, 567-606. 

%\bibitem{jlv} {\sc J.L. Vazquez}, {\em Existence, Uniqueness and Asymptotic Behaviour for fractional Porous Medium Equation on Bounded Domain}.

\bibitem{Malek} {\sc J. M\'alek, J. Necas, M. Rokyta, M. Ruzicka}, {\em Weak and Measure-valued solutions to evolutionary PDEs}, Chapman and  Hall, London, 1996.

\bibitem{WN1} {\sc W. Neves}, {\em Scalar multidimensional conservation
laws IBVP in noncylindrical Lipschitz domains}, Journal of Diff. Equations, 
192 (2003), 360--395.

\bibitem{NPS} {\sc W. Neves, E. Panov, J. Silva}, {\em Strong Traces for Conservation Laws with General Nonautonomous Flux}
SIAM Journal on Mathematical Analysis 50 (6), 6049-6081

\bibitem{NV} {\sc Q. H. Nguyen, J. L. Vazquez}, {\em Porous medium equation with nonlocal pressure in a bounded domain}, Comm. PDEs, (2018), 1-38. ArXiv:1708.00660.

%\bibitem{WN} {\sc W. Neves, E. Panov, J. Silva}, {\em Strong Traces for Conservations Laws with general non-autonomous flux}, submitted.  

\bibitem{Otto} {\sc F. Otto} {\em Initial-boundary value problem for a
scalar conservation law}, C.R. Acad. Sci. Paris 322 (1996) 729--734.

\bibitem{STV1} {\sc D. Stan, F. del Teso, J. L. Vazquez}, {\em Finite and infinite speed of propagation for porous medium equation with fractional pressure}, C. R. Math. Acad. Sci. Paris, 352(2) (2014), 123-128.

\bibitem{STV2} {\sc D. Stan, F. del Teso, J. L. Vazquez}, {\em Finite and infinite speed of propagation for porous medium equation with nonlocal pressure}, J. Diff. Eq., 260 1154-1199, 2016.

\bibitem{STV3} {\sc D. Stan, F. del Teso, J. L. Vazquez}, {\em Existence of weak solution solution for a general proous medium equation with nonlocal pressure}, Arch. Rat. Mech. Anal., First online 09 Febrary 2019 in press arXiv: 1609.05139.

\bibitem{STV4} {\sc D. Stan, F. del Teso, J. L. Vazquez}, {\em Porous medium equation with nonlocal pressure}, arXiv:1801.04244.

\bibitem{STV5} {\sc D. Stan, F. del Teso, J. L. Vazquez}, {\em Transformations of self-similar solutions for porous medium equations of fractional type}, Nolinear Anal., 119(2015), 62-73.

%\bibitem{PRSJLT} {\sc P.R. Stinga, J.L. Torrea}, {\em Extension problem and Harnack's inequality for some fractional operators}, Communications in Partial Differential Equations, 35 11 (2010), 2092--2122.

%\bibitem{Tartar} {\sc L. Tartar}, {\em An Introduction to Sobolev Spaces and Interpolation Spaces}, Lecture Notes of the Unione Matematica Italiana, Springer.

%\bibitem{Vazquez2} {\sc J. L. V\'azquez}, {\em The Porous Medium Equation. Mathematical Theory},Oxford Mathematical Monographs. The Clarendon Press, Oxford University Press, Oxford (2007).

%\bibitem{Vazquez} {\sc J.L. V\'azquez}, {\em The Dirichlet problem for the porous medium equation in bounded domains. Asymptotic behavior}, Springer-Verlag 2004.

%\bibitem{Vazquez1} {\sc J.L. V\'azquez}, {\em Recent progress in the theory of Nonlinear Diffusion with Fractional Laplacian operator}, Discrete Contin. Dyn. Ser S7(2014) no 4, 857-885.

\bibitem{ZXC} {\sc X. Zhou, W. Xiao, J.L. Vazquez}, {\em Fractional porous medium and mean field equations in Besov space}, Electron. J. Differential Equation (2014) No.199, 14pp.


\end{thebibliography}
\end{document}